\documentclass[12pt]{article} 
\usepackage{amsfonts,amsmath,latexsym,amssymb,mathrsfs,amsthm}

\evensidemargin0cm
\oddsidemargin0cm
\textwidth16cm
\textheight23cm
\topmargin-2cm




\def\numberlikeadb{\global\def\theequation{\thesection.\arabic{equation}}}
\numberlikeadb
\newtheorem{theorem}{Theorem}[section]
\newtheorem{lemma}[theorem]{Lemma}
\newtheorem{corollary}[theorem]{Corollary}
\newtheorem{definition}[theorem]{Definition}
\newtheorem{proposition}[theorem]{Proposition}
\newtheorem{remark}[theorem]{Remark}

\numberwithin{equation}{section}

\begin{document}

\title{On Stein's method for products of normal random variables and zero bias couplings}
\author{Robert E. Gaunt\footnote{Department of Statistics,
University of Oxford, 24--29 St$.$ Giles', Oxford OX1 3LB, UK }
}

\date{March 2016} 
\maketitle

\begin{abstract}In this paper we extend Stein's method to the distribution of the product of $n$ independent mean zero normal random variables.  A Stein equation is obtained for this class of distributions, which reduces to the classical normal Stein equation in the case $n=1$.  This Stein equation motivates a generalisation of the zero bias transformation.  We establish properties of this new transformation, and illustrate how they may be used together with the Stein equation to assess distributional distances for statistics that are asymptotically distributed as the product of independent central normal random variables.  We end by proving some product normal approximation theorems.
\end{abstract}

\noindent{{\bf{Keywords:}}} Stein's method,
products of normal random variables,
zero biasing,
distributional transformation,
coupling

\noindent{{{\bf{AMS 2010 Subject Classification:}}} Primary 60F05, 60E10

\section{Introduction}In 1972, Stein \cite{stein} introduced a powerful method for deriving bounds for normal approximation.  Since then, Stein's method has been extended to many other distributions, such as the Poisson \cite{chen 0}, beta \cite{dobler1}, \cite{goldstein4}, gamma \cite{gaunt chi square}, \cite{luk}, \cite{nourdin1}, exponential \cite{chatterjee}, \cite{pekoz1}, Laplace \cite{dobler}, \cite{pike} and variance-gamma \cite{eichelsbacher}, \cite{gaunt}; for an overview see Reinert \cite{reinert 0}.  

Stein's method for normal approximation rests on the following characterization of the normal distribution, which can be found in Stein \cite{stein2}, namely $Z\sim N(0,\sigma^2)$ if and only if 
\begin{equation} \label{stein lemma}\mathbb{E}[\sigma^2f'(Z)-Zf(Z)]=0
\end{equation}
for all real-valued absolutely continuous functions $f$ such that $\mathbb{E}|f'(Z)|$ exists.  This gives rise to the following inhomogeneous differential equation, known as the Stein equation:
\begin{equation} \label{normal equation} \sigma^2f'(x)-xf(x)=h(x)-\mathbb{E}h(Z),
\end{equation}
where $Z \sim N(0,\sigma^2)$, and the test function $h$ is real-valued.  For any real-valued bounded measurable test function $h$, a solution $f$ to (\ref{normal equation}) exists.  Now, evaluating both sides at any random variable $W$ and taking expectations gives
\begin{equation} \label{expect} \mathbb{E}[\sigma^2f'(W)-Wf(W)]=\mathbb{E}h(W)-\mathbb{E}h(Z).
\end{equation}
Thus, the problem of bounding the quantity $\mathbb{E}h(W)-\mathbb{E}h(Z)$ reduces to the bounding the left-hand side of (\ref{expect}). 

\subsection{The product normal distribution}

In this paper, we extend Stein's method to products of independent central normal random variables.  The probability density function of the products of independent mean zero normal variables was shown by Springer and Thompson \cite{springer} to be a Meijer $G$-function (defined in Appendix B.1.1.).  The probability density function of the product $Z=X_1X_2\cdots X_n$ of
independent normal random variables $N(0, \sigma_{i}^2)$, $i= 1,2,..., n$, is given by
\begin{equation}\label{MeijerN} p_n(x)=\frac{1}{(2\pi)^{n/2}\sigma}G_{0,n}^{n,0}\bigg(\frac{x^2}{2^n\sigma^2} \; \bigg| \;0,\ldots,0\bigg), \quad x\in\mathbb{R},
\end{equation}
where $\sigma=\sigma_{1}\sigma_{2}\cdots\sigma_{n}$.  If (\ref{MeijerN}) holds, then we say that $Z$ has a \emph{product normal} distribution, and write $Z\sim \mathrm{PN}(n,\sigma^2)$.  For the case of two products, (\ref{MeijerN}) simplifies to
\[p_2(x)=\frac{1}{\pi\sigma_{1}\sigma_{2}}K_0\bigg(\frac{|x|}{\sigma_{1}\sigma_{2}}\bigg), \quad x\in\mathbb{R},\]
where $K_0(x)$ is a modified Bessel function of the second kind (defined in Appendix B.2.1).  

The density $p_n$ satisfies the asymptotic formulas
\begin{equation*}p_n(x)\sim\frac{1}{(2\pi)^{n/2}(n-1)!\sigma}\bigg[-\log\bigg(\frac{|x|}{\sigma}\bigg)\bigg]^{n-1}, \quad \mbox{as\:}x\rightarrow0,
\end{equation*}
and
\begin{equation}\label{pnxinfty}p_n(x)\sim \frac{2^{n/2-1}}{\sigma\sqrt{\pi n}}\bigg(\frac{|x|}{\sigma}\bigg)^{1/n-1}\exp\bigg(-\frac{n}{2}\bigg(\frac{|x|}{\sigma}\bigg)^{2/n}\bigg), \quad \mbox{as\:}|x|\rightarrow\infty
\end{equation}
(see Theorem 6 of Springer and Thompson \cite{springer}, and Luke \cite{luke}, Section 5.7, Theorem 5).  It is interesting to compare these asymptotic formulas with the exact formula for the density of the random variable $|X|^n\mathrm{sgn}(X)$, where $X\sim N(0,\sigma^2)$, as given by
\begin{equation*}p(x)=\frac{1}{n\sigma\sqrt{2\pi}}\bigg(\frac{|x|}{\sigma}\bigg)^{1/n-1}\exp\bigg(-\frac{1}{2}\bigg(\frac{|x|}{\sigma}\bigg)^{2/n}\bigg), \quad x\in\mathbb{R},
\end{equation*}
which is seen to have a greater singularity at the origin and slower decay at the tails than the $\mathrm{PN}(n,\sigma^2)$ distribution.

\subsection{A Stein equation for the product normal distribution}

One of the main results of this paper is a Stein equation for the $\mathrm{PN}(n,\sigma^2)$ distribution:
\begin{equation} \label{delta009} \frac{\sigma^2}{x}\bigg(x\frac{\mathrm{d}}{\mathrm{d}x}\bigg)^nf(x)-xf(x)=h(x)-\mathrm{PN}_n^{\sigma^2}h,
\end{equation}
where $\mathrm{PN}_n^{\sigma^2}h$ denotes the quantity $\mathbb{E}h(X)$ for $X\sim \mathrm{PN}(n,\sigma^2)$.  For ease of reading, we define the operators $Tf(x)=xf'(x)$ and $A_nf(x)=x^{-1}T^nf(x)$.  With this notation, the $\mathrm{PN}(n,\sigma^2)$ Stein equation (\ref{delta009}) can be written as
\begin{equation}\label{delta}\sigma^2A_nf(x)-xf(x)=h(x)-\mathrm{PN}_n^{\sigma^2}h.
\end{equation}
For all $n\geq1$, the operator $T^n$ satisfies the fundamental identity (Luke \cite{luke}, p$.$ 24) $T^nf(x)=\sum_{k=1}^n{n\brace k}x^kf^{(k)}(x)$, where ${n\brace k}=\frac{1}{k!}\sum_{j=0}^k(-1)^{k-j}\binom{k}{j}j^n$ are Stirling numbers of the second kind (Olver et al$.$ \cite{olver}, Chapter 26).  Consequently, (\ref{delta}) is a $n$-th order linear
differential equation with simple coefficients:
\begin{equation}\label{delta789}\sigma^2\sum_{k=1}^n{n\brace k}x^{k-1}f^{(k)}(x)-xf(x)=h(x)-\mathrm{PN}_n^{\sigma^2}h.
\end{equation}
As an $n$-th order differental equation, this Stein equation is almost unique in the literature.  The exceptions being the $n$-th order Stein equations of Goldstein and Reinert \cite{goldstein3} that involve orthogonal polynomials, and the very recent $n$-th order Stein equations for products of $n$ independent beta and gamma random variables (see Gaunt \cite{gaunt ngb}) and linear combinations of $n$ independent gamma random variables (see Arras et al$.$ \cite{aaps16}).

The $\mathrm{PN}(n,\sigma^2)$ Stein equation is a natural generalisation of the normal Stein equation (\ref{normal equation}) to one for products of independent central normal random variables and has a number of attractive properties; a further discussion is given in Remark \ref{nice eqn}.  For the case $n=2$, (\ref{delta789}) reduces to
\begin{equation}\label{deltappp}\sigma^2xf''(x)+\sigma^2f'(x)-xf(x)=h(x)-\mathrm{PN}_2^{\sigma^2}h,
\end{equation}
and in Lemma \ref{forty} we obtain the unique bounded solution of (\ref{deltappp}), as well as bounds on its first four derivatives (see Theorem \ref{compoz7}), which hold provided that $h$ and its first three derivatives are bounded.  Note that (\ref{deltappp}) is a second order linear differential equation involving $f$, $f'$ and $f''$.  Such Stein equations are uncommon in the literature, although Pek\"oz et al$.$ \cite{pekoz} have recently obtained a similar Stein equation for the family of densities
\begin{equation*}\label{kummeru}\kappa_s(x)=\Gamma(s)\sqrt{\frac{2}{s\pi}}\exp\bigg(-\frac{x^2}{2s}\bigg)U\bigg(s-1,\frac{1}{2},\frac{x^2}{2s}\bigg), \quad x>0,\: s\geq \frac{1}{2},
\end{equation*}
where $U(a,b,x)$ denotes the confluent hypergeometric function of the second kind (see Olver et al$.$ \cite{olver}, Chapter 13).  Pike and Ren \cite{pike} have also obtained a second order Stein operator for the Laplace distribution.  It should also be noted that the $\mathrm{PN}(2,\sigma^2)$ distribution is a member of the class of variance-gamma distributions, and that (\ref{deltappp}) is indeed a special case of the variance-gamma Stein equation, recently introduced by Gaunt \cite{gaunt vg}.  Very recently, Gaunt \cite{gaunt gh} obtained a second order Stein operator for the generalized hyperbolic distribution.  A limiting case of this Stein operator is the variance-gamma Stein operator. 

In Section 2.3, we solve the product normal Stein equation.  For the case of two products, we obtain uniform bounds for the solution and its first four derivatives in terms of supremum norms of derivatives of the test function $h$.  However, for $n\geq3$, the solution takes on a rather complicated form, involving integrals of Meijer $G$-functions, and we have been unable to obtain estimates for the solution and its derivatives.  This is left as an interesting open problem and is discussed further in Section 2.3.2.  

\subsection{A generalised zero bias transformation}
 
Returning to normal approximation, the quantity $\mathbb{E}[\sigma^2f'(W)-Wf(W)]$ is typically bounded through the use of a coupling and Taylor expansions.  A coupling technique that is commonly used in the case mean zero random variables is the zero bias transformation, which was introduced by Goldstein and Reinert \cite{goldstein}.  If $W$ is a mean zero random variable with finite, non-zero variance $\sigma^2$, we say that $W^*$ has the $W$-zero biased distribution if for all differentiable $f$ for which $\mathbb{E}Wf(W)$ exists,
\begin{equation*}\mathbb{E}Wf(W)=\sigma^2\mathbb{E}f'(W^*).
\end{equation*}
The above definition shows why we might like to use a zero-biasing method for normal approximation: it gives a way of splitting apart an expectation, and reduces normal approximation to bounding the quantity $\sigma^2E[f'(W)-
f'(W^*)]$.  We therefore have
\begin{equation}\label{zero eqn 1}\mathbb{E}[\sigma^2f'(W)-Wf(W)]=\sigma^2\mathbb{E}[f'(W)-f'(W^*)],
\end{equation}
and the right-hand side may be bounded by Taylor expanding about $W$.  Goldstein and Reinert \cite{goldstein} presented a number of interesting properties and obtained some useful constructions, such as:  
\begin{lemma}\label{construct2}Let $X_1,\ldots,X_n$ be independent mean zero random variables with $\mathbb{E}X_i^2=\sigma_i^2$.  Set $W=\sum_{i=1}^nX_i$ and $\mathbb{E}W^2=\sigma^2$.  Let $I$ be a random index independent
of the $X_i$ such that $\mathbb{P}(I=i)=\frac{\sigma_i^2}{\sigma^2}.$  Let
\[W_i=W-X_i=\sum_{j\not= i}X_j.\]
Then $W_I+X_I^*$ has the $W$-zero biased distribution, where $X_I^*$ has the $X_I$-zero biased distribution.
\end{lemma}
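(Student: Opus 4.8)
The plan is to verify directly that $W^* := W_I + X_I^*$ satisfies the defining equation of the $W$-zero biased distribution, namely $\mathbb{E}[Wf(W)]=\sigma^2\mathbb{E}[f'(W^*)]$ for all differentiable $f$ for which $\mathbb{E}Wf(W)$ exists. Since the zero biased distribution of a mean zero random variable with finite, non-zero variance exists and is unique, establishing this one identity suffices.

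First I would decompose the left-hand side using $W=\sum_{i=1}^nX_i$:
\[\mathbb{E}[Wf(W)]=\sum_{i=1}^n\mathbb{E}[X_if(W_i+X_i)],\]
where $W_i=\sum_{j\neq i}X_j$ is independent of $X_i$. The key step is then to apply the one-dimensional zero bias relation to each summand, conditionally on $W_i$. For a fixed value $w$ of $W_i$, the map $x\mapsto f(w+x)$ is differentiable, so by the definition of the $X_i$-zero biased distribution,
\[\mathbb{E}[X_if(w+X_i)]=\sigma_i^2\mathbb{E}[f'(w+X_i^*)],\]
where $X_i^*$ has the $X_i$-zero biased distribution. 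Taking $w=W_i$, using the independence of $X_i^*$ from $(X_j)_{j\neq i}$, and then taking expectations over $W_i$ gives $\mathbb{E}[X_if(W)]=\sigma_i^2\mathbb{E}[f'(W_i+X_i^*)]$.

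Summing over $i$ and introducing the random index $I$ then finishes the argument: since $\mathbb{P}(I=i)=\sigma_i^2/\sigma^2$ and $I$ is independent of the $X_j$ and the $X_j^*$,
\[\mathbb{E}[Wf(W)]=\sum_{i=1}^n\sigma_i^2\mathbb{E}[f'(W_i+X_i^*)]=\sigma^2\sum_{i=1}^n\mathbb{P}(I=i)\,\mathbb{E}[f'(W_i+X_i^*)]=\sigma^2\mathbb{E}[f'(W_I+X_I^*)],\]
which is exactly the required identity; note also $\sum_i\sigma_i^2=\sigma^2$ since the $X_i$ are independent.

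The main technical point to handle with care is the conditioning/interchange-of-expectation step: one needs enough integrability to justify applying the scalar zero bias identity pointwise in $W_i$ and then integrating back out. This can be arranged by first establishing the identity for bounded Lipschitz $f$, where Fubini applies cleanly given $\mathbb{E}X_i^2<\infty$, and then extending to the general class of $f$ by a standard approximation argument. A secondary point worth making explicit is the independence structure underlying the coupling — each $X_i^*$ must be constructed independent of $(X_j)_{j\neq i}$ and of the index $I$ — but once this is granted, the proof is just a reorganisation of the sum together with a single application of the defining property of the zero bias transformation, so I do not anticipate a serious obstacle beyond the bookkeeping of these independence and integrability conditions.
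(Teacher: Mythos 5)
Your proof is correct and is essentially the canonical argument for this construction. Note that the paper does not actually prove this lemma — it is quoted directly from Goldstein and Reinert \cite{goldstein} as a known result — but your derivation (decompose $\mathbb{E}[Wf(W)]=\sum_i\mathbb{E}[X_if(W_i+X_i)]$, condition on $W_i$, apply the scalar zero bias identity to each term, then average over the index $I$) is exactly the proof given in that reference, and the independence and integrability caveats you flag are the right ones to flag.
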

Such constructions combined with a Taylor expansion of the right-hand side of (\ref{zero eqn 1}) often allow simple proofs of limit theorems for normal approximation.  

Motivated by the zero bias transformation and the multivariate normal Stein equation, Goldstein and Reinert \cite{goldstein 2} extended the concept of the zero bias transformation to any finite dimension.  In this paper we introduce another generalisation of the zero bias transformation.  The product normal Stein equation and zero bias transformation motivate the following definition.
\begin{definition} \label{nthzero} Let $W$ be a mean zero random variable with finite, non-zero variance $\sigma^2$. We say that $W^{*(n)}$ has the \emph{$W$-zero biased distribution of order $n$} if for all $n$ times differentiable $f$ for which $\mathbb{E}Wf(W)$ exists,
\begin{equation}\label{islip}\mathbb{E}Wf(W)=\sigma^2\mathbb{E}A_nf(W^{*(n)}),
\end{equation}
where $A_nf(x)=x^{-1}T^nf(x)$ and $Tf(x)=xf'(x)$.
\end{definition} 
The existence of the zero biased distribution of order $n$ for any $W$ with zero mean and non-zero, finite variance is established by Lemma \ref{etna}.  The zero bias transformation of order $n$ is a natural generalisation of the zero bias transformation to the study of products of independent normal distributions in the same way that the multivariate zero bias transformation, introduced by Goldstein and Reinert \cite{goldstein 2}, is a natural extension to random vectors in $\mathbb{R}^d$. 

The zero bias transformation of order $n$ has a number of interesting properties that generalise those of the zero bias transformation.  These properties are collected in Propositions \ref{doublesquare} and \ref{john hates}.  The application of the zero bias transformation of order $n$ to assess distributional distances for statistics that have a limiting product normal distribution is analogous to that of the zero bias transformation for normal approximation.  We illustrate how such approximation results can be obtained with Theorem \ref{jazzz}, provided that we have bounds on the relevant derivatives of the solution to the Stein equation (\ref{delta}). 

In obtaining these bounds, we use the fact (Proposition \ref{john hates}, part (iv)) that the zero bias transformation of order $n$ of the random variable $W=\prod_{k=1}^nW_k$, where the $W_k$ are independent, is given by $W^{*(n)}=\prod_{k=1}^nW_k^*$.  One can then construct the zero bias transformations for each of the $W_k$ by using one of the constructions given by Goldstein and Reinert \cite{goldstein}.  For example, if $W_k=\frac{1}{\sqrt{n}}\sum_{i=1}^nX_{ik}$, where the $X_{ik}$ are independent random variables with mean zero and non-zero, finite variance, then we can construct $W_k^*$ using Lemma \ref{construct2}. 

It is worth comparing this construction with an analogous result, due to Luk \cite{luk}, involving size bias couplings (for an application of this coupling to normal approximation see Baldi, Rinott and Stein \cite{baldi}).  If $W\geq0$ has mean $\mu>0$, we say $W^s$ has the $W$-size biased distribution if for all $f$ such that $\mathbb{E}Wf(W)$ exists,
\[\mathbb{E}Wf(W)=\mu\mathbb{E}f(W^s).\] 
Now, if $W=\prod_{k=1}^nW_k$, where the $W_k$ are positive, independent random variables and $W_1^s,\ldots,W_n^s$ are independent random variables with $W_k^s$ having the $W_k$-size biased distribution, then $W^s=\prod_{k=1}^nW_k^s$ has the $W$-size biased distribution.  The constructions are similar, although the product of $n$ zero bias distributions has the $W$-zero bias distribution of order $n$, rather than the $W$-zero bias distribution.

In Section 4, we illustrate how the product normal Stein equation may be used together with the zero bias transformation of order $n$ to assess distributional distances for statistics that are asymptotically distributed as products of independent normal random variables, and we obtain explicit bounds on the convergence rate in the $n=2$ case.  We are restricted to the case $n=2$ because we are unable to obtain bounds for the solution of $\mathrm{PN}(n,\sigma^2)$ Stein equation for $n\geq3$.  In Section 5, we get around this difficulty by using a recent results, due to Gaunt \cite{gaunt}, that allow one to obtain bounds on distributional distances when the limit distribution that be represented as a function of a multivariate normal random variable, of which the product normal distribution is an obvious example.  We end Section 5 by comparing the bounds obtained by this approach that bypasses the Stein equation and those derived in Section 4 using the Stein equation.

The approach taken in this paper is somewhat classical, although it should be noted that product normal limit theorems have recently been established in the context of Malliavin calculus.  Indeed, the Malliavin-Stein method (see Nourdin and Peccati \cite{np12} for an introduction), that was originally developed for Gaussian approximation, is applicable to a wide class of laws; see, for example, Eden and Viens \cite {eden1} and Eden and Viquez \cite{eden2}.  Eichelsbacher and Th\"{a}le \cite{eichelsbacher}  used the Malliavin-Stein approach  to obtain bounds for variance-gamma approximation of functionals of isonormal Guassian processes.  In particular, they showed that a sequence of random variables in the second Wiener chaos converges to a variance-gamma limit if and only if their second to sixth moments converge to those a variance-gamma random variable.  A special case of this general six moment theorem is a six moment theorem for the distribution of the product of two (possibly correlated) standard normal variables (see \cite{eichelsbacher}, Theorem 5.8 and Remark 5.9), with explicit bounds on the rate of convergence.

The results of \cite{eichelsbacher} are complemented by those of Azmooden et al$.$ \cite{azmooden}.  They established necessary and sufficient conditions under which a sequence of random variables living inside a Wiener chaos of arbitrary order converge to limiting random variables, whose distribution is of the form $\sum_{i=1}^k\alpha_i(Z_i^2-1)$, where $k$ is a finite integer, the $\alpha_i$, $i=1,\ldots,k$, are piecewise distinct and the $Z_i$ are independent $N(0,1)$ variables.  The case $k=2$, $\alpha_1=\frac{1}{2}=-\alpha_2$ corresponds to a limit with the same distribution as a $\mathrm{PN}(2,1)$ random variable, and other parameter values can yield results for limiting distributions that fall outside the variance-gamma class. 

\subsection{Outline of the paper}

We begin Section 2 by obtaining some useful properties for the operator $A_n$.  We use some of these properties to prove the existence and uniqueness of the zero bias distribution of order $n$.  We then present a characterising equation for the product normal distribution which motivates the Stein equation (\ref{delta}).  Also, for the case $n=2$ we obtain the unique bounded solution, as well as bounds on its first four derivatives.  The general case $n\geq3$ is more difficult, but we are still able to solve the equation.  We also discuss how one may extend the approach used in the $n=2$ case to this more challenging case.  In this section, we also consider an application of our product normal characterisation to the problem of obtaining a formula for the characteristic function of a product normal distributed random variable.  In Section 3, we present some of the properties of the zero bias distribution of order $n$.  In Section 4, we illustrate how the product normal Stein equation may be used together with the zero bias transformation to derive product normal approximation results.  In Section 5, we bypass the product normal Stein equation to prove two product normal approximations for general $n$.  The proof of Theorem \ref{compoz7} is given in Appendix A.  Appendix B provides a list of some elementary properties of the Meijer $G$-function and modified Bessel functions that are used in this paper.  Appendix C provides a list of inequalities for expressions involving derivatives and integrals of modified Bessel functions that are used to bound the derivatives of the solution of the $\mathrm{PN}(2,\sigma^2)$ Stein equation.   

\vspace{3mm}

\emph{Notation.} Throughout this paper, $T$ will denote the operator $Tf(x)=xf'(x)$ and $A_n$ will denote the operator $A_nf(x)=x^{-1}T^nf(x)=\frac{\mathrm{d}}{\mathrm{d}x}(T^{n-1}f(x))$.  We shall write $C^n(\mathbb{R}^d)$ for the space of $k$ times differentiable functions on $\mathbb{R}^d$.  We let $C_b^n(\mathbb{R}^d)$ denote the space of bounded functions on $\mathbb{R}^d$ with bounded $k$-th order derivatives for $k\leq n$.  We shall also write $\|f\|=\|f\|_{\infty}=\sup_{x\in\mathbb{R}}|f(x)|$.

\section{A Stein equation for products of independent central normal variables}

\subsection{The product normal Stein equation}

In this section we obtain a characterising equation for the product normal distribution which motivates the $\mathrm{PN}(n,\sigma^2)$ Stein equation (\ref{delta}). Before presenting that result, we present some useful properties of the operator $A_nf(x)=x^{-1}T^nf(x)$ and establish the existence of the zero bias distribution of order $n$ for any $W$ with zero mean and finite, non-zero variance.  

We begin by obtaining an inverse of the operator $A_n$.  This inverse operator will be used throughout this paper in establishing properties of the zero bias distribution of order $n$. 

\begin{lemma}\label{invlem}Let $V_n$ be the product of $n$ independent $U(0,1)$ random variables, and define the operator $G_n$ by $G_nf(x)=x\mathbb{E}f(xV_n)$.  Then, $G_n$ is the right-inverse of the operator $A_n$ in the sense that
\begin{equation*}A_nG_nf(x)=f(x).
\end{equation*}
Suppose now that $f\in C^n(\mathbb{R})$.  Then, for any $n\geq 1$,
\begin{equation}\label{leftinv}G_nA_nf(x)=G_1A_1f(x)=f(x)-f(0).
\end{equation}
Therefore, $G_n$ is the inverse of $A_n$ when the domain of $A_n$ is the space of all $n$ times differentiable functions $f$ on $\mathbb{R}$ with $f(0)=0$.
\end{lemma}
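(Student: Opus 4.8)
The plan is to factor both $G_n$ and $A_n$ through a single auxiliary averaging operator and then let a short list of commutation identities do the work. Write $D=\frac{\mathrm{d}}{\mathrm{d}x}$, let $M_xf(x)=xf(x)$, and introduce $Sf(x)=\mathbb{E}[f(xU)]=\int_0^1 f(xu)\,\mathrm{d}u$ with $U\sim U(0,1)$. Since $V_n$ has the law of a product of $n$ independent $U(0,1)$ variables, Fubini gives $\mathbb{E}[f(xV_n)]=S^nf(x)$, so $G_n=M_xS^n$; and differentiation under the integral shows that $S$ maps $C^k(\mathbb{R})$ into $C^k(\mathbb{R})$ for every $k$. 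The identities I would record are: (i) $S(I+T)=I$ and $(I+T)S=I$ on $C^1(\mathbb{R})$, both being the single computation $f(xu)+xu\,f'(xu)=\frac{\mathrm{d}}{\mathrm{d}u}[u\,f(xu)]$ integrated over $u\in[0,1]$, the right side telescoping to $f(x)$; and (ii) $TM_x=M_x(I+T)$ and $DT=(I+T)D$, both immediate from the product rule. Iterating (ii) and using the definition $A_n=DT^{n-1}$ recorded in the notation section gives the two factorisations
\[
A_n=(I+T)^{n-1}D,\qquad A_nM_x=x^{-1}T^nM_x=x^{-1}M_x(I+T)^n=(I+T)^n.
\]

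For the right inverse, compute
\[
A_nG_nf=A_nM_xS^nf=(I+T)^nS^nf=(I+T)^{n-1}\big[(I+T)S\big](S^{n-1}f)=(I+T)^{n-1}S^{n-1}f,
\]
and repeat: peeling off one factor $(I+T)S=I$ at a time, each applied to $S^{n-k}f\in C^1(\mathbb{R})$ (legitimate because $S$ preserves smoothness), collapses $(I+T)^nS^n$ to the identity, so $A_nG_nf=f$.

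For the left inverse, take $f\in C^n(\mathbb{R})$, so $f'\in C^{n-1}(\mathbb{R})$; then
\[
G_nA_nf=M_xS^n(I+T)^{n-1}Df=M_x\big[S^n(I+T)^{n-1}\big]f',
\]
and peeling $n-1$ times with $S(I+T)=I$ (the $k$-th step applied to $(I+T)^{n-1-k}f'\in C^k(\mathbb{R})\subseteq C^1(\mathbb{R})$) reduces $S^n(I+T)^{n-1}$ to $S$, whence
\[
G_nA_nf=M_xSf'(x)=x\int_0^1 f'(xu)\,\mathrm{d}u=\int_0^x f'(t)\,\mathrm{d}t=f(x)-f(0)
\]
by the fundamental theorem of calculus; the case $n=1$ of this chain is precisely $G_1A_1f=f-f(0)$, so the three quantities coincide. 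Finally, if additionally $f(0)=0$ then $G_nA_nf=f$; since $A_nG_n=I$ in general and $G_n$ maps into $\{g:g(0)=0\}$ (because $G_ng(0)=0$), the operators $A_n$ and $G_n$ are mutually inverse on the space of $n$ times differentiable functions on $\mathbb{R}$ that vanish at the origin.

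The only genuinely substantive step is the first one — recognising the factorisation $G_n=M_xS^n$ together with the mutual-inverse relation $S(I+T)=(I+T)S=I$ — after which the argument is purely formal. Everything else is regularity bookkeeping: justifying Fubini and differentiation under the integral (harmless, since $V_n$ is supported on $[0,1]$ and $f\in C^n(\mathbb{R})$), checking that $S$ does not lower smoothness so that the successive cancellations are valid, and noting that the boundary term $u\,f(xu)\big|_{u=0}$ vanishes (equivalently, $T^kf(0)=0$ for $k\geq1$). The one place to be careful is the smoothness of $G_nf$: for merely continuous $f$, $G_nf$ need not lie in $C^n(\mathbb{R})$, so $A_nG_nf=f$ should be read as holding for $f$ regular enough that $G_nf\in C^n(\mathbb{R})$ — in particular for $f\in C^n(\mathbb{R})$, which covers all the uses made of the lemma later.
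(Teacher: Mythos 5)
Your proof is correct and takes a genuinely different route from the paper. The paper changes variables to write $G_nf$ as an explicit $n$-fold nested integral, derives the recursion $TG_nf=G_{n-1}f$ by differentiating that formula directly, and proves the left-inverse claim by performing the innermost integration in the nested representation and iterating $G_nA_nf=G_{n-1}A_{n-1}f$. You never touch the nested integral: you factor $G_n=M_xS^n$ through the single averaging operator $S$ and reduce everything to the mutual-inverse relation $S(I+T)=(I+T)S=I$ together with the commutations $TM_x=M_x(I+T)$ and $DT=(I+T)D$, which give the tidy identities $A_n=(I+T)^{n-1}D$ and $A_nM_x=(I+T)^n$. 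The engine is the same --- a single telescoping cancellation iterated $n$ times, and your $(I+T)S=I$ applied inside $G_n=M_xS^n$ literally reproduces the paper's $TG_n=G_{n-1}$ --- but the operator-algebraic packaging is cleaner, more modular, and makes visible structure that the paper's computation with iterated integrals leaves implicit. Your closing caveat on the regularity needed for $A_nG_nf=f$ is sensible but slightly over-cautious: each step of the right-inverse peel requires only one derivative of a quantity of the form $Sg$ (not $n$ derivatives of $S^nf$ all at once), and for $x\neq 0$ the formula $x(Sg)'(x)=g(x)-Sg(x)$ shows $TSg$ makes sense for merely continuous $g$, with its value at $0$ obtained by continuous extension --- which is exactly what the paper's explicit integration does implicitly. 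In any case, the $f\in C^n(\mathbb{R})$ reading you adopt covers every application the paper makes of the lemma.
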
 

\begin{proof}We begin by obtaining a useful formula for $G_nf(x)=x\mathbb{E}f(xV_n)$.  We have that
\begin{equation*}G_nf(x)=x\int_{(0,1)^n}f(xu_1\cdots u_n)\,\mathrm{d}u_1\cdots \mathrm{d}u_n,
\end{equation*}
and by a change of variables we can write
\begin{equation}\label{g1gg}G_1f(x)=\int_0^xf(t_1)\,\mathrm{d}t_1,
\end{equation}
and for $n \geq 2$,
\begin{equation}\label{g2gg}G_nf(x)=\int_0^x\!\int_0^{t_n}\dotsi\int_0^{t_2}\frac{1}{t_2t_3\cdots t_n}f(t_1)\,\mathrm{d}t_1\,\mathrm{d}t_2\cdots \mathrm{d}t_n.
\end{equation}

We now show that $G_n$ is the right-inverse of the operator $A_n$.  It is immediate from (\ref{g1gg}) that $A_1G_1f(x)=f(x)$.  For $n\geq 2$, we differentiate the right-hand side of (\ref{g2gg}) to obtain
\[TG_nf(x)=x\frac{\mathrm{d}}{\mathrm{d}x}(G_nf(x))=G_{n-1}f(x),\]
and hence $T^kG_nf(x)=G_{n-k}f(x)$.  Using this recursive formula yields
\[A_nG_nf(x)=x^{-1}T^nG_nf(x)=\frac{\mathrm{d}}{\mathrm{d}x}(T^{n-1}G_nf(x))=\frac{\mathrm{d}}{\mathrm{d}x}(G_1f(x))=f(x).\] 

Finally, we verify relation (\ref{leftinv}).  For $n\geq 2$ we have
\begin{align*}G_nA_nf(x)&=\int_0^x\!\int_0^{t_n}\dotsi\int_0^{t_2}\frac{1}{t_2\cdots  t_n}\frac{\mathrm{d}}{\mathrm{d}t_1}(T^{n-1}f(t_1))\,\mathrm{d}t_1\,\mathrm{d}t_2\cdots \mathrm{d}t_n \\
&=\int_0^x\!\int_0^{t_n}\dotsi\int_0^{t_3}\frac{1}{t_2 t_3\cdots t_n}\Big[T^{n-1}f(t_1)\Big]_{t_1=0}^{t_1=t_2}\,\mathrm{d}t_2\,\mathrm{d}t_3\cdots \mathrm{d}t_n \\
&=\int_0^x\!\int_0^{t_n}\dotsi\int_0^{t_3}\frac{1}{t_3\cdots  t_n}\cdot t_2^{-1}T^{n-1}f(t_2)\,\mathrm{d}t_2\,\mathrm{d}t_3\cdots \mathrm{d}t_n \\
&=G_{n-1}A_{n-1}f(x),
\end{align*} 
where we used that $A_nf(x)=x^{-1}T^nf(x)=\frac{\mathrm{d}}{\mathrm{d}x}(T^{n-1}f(x))$.  Iterating gives
\[G_nA_nf(x)=G_1A_1f(x)=\int_0^xf'(t_1)\,\mathrm{d}t_1=f(x)-f(0),\]
as required.
\end{proof} 

We are now in a position to prove the existence and uniqueness of the zero bias distribution of order $n$ for any $W$ with zero mean and finite, non-zero variance.  The proof is a generalisation of the proof of existence of the zero bias distribution that was given in Goldstein and Reinert \cite{goldstein}.

\begin{lemma} \label{etna} Let $W$ be a mean zero random variable with finite, non-zero variance $\sigma^2$.  Then there exists a unique random variable $W^{*(n)}$ such that for all $f\in C^n(\mathbb{R})$ for which the relevant expectations exist we have
\[\mathbb{E}Wf(W)=\sigma^2\mathbb{E}A_nf(W^{*(n)}).
\]
\end{lemma}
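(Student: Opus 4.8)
The plan is to write down the law of $W^{*(n)}$ explicitly and then verify \eqref{islip} by inverting the operator $A_n$ via the operator $G_n$ of Lemma \ref{invlem}. First, since $A_nf(x)=\sum_{k=1}^{n}{n\brace k}x^{k-1}f^{(k)}(x)$ involves $f$ only through its derivatives and $\mathbb{E}W=0$, the identity $\mathbb{E}Wf(W)=\sigma^2\mathbb{E}A_nf(W^{*(n)})$ is unchanged under $f\mapsto f+c$; hence it suffices to treat $f\in C^n(\mathbb{R})$ with $f(0)=0$, and for such $f$ Lemma \ref{invlem} gives $f=G_n(A_nf)$. Writing $g=A_nf$, the problem reduces to producing a probability measure $\mu_n$ with $\sigma^2\int g\,\mathrm{d}\mu_n=\mathbb{E}[W\,G_ng(W)]$ for all $g$ arising this way.

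The key observation is that, since $G_ng(x)=x\,\mathbb{E}g(xV_n)$ where $V_n$ is a product of $n$ independent $U(0,1)$ random variables, taking $V_n$ independent of $W$ gives $\mathbb{E}[W\,G_ng(W)]=\mathbb{E}[W^2g(WV_n)]$ (a Fubini argument, valid once the relevant expectation is absolutely convergent). This is the fact alluded to just before the lemma and reused in the proof of Proposition \ref{prodsteinlemma}, and it identifies the candidate: let $W^{*(n)}$ be the random variable whose law is
\[
\mathbb{E}\phi(W^{*(n)})=\frac{1}{\sigma^2}\mathbb{E}\big[W^2\phi(WV_n)\big],\qquad V_n\perp W,
\]
which is a genuine probability distribution (taking $\phi\equiv1$ gives total mass $\sigma^{-2}\mathbb{E}W^2=1$, and positivity is clear); equivalently $W^{*(n)}\stackrel{d}{=}W^{*}V_{n-1}$, with $W^{*}$ the ordinary zero-biased variable and $V_{n-1}$ an independent product of $n-1$ uniforms. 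Then for $f\in C^n(\mathbb{R})$ with $f(0)=0$,
\[
\sigma^2\mathbb{E}A_nf(W^{*(n)})=\mathbb{E}\big[W^2(A_nf)(WV_n)\big]=\mathbb{E}\big[W\,G_n(A_nf)(W)\big]=\mathbb{E}\big[W(f(W)-f(0))\big]=\mathbb{E}Wf(W),
\]
using the key observation and then $G_nA_nf=f-f(0)$ from Lemma \ref{invlem} together with $\mathbb{E}W=0$; dropping the normalisation $f(0)=0$ gives \eqref{islip} for all admissible $f$. (For bounded test functions every expectation in sight is finite because $\mathbb{E}W^2<\infty$.)

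For uniqueness, if $\tilde{W}$ also satisfies \eqref{islip} then, since $f=G_n g$ with $f(0)=0$, one gets $\sigma^2\mathbb{E}g(\tilde{W})=\mathbb{E}Wf(W)=\mathbb{E}[W\,G_ng(W)]$ for every $g$ of the form $A_nf$ with $f\in C^n(\mathbb{R})$, $f(0)=0$; hence $\mathbb{E}g(\tilde{W})$ takes the same value for any two such variables, and it remains to check this family of test functions is measure-determining. For $\phi\in C_c^{\infty}(\mathbb{R})$ one has $G_n\big(x^{n-1}\phi(x)\big)=x^{n}\,\mathbb{E}\big[V_n^{n-1}\phi(xV_n)\big]$, which is $C^{\infty}$ (differentiate under the integral, using that $V_n^{k}$ stays bounded on $(0,1)^n$) and vanishes at $0$, so $x^{n-1}\phi(x)=A_nG_n\big(x^{n-1}\phi(x)\big)$ lies in the family for every $\phi\in C_c^{\infty}(\mathbb{R})$; writing an arbitrary $\psi\in C_c^{\infty}(\mathbb{R}\setminus\{0\})$ as $x^{n-1}\cdot\big(x^{-(n-1)}\psi(x)\big)$ shows the family contains all of $C_c^{\infty}(\mathbb{R}\setminus\{0\})$, which determines the law on $\mathbb{R}\setminus\{0\}$, and the atom at $0$ is then forced by total mass one. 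I expect this last point to be the main obstacle: the class $\{A_nf:f\in C^n(\mathbb{R})\}$ is strictly smaller than $C_c^{\infty}(\mathbb{R})$, and some care is needed to be sure that what one does get still separates probability measures — in particular the regularity bookkeeping for $G_ng$ near the origin, where genuine singularities appear unless $g$ vanishes to sufficient order, as the computation for $G_2$ already indicates.
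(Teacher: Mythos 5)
Your proof is correct, and it rests on the same fundamental idea as the paper's: using the operator $G_n$ from Lemma \ref{invlem}, the identity $G_nA_nf = f - f(0)$, and the representation $G_nf(x) = x\,\mathbb{E}f(xV_n)$. Where you diverge is in packaging. The paper sets up $R f = \sigma^{-2}\mathbb{E}[W\,G_nf(W)]$ as a positive linear functional on $C_c$ (proving positivity via the observation that $G_nf$ is increasing when $f\geq 0$, so $W$ and $G_nf(W)$ are positively correlated) and invokes the Riesz representation theorem to conjure a probability measure. You instead notice that $R f = \sigma^{-2}\mathbb{E}[W^2 f(WV_n)]$ already exhibits the measure explicitly as the $W^2$-reweighted law of $WV_n$, which makes positivity and total mass one immediate and bypasses Riesz entirely; in effect you are anticipating Proposition \ref{doublesquare} at the existence stage. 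This is a genuine simplification. You are also more careful than the paper on uniqueness: the paper's Riesz argument gives uniqueness of the measure representing $R$ on all of $C_c$, but leaves implicit that the characterising identity over the smaller class $\{A_ng : g\in C^n\}$ pins down the same measure. Your argument via $G_n(x^{n-1}\phi)$ showing that $C_c^\infty(\mathbb{R}\setminus\{0\})$ sits in the range, with the atom at $0$ then forced by total mass, fills exactly this gap; the concern you flag at the end is warranted but your resolution of it is sound.
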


\begin{proof}For $f\in C_c$, the collection of continuous functions with compact support, define a linear operator $R$ by
\[Rf=\sigma^{-2}\mathbb{E}WG_nf(W),\]
where $G_n$ is defined as in Lemma \ref{invlem}.  Then $Rf$ exists, since $\mathbb{E}W^2<\infty$.  To see, moreover, that $R$ is positive, take $f\geq 0$.  Then $G_nf(x)$ is increasing, and therefore $W$ and $G_nf(W)$ are positively correlated.  Hence $\mathbb{E}WG_nf(W)\geq\mathbb{E}W\mathbb{E}fG_n(W)=0$, and $R$ is positive.  Using the Riesz representation theorem (see, for example, Folland \cite{folland}) we have $Rf=\int f\,\mathrm{d}\nu$, for some unique Radon measure $\nu$, which is a probability measure as $R1=1$.  

We now take $f(x)=A_ng(x),$ where $g\in C^n(\mathbb{R})$, with derivatives up to $n$-th order being continuous with compact support.  Then, from (\ref{leftinv}), we have
\[\mathbb{E}WG_nA_ng(W)=\mathbb{E}W(g(W)-g(0))=\mathbb{E}Wg(W),\]
which completes the proof.
\end{proof}

The following proposition motivates the product normal Stein equation (\ref{delta}).

\begin{proposition}\label{prodsteinlemma}Suppose $Z\sim\mathrm{PN}(n,\sigma^2)$.  Let $f\in C^n(\mathbb{R})$ be such that $\mathbb{E}|Zf(Z)|<\infty$ and $\mathbb{E}|Z^{k-1}f^{(k)}(Z)|<\infty$, $k=1,\ldots,n$.  Then
\begin{equation}\label{cracker} \mathbb{E}[\sigma^2A_nf(Z)-Zf(Z)]=0.
\end{equation}
\end{proposition}

\begin{proof}Define $W_n=\prod_{i=1}^nX_i$ where $X_i\sim N(0,\sigma_{x_i}^2)$ and the $X_i$ are independent.  We also let $\sigma_n=\sigma_{x_1}\sigma_{x_2}\cdots\sigma_{x_n}$ and observe that $(T^nf)(ax)=T^nf_{a}(x)$ where $f_{a}(x)=f(ax)$.  

We prove the result by induction on $n$.  For $n=1$, the result follows immediately from the characterisation (\ref{stein lemma}) for the normal distribution.  By induction assume that $\mathbb{E}W_ng(W_n)=\sigma_n^2\mathbb{E}A_ng(W_n)=\sigma_{n}^2\mathbb{E} W_n^{-1}T^ng(W_n)$ for all $g\in C^n(\mathbb{R})$ for some $n\geq 1$.  Then 
\begin{align*}
\mathbb{E}W_{n+1}f(W_{n+1}) &=\mathbb{E}[X_{n+1}\mathbb{E}[W_n f_{X_{n+1}}(W_n)\mid X_{n+1}]]\\
&= \mathbb{E}[X_{n+1}\mathbb{E}[\sigma_{n}^2W_{n}^{-1}(T^nf_{X_{n+1}})(W_n)\mid X_{n+1}]]\\
&= \sigma_{n}^2\mathbb{E}[X_{n+1}W_{n}^{-1}(T^nf)(W_nX_{n+1})]\\
&= \sigma_{n}^2\mathbb{E}[W_{n}^{-1}\mathbb{E}[X_{n+1}\cdot (T^nf_{W_n})(X_{n+1})\mid W_n]]\\
&= \sigma_{n+1}^2\mathbb{E}[W_{n}^{-1}\mathbb{E}[X_{n+1}^{-1}(T^{n+1}f_{W_n})(X_{n+1})\mid W_n]]\\
&= \sigma_{n+1}^2\mathbb{E}[W_{n+1}^{-1}T^{n+1}f(W_{n+1})],
\end{align*} 
as required.
\end{proof}

\begin{remark}\label{nice eqn}\emph{We could have obtained a first order Stein operator for the $\mathrm{PN}(n,\sigma^2)$ distributions using the density approach of Stein et al$.$ \cite{stein3} (see also Ley et al$.$ \cite{ley} for an extension of the scope of the density method).  However, this approach would lead to complicated operators involving Meijer $G$-functions, which, in contrast to our Stein equation, may not be amenable to the use of couplings.} 
\end{remark}

\subsection{Applications of Proposition \ref{prodsteinlemma}}

The main application of Proposition \ref{prodsteinlemma} that is considered in this paper involves the use of
the resulting $\mathrm{PN}(2,\sigma^2)$ Stein equation in the proofs of the limit theorems of Section 4. It is,
however, possible to obtain other interesting results using Proposition \ref{prodsteinlemma}.  As an example, we demonstrate how the characterising equation (\ref{cracker}) of the product normal distributions can be used to derive a formula for the characteristic function of the $\mathrm{PN}(n,\sigma^2)$ distribution.

Let $W_n\sim\mathrm{PN}(n,\sigma^2)$.  We begin by noting that the moment generating function of $W_n$ is only defined for $n\leq 2$ (see (\ref{pnxinfty})).  We therefore consider the characteristic function of $W_n$.  On taking $f(x)=\mathrm{e}^{itx}$ in the characterising equation (\ref{cracker}) and setting $\phi_n(t)=\mathbb{E}[\mathrm{e}^{itW_n}]$, we deduce that $\phi_n(t)$ satisfies the differential equation
\begin{equation}\label{charmei}\sigma^2t\bigg(t\frac{\mathrm{d}}{\mathrm{d}t}+1\bigg)^{n-1}\phi_n(t)+\phi_n'(t)=0.
\end{equation}
Solving this differential equation subject to the conditions that $\phi_n(t)$ is the characteristic function of $W_n$ would yield a formula for the characteristic function of $W_n$.  For simplicity, we consider the cases $n=2,3$ (the case $n=1$ gives the well-known formula $\phi_1(t)=\mathrm{e}^{-\frac{1}{2}\sigma^2t^2}$).  

For $n=2$, the differential equation (\ref{charmei}) reduces to
\begin{equation*}(1+\sigma^2t^2)\phi_2'(t)+\sigma^2t\phi_2(t)=0.
\end{equation*}  
Solving this equation subject to the condition $\phi_2(0)=1$ gives
\[\phi_2(t)=\frac{1}{\sqrt{1+\sigma^2t^2}}.\]
This formula was also obtained in Example 11.22 of Stuart and Ord \cite{stuart}.

For $n=3$, the characteristic function $\phi_3(t)$ satisfies
\begin{equation*}\sigma^2t^3\phi_3''(t)+(3\sigma^2t^2+1)\phi_3'(t)+\sigma^2t\phi_3(t)=0.
\end{equation*}
It is straightforward to verify the general solution of this differential equation is given by
\begin{equation*}\phi_3(t)=A|t|^{-1}\exp\bigg(\frac{1}{4\sigma^2t^2}\bigg)I_0\bigg(\frac{1}{4\sigma^2t^2}\bigg)+B|t|^{-1}\exp\bigg(\frac{1}{4\sigma^2t^2}\bigg)K_0\bigg(\frac{1}{4\sigma^2t^2}\bigg),
\end{equation*}
where $I_0(x)$ and $K_0(x)$ are modified Bessel functions (see Appendix B.2.1 for definitions), which satisfy the modified Bessel differential equation (\ref{realfeel}).  From the asymptotic formula $I_0(x)\sim\frac{\mathrm{e}^x}{\sqrt{2\pi x}}$ as $x\rightarrow\infty$ (see (\ref{roots})), it follows that the solution is unbounded as $t\rightarrow0$ unless $A=0$.  Since the characteristic function $\phi_3(t)$ must be bounded for all $t\in\mathbb{R}$, we take $A=0$.  We also require that $\phi_3(0)=1$, and so on applying the asymptotic formula $K_0(x)\sim\sqrt{\frac{\pi}{2x}}\mathrm{e}^{-x}$ as $x\rightarrow\infty$ (see (\ref{Ktendinfinity})), we take $B=(\sigma\sqrt{2\pi})^{-1}$.  Hence,
\[\phi_3(t)=\frac{1}{\sqrt{2\pi\sigma^2t^2}}\exp\bigg(\frac{1}{4\sigma^2t^2}\bigg)K_0\bigg(\frac{1}{4\sigma^2t^2}\bigg).\]

In principle, this approach could be used to obtain a formula for $\phi_n(t)$ for any $n\geq1$.  Although, for $n\geq 4$, the general solution is given in terms of Meijer $G$-functions and it would become increasing difficult to use the condition that $\phi_n(t)$ is the characteristic function of the $\mathrm{PN}(n,\sigma^2)$ distribution to determine the values of the constants.  We can, however, obtain a formula for $\phi_n(t)$ by using an integral formula involving the Meijer $G$-function, given by formula 18 of Section 5.6 of Luke \cite{luke} (see \ref{meijergintegration}).  As far as the author is aware, the formula in the following proposition is new. 

\begin{proposition}\label{propcharmeijer}The characteristic function of the $\mathrm{PN}(n,\sigma^2)$ distribution is given by
\begin{equation*}\phi_n(t)=\frac{1}{\pi^{(n-1)/2}}G_{1,n-1}^{n-1,1}\bigg(\frac{1}{2^{n-2}\sigma^2t^2}\;\bigg|\;\begin{matrix} 1 \\
\frac{1}{2},\ldots,\frac{1}{2} \end{matrix} \bigg).
\end{equation*} 
In the cases $n=2,3$, this formula simplifies to
\begin{equation*}\phi_2(t)=\frac{1}{\sqrt{1+\sigma^2t^2}} \qquad \text{and} \qquad \phi_3(t)=\frac{1}{\sqrt{2\pi\sigma^2t^2}}\exp\bigg(\frac{1}{4\sigma^2t^2}\bigg)K_0\bigg(\frac{1}{4\sigma^2t^2}\bigg).
\end{equation*}
\end{proposition}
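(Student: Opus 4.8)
\emph{Proof proposal.} The plan is to evaluate $\phi_n(t)=\mathbb{E}[\mathrm{e}^{itW_n}]=\int_{\mathbb{R}}\mathrm{e}^{itx}p_n(x)\,\mathrm{d}x$ directly from the density formula (\ref{MeijerN}), using the Meijer $G$-function integration formula (\ref{meijergintegration}). Since $p_n$ is an even function, the sine part of $\mathrm{e}^{itx}$ integrates to zero, so
\[\phi_n(t)=2\int_0^{\infty}\cos(tx)\,p_n(x)\,\mathrm{d}x.\]
The first step is to write the integrand as a product of two Meijer $G$-functions of $x^2$: by (\ref{MeijerN}) and the classical representation $\cos(z)=\sqrt{\pi}\,G_{0,2}^{1,0}\big(\frac{z^2}{4}\,\big|\,0,\frac12\big)$, and then substituting $y=x^2$, the integral becomes, up to the constant $\sqrt{\pi}\,(2\pi)^{-n/2}\sigma^{-1}$,
\[\int_0^{\infty}y^{-1/2}\,G_{0,n}^{n,0}\bigg(\frac{y}{2^n\sigma^2}\,\bigg|\,0,\dots,0\bigg)\,G_{0,2}^{1,0}\bigg(\frac{t^2y}{4}\,\bigg|\,0,\tfrac12\bigg)\,\mathrm{d}y,\]
which is precisely the shape covered by formula 18 of Section 5.6 of Luke \cite{luke}, i.e.\ (\ref{meijergintegration}), with the exponent $\lambda=\tfrac12$.

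The second step is to apply (\ref{meijergintegration}). It evaluates the integral as a single Meijer $G$-function whose argument is the ratio of the two scaling constants, $\big(\tfrac{1}{2^n\sigma^2}\big)/\big(\tfrac{t^2}{4}\big)=\frac{1}{2^{n-2}\sigma^2t^2}$ --- exactly the argument in the statement --- multiplied by a power of $\tfrac{t^2}{4}$ and by $\Gamma$-factors dictated by the formula, with parameter lists obtained from $\{0,\dots,0\}$, $\{0,\tfrac12\}$ and $\lambda$ by the prescribed shifts. Using the elementary identity $z^{\alpha}G_{p,q}^{m,n}(z\,|\,\cdot)=G_{p,q}^{m,n}(z\,|\,\cdot\text{ shifted by }\alpha)$ to absorb the power of $z$, and cancelling a repeated upper/lower parameter to pass from the $G_{2,n}$ produced by the formula down to a $G_{1,n-1}$, one is left with $G_{1,n-1}^{n-1,1}$ having upper parameter $1$ and lower parameters $\tfrac12,\dots,\tfrac12$; collecting the remaining powers of $2$, $\pi$ and $\sigma$ then reduces the overall constant to $\pi^{-(n-1)/2}$. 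Before invoking (\ref{meijergintegration}) one must verify its hypotheses: absolute convergence of the $y$-integral, which holds since $y^{-1/2}p_n$ is integrable near $0$ (the singularity of $p_n$ at the origin being only logarithmic) and since $p_n$ has the exponential tail decay (\ref{pnxinfty}); and the inequalities among the parameters and $\lambda$ required by Luke, which are immediate for this very simple parameter constellation.

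For the special cases one reduces the Meijer $G$-function to familiar functions. For $n=2$, $G_{1,1}^{1,1}\big(z\,\big|\,\begin{smallmatrix}1\\1/2\end{smallmatrix}\big)=\Gamma(\tfrac12)\,z^{1/2}(1+z)^{-1/2}$, and with $z=\frac{1}{\sigma^2t^2}$ this gives $\phi_2(t)=(1+\sigma^2t^2)^{-1/2}$. For $n=3$, putting $z=\frac{1}{2\sigma^2t^2}$ and using $G_{1,2}^{2,1}\big(z\,\big|\,\begin{smallmatrix}1\\1/2,1/2\end{smallmatrix}\big)=z^{1/2}G_{1,2}^{2,1}\big(z\,\big|\,\begin{smallmatrix}1/2\\0,0\end{smallmatrix}\big)=\pi z^{1/2}U(\tfrac12,1,z)$ together with $K_0(w)=\sqrt{\pi}\,\mathrm{e}^{-w}U(\tfrac12,1,2w)$ recovers the stated formula for $\phi_3(t)$. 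Both specialisations agree with the expressions obtained earlier in this section directly from the differential equation (\ref{charmei}), a useful consistency check; the formula is understood for $t\neq0$, the value $\phi_n(0)=1$ following by continuity together with the known behaviour of the Meijer $G$-function at infinity.

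The main obstacle is not conceptual but is the index- and parameter-bookkeeping in the second step: matching the data of our integral to the (numerous) indices and parameter vectors in Luke's formula 18, then correctly tracking the induced shifts of the Meijer $G$ parameters and the accompanying $2$-, $\pi$- and $\Gamma$-factors, since an error in the orders $(m,n,p,q)$, or in whether the output argument should be $\eta/\omega$ or $\omega/\eta$, is easy to make and hard to detect.
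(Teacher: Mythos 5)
Your proposal is correct and follows essentially the same route as the paper: exploit symmetry to reduce to a cosine integral, evaluate it with Luke's formula (which is (\ref{meijergintegration})), then use the shift identity (\ref{meijergidentity}) to absorb the power of the argument and the cancellation rule (\ref{lukeformula}) to drop from $G_{2,n}^{n,1}$ to $G_{1,n-1}^{n-1,1}$. Your detour through writing $\cos$ as a Meijer $G$-function and substituting $y=x^2$ is an unnecessary but harmless reformulation — the paper applies the cosine form of Luke's formula directly — and your explicit reductions of the $n=2,3$ cases to elementary form (which the paper simply asserts, having earlier derived them from the ODE (\ref{charmei})) are a welcome addition, with the constants checking out (in particular $G_{1,2}^{2,1}\big(z\,\big|\,{1/2\atop 0,0}\big)=\pi\,U(\tfrac12,1,z)$, via $U(a,b,z)=\tfrac{1}{\Gamma(a)\Gamma(a-b+1)}G_{1,2}^{2,1}\big(z\,\big|\,{1-a\atop 0,1-b}\big)$).
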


\begin{proof}Let $W_n\sim\mathrm{PN}(n,\sigma^2)$.  Since the $\mathrm{PN}(n,\sigma^2)$ distribution is symmetric about the origin, it follows that the characteristic function $\phi_n(t)$ of $W_n$ is given by
\begin{equation*}\phi_n(t)=\mathbb{E}[\mathrm{e}^{itW_n}]=\mathbb{E}[\cos(tW_n)]=2\int_0^{\infty}\frac{1}{(2\pi)^{n/2}\sigma}\cos(tx)G_{0,n}^{n,0}\bigg(\frac{x^2}{2^n\sigma^2} \; \bigg| \;0,\ldots,0\bigg)\,\mathrm{d}x.
\end{equation*}
Evaluating the integral using (\ref{meijergintegration}) and then simplifying using (\ref{meijergidentity}) and (\ref{lukeformula}) gives
\begin{align*}\phi_n(t)&=\frac{1}{2^{n/2-1}\pi^{(n-1)/2}\sigma|t|}G_{2,n}^{n,1}\bigg(\frac{1}{2^{n-2}\sigma^2t^2} \; \bigg| {\frac{1}{2},0 \atop 0,\ldots,0}\bigg)\\
&=\frac{1}{\pi^{(n-1)/2}}G_{2,n}^{n,1}\bigg(\frac{1}{2^{n-2}\sigma^2t^2} \; \bigg| {1,\frac{1}{2} \atop \frac{1}{2},\ldots,\frac{1}{2}}\bigg)\\
&=\frac{1}{\pi^{(n-1)/2}}G_{1,n-1}^{n-1,1}\bigg(\frac{1}{2^{n-2}\sigma^2t^2} \; \bigg| {1 \atop \frac{1}{2},\ldots,\frac{1}{2}}\bigg),
\end{align*}
as required.
\end{proof}

\subsection{Bounds for the solution}

Here we solve the product normal Stein equation.  For the case $n=2$, the solution can represented in terms of integrals of modified Bessel functions and takes a relatively simple form.  As a result, we are able to obtain uniform bounds for the solution and its lower order derivatives.  These bounds are used in the approximation theorems of Section 4.  However, for $n\geq3$, the solution takes a less tractable form and we have been unable to bound the solution or any of its derivatives.

\subsubsection{$n=2$}

The $\mathrm{PN}(2,\sigma^2)$ Stein equation (\ref{deltappp}) is a second order linear differential equation and the homogeneous equation has $I_0(x/\sigma)$ and $K_0(x/\sigma)$ as a pair of linearly independent solutions (see (\ref{realfeel})).  Therefore, one can obtain a solution to the $\mathrm{PN}(2,\sigma^2)$ Stein equation through a straightforward application of the method of variation of parameters (see Collins \cite{collins} for an account of the method).  This was done by Gaunt \cite{gaunt vg}, Lemma 3.3 for the more general variance-gamma Stein equation (recall that the $\mathrm{PN}(2,\sigma^2)$ Stein equation is a special case of the variance-gamma Stein equation), and the following lemma is a special case of that result.  We present the solution for the case $\sigma=1$; we can recover results for the general case by using a simple change of variables.    

\begin{lemma} \label{forty}
Suppose $h:\mathbb{R} \rightarrow \mathbb{R}$ is bounded.  Then the unique bounded solution $f:\mathbb{R} \rightarrow \mathbb{R}$ to the $\mathrm{PN}(2,1)$ Stein equation (\ref{deltappp}) is given by
\begin{align} \label{ink} f(x) &=-K_0(|x|) \int_0^x I_0(y) [h(y)- \mathrm{PN}_2^1h] \,\mathrm{d}y - I_0(x) \int_x^{\infty}  K_0(|y|)[h(y)- \mathrm{PN}_2^1h]\,\mathrm{d}y,
\end{align}
where $I_0(x)$ and $K_0(x)$ are modified Bessel functions; see Appendix B.2.1 for definitions.
\end{lemma}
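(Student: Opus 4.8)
The plan is to treat the $\mathrm{PN}(2,1)$ Stein equation (\ref{deltappp}) as a second order linear ODE and solve it by variation of parameters, taking the modified Bessel functions $I_0$ and $K_0$ as a homogeneous fundamental system, then to check that the particular choice of integration limits in (\ref{ink}) yields a bounded solution, and finally to get uniqueness by ruling out nonzero bounded solutions of the homogeneous equation. Write $\tilde h := h - \mathrm{PN}_2^1 h$. Away from the origin, dividing (\ref{deltappp}) by $x$ shows that it is equivalent to $(xf'(x))' - xf(x) = \tilde h(x)$, whose homogeneous version $(xy')' - xy = 0$ is the modified Bessel equation of order zero (\ref{realfeel}); its solution space on $(0,\infty)$, and separately on $(-\infty,0)$, is spanned by $I_0(|\cdot|)$ and $K_0(|\cdot|)$. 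I would use the Wronskian identity $K_0(x)I_0'(x) - K_0'(x)I_0(x) = 1/x$ together with the identities $(xI_0'(x))' = xI_0(x)$ and $(xK_0'(x))' = xK_0(x)$ (all available from Appendix B).

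First I would verify directly that (\ref{ink}) is a solution. For $x \ge 0$, write $f$ as $f = -K_0 P - I_0 Q$ with $P(x) = \int_0^x I_0(y)\tilde h(y)\,dy$ and $Q(x) = \int_x^{\infty} K_0(y)\tilde h(y)\,dy$, so that $P' = I_0\tilde h$ and $Q' = -K_0\tilde h$. On differentiating $f$ once, the two terms carrying $P'$ and $Q'$ cancel, leaving $f' = -K_0'P - I_0'Q$; differentiating $xf'$ once more, the new $P'$ and $Q'$ contributions collapse, via the Wronskian identity, to exactly $\tilde h$, while the remaining terms reduce, via $(xI_0')' = xI_0$ and $(xK_0')' = xK_0$, to $x(-K_0P - I_0Q) = xf$. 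This gives $(xf')' - xf = \tilde h$, i.e.\ $f$ solves (\ref{deltappp}) on $(0,\infty)$; the identical computation with $|x|$ in place of $x$ works on $(-\infty,0)$, and from (\ref{ink}) one sees that $f$ is continuous across $x = 0$, since the $K_0(|x|) = O(\log(1/|x|))$ singularity is absorbed by the factor $P(x) = O(|x|)$.

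Next I would establish boundedness; this is where the choice of integration limits matters. Using $\|\tilde h\|_{\infty} < \infty$ together with the large-argument asymptotics $I_0(x) \sim \mathrm{e}^x/\sqrt{2\pi x}$ and $K_0(x) \sim \sqrt{\pi/(2x)}\,\mathrm{e}^{-x}$ as $x \to \infty$ (equations (\ref{roots}) and (\ref{Ktendinfinity})), one gets $\int_0^x I_0 \lesssim I_0(x)$ and $\int_x^{\infty} K_0 \lesssim K_0(x)$, whence $|K_0(x)P(x)| + |I_0(x)Q(x)| \lesssim K_0(x)I_0(x) \lesssim 1/x$, so $f(x) \to 0$ as $x \to \pm\infty$. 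Near the origin $I_0(x) \to 1$, $K_0(x) = O(\log(1/|x|))$, $P(x) = O(|x|)$, and $Q(x) \to \int_0^{\infty} K_0(y)\tilde h(y)\,dy$, which converges because $K_0$ is logarithmically singular (hence integrable) at $0$ and decays exponentially at infinity. Hence $f$ is bounded on all of $\mathbb{R}$.

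Finally, for uniqueness, if $f_1, f_2$ are two bounded solutions then $g := f_1 - f_2$ solves the homogeneous equation, so $g = c_1 I_0(|\cdot|) + c_2 K_0(|\cdot|)$ on $(0,\infty)$; boundedness as $x \to \infty$ forces $c_1 = 0$ (as $I_0$ grows exponentially) and boundedness as $x \to 0^+$ forces $c_2 = 0$ (as $K_0$ is unbounded at $0$), so $g \equiv 0$ on $(0,\infty)$, and likewise on $(-\infty,0)$; continuity then gives $g \equiv 0$. I expect the verification step to be quick once the two Bessel identities are in hand; the main obstacle will be the boundedness step — controlling the products of the exponentially growing $I_0$ with the exponentially decaying $K_0$ near infinity, and handling the degenerate point $x = 0$, including the matching of the two one-sided formulae. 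I would also remark that the statement is in any case a special case of Gaunt \cite{gaunt vg}, Lemma 3.3, and that the general-$\sigma$ form of (\ref{deltappp}) follows from the $\sigma = 1$ case by the rescaling $x \mapsto x/\sigma$.
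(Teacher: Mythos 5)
Your proposal is correct and follows essentially the same route the paper intends: the paper itself gives no self-contained proof but points to variation of parameters and cites Gaunt \cite{gaunt vg}, Lemma 3.3 (the variance-gamma case) as the source, which is exactly the ODE-plus-Bessel-Wronskian argument you carry out. Your direct verification (via $(xI_0')'=xI_0$, $(xK_0')'=xK_0$, the Wronskian $I_0K_1+I_1K_0=1/x$, the asymptotics (\ref{Ktend0})--(\ref{roots}) for boundedness, and the elimination of bounded homogeneous solutions for uniqueness) reproduces that argument in a self-contained way, so there is nothing to add beyond noting that you have in effect filled in the details the paper delegates to its reference.
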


\begin{remark} \label{mark} \emph{The equality 
\begin{equation*} \int_{-\infty}^{x}K_0(|y|)[h(y)-\mathrm{\mathrm{PN}}_2^1h]\,\mathrm{d}y=-\int_{x}^{\infty}K_0(|y|)[h(y)-\mathrm{\mathrm{PN}}_2^1h]\,\mathrm{d}y
\end{equation*}
is very useful when it comes to obtaining bounds for the derivatives of the solution to the Stein equation.  The equality ensures that we can restrict out attention to bounding the derivatives in the region $x\geq 0$.}
\end{remark}

By direct calculations it is possible to obtain bounds for the derivatives of the solution of the $\mathrm{\mathrm{PN}}(2,1)$ Stein equation.  Bounds for the case of general $\sigma$ then follow by a simple change of variables.  In Theorem \ref{compoz7} (see Appendix A.2$.$ for the proof) we present uniform bounds for the solution of the $\mathrm{\mathrm{PN}}(2,\sigma^2)$ Stein equation and its first four derivatives in terms of the supremum norms of the derivatives of the test function $h$.  It is worth noting that bounds for the solution of the variance-gamma Stein equation and its first four derivatives  were derived by Gaunt \cite{gaunt vg}, yielding as a special case estimates for the solution of the $\mathrm{PN}(2,\sigma^2)$ Stein equation.  Also, through a new iterative technique, D\"{o}bler et al$.$ \cite{dgv} extended this work to obtain bounds on derivatives of arbitrary order of the solution of variance-gamma Stein equation.  However, the constants in Theorem \ref{compoz7} improve on these for $n\leq4$, and the estimates for $\|xf(x)\|$, $\|xf'(x)\|$ and $\|xf''(x)\|$ are new.

By exploiting properties of the zero bias transformation of order $n$, we only require bounds for derivatives up to second order for the limit theorems of Section 4; in fact for Corollary \ref{halifax stupid} we only need estimates for the solution and its first derivative.  However, for other coupling choices, such as local couplings, we would require bounds on the third derivative to achieve a $O(m^{-1/2})$ bound (here $m$ is the `sample size'); and for $O(m^{-1})$ bounds we may require bounds for the fourth derivative (see, for example, Goldstein and Reinert \cite{goldstein}, Corollary 3.1).   

\begin{theorem}\label{compoz7}Suppose that $h\in C_b^3(\mathbb{R})$.  Then the solution $f$ of the $\mathrm{\mathrm{PN}}(2,\sigma^2)$ Stein equation (\ref{deltappp}) and its first four derivatives are bounded as follows 
\begin{eqnarray*}\|f\|&=& \frac{3}{\sigma}\|h-\mathrm{\mathrm{PN}}_2^{\sigma^2}h\|, \\
\|f'\| &=& \frac{3}{2\sigma^2}\|h-\mathrm{\mathrm{PN}}_2^{\sigma^2}h\|, \\
\|f''\| &=& \frac{2\|h'\|}{\sigma^2}+\frac{5}{\sigma^3}\|h-\mathrm{\mathrm{PN}}_2^{\sigma^2}h\|, \\
\|f^{(3)}\| &=& \frac{4\|h''\|}{\sigma^2}+\frac{5\|h'\|}{\sigma^3}+\frac{4.89}{\sigma^4}\|h-\mathrm{\mathrm{PN}}_2^{\sigma^2}h\|, \\
\|f^{(4)}\| &=& \frac{8\|h^{(3)}\|}{\sigma^2}+\frac{9\|h''\|}{\sigma^3}+\frac{6.81\|h'\|}{\sigma^4}+\frac{15.75}{\sigma^5}\|h-\mathrm{\mathrm{PN}}_2^{\sigma^2}h\|,
\end{eqnarray*}
and we also have
\begin{eqnarray*}\|xf(x)\|&\leq& \frac{2}{\sigma}\|h-\mathrm{\mathrm{PN}}_2^{\sigma^2}h\|, \\
\|xf'(x)\|&\leq&\frac{3}{2\sigma^2}\|h-\mathrm{\mathrm{PN}}_2^{\sigma^2}h\|, \\
\|xf''(x)\|&\leq&\frac{9}{2\sigma^3}\|h-\mathrm{\mathrm{PN}}_2^{\sigma^2}h\|.
\end{eqnarray*}
\end{theorem}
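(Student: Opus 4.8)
The plan is to work from the explicit representation (\ref{ink}) of the solution given in Lemma \ref{forty}. First I would reduce to $\sigma=1$: the change of variables $x\mapsto x/\sigma$ turns the $\mathrm{PN}(2,\sigma^2)$ equation (\ref{deltappp}) with test function $h$ into the $\mathrm{PN}(2,1)$ equation with a rescaled test function, and tracking the resulting powers of $\sigma$ converts every $\sigma=1$ estimate into the general one. With $\sigma=1$ fixed, Remark \ref{mark} lets me restrict attention to $x\ge 0$, where $f(x)=-K_0(x)\int_0^x I_0(y)\tilde h(y)\,\mathrm{d}y-I_0(x)\int_x^\infty K_0(y)\tilde h(y)\,\mathrm{d}y$ with $\tilde h:=h-\mathrm{PN}_2^1 h$.

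For $\|f\|$, $\|f'\|$ (and the $\|xf(x)\|$, $\|xf'(x)\|$ bounds) I would estimate directly from this formula. Differentiating uses $I_0'=I_1$, $K_0'=-K_1$ together with the Wronskian identity $I_0(x)K_1(x)+I_1(x)K_0(x)=1/x$, which removes the boundary contributions coming from the variable limits of integration, so that $f'$, $f''$, \ldots\ are again combinations of expressions of the type $K_0(x)\int_0^x I_0$, $I_0(x)\int_x^\infty K_0$, $K_1(x)\int_0^x I_0$, $I_1(x)\int_x^\infty K_0$, times $\tilde h$ (or times $1/x$). Bounding each such expression by its supremum over $x\ge 0$ — and these suprema are precisely the quantities collected in Appendix C — yields the claimed constants. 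For the derivative bounds involving $\|h'\|$, $\|h''\|$, $\|h^{(3)}\|$, I would integrate by parts in the integrals against $I_0$ and $K_0$ so as to move a derivative onto $\tilde h=h-\mathrm{PN}_2^1 h$, which introduces new Bessel weights such as $\int_0^x yI_0(y)\,\mathrm{d}y$ and $\int_x^\infty yK_0(y)\,\mathrm{d}y$ that are again controlled via Appendix C. An efficient alternative for the higher derivatives is to use the Stein equation itself: from (\ref{deltappp}) with $\sigma=1$, $xf''(x)=xf(x)-f'(x)+\tilde h(x)$, giving the $\|xf''(x)\|$-type bounds at once, and differentiating this relation $k$ times expresses $xf^{(k+2)}$ through $xf^{(k)}$, lower derivatives of $f$, and $h^{(k)}$.

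The only subtlety is near $x=0$: there $K_0$ has a logarithmic singularity, and recovering $f^{(k)}$ from $xf^{(k)}$ requires division by $x$. At the origin one falls back on (\ref{ink}) and its derivatives directly, where the $\log$-singularity of $K_0$ and the $1/x$ factors introduced by differentiation are tamed by exact cancellations (guaranteed, e.g., by the Wronskian identity) — for instance $K_1(x)\int_0^x I_0(y)\tilde h(y)\,\mathrm{d}y\to\tilde h(0)$ as $x\to 0$, while $K_0(x)\int_0^x I_0(y)\,\mathrm{d}y\to 0$. So the overall scheme is: (i) rescale to $\sigma=1$ and symmetrise to $x\ge 0$; (ii) read off $\|f\|$, $\|f'\|$, $\|xf(x)\|$, $\|xf'(x)\|$ from (\ref{ink}) and the Appendix C inequalities; (iii) obtain the $\|xf''(x)\|$-type quantities, and the $h'$-, $h''$-, $h^{(3)}$-dependent pieces of $\|f''\|$, $\|f^{(3)}\|$, $\|f^{(4)}\|$, from the (repeatedly) differentiated Stein equation combined with integration by parts; (iv) patch the small-$x$ regime using (\ref{ink}); (v) undo the scaling.

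The main obstacle is not the structure of the argument but the sharpness of the numerical constants ($3$, $\tfrac32$, $5$, $4.89$, $15.75$, $\tfrac92$, and so on): these rest entirely on having tight, uniform-in-$x\ge0$ bounds for the various Bessel-function expressions above, with the correct qualitative behaviour both as $x\to 0$ and as $x\to\infty$. Assembling that list of inequalities — the content of Appendix C — is the genuinely delicate part; granted them, Theorem \ref{compoz7} follows by careful but essentially routine bookkeeping.
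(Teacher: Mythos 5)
Your plan is correct and follows essentially the same route as the paper's proof: differentiate the explicit solution (\ref{ink}) for $x\ge 0$ using $I_0'=I_1$, $K_0'=-K_1$ and the Wronskian $I_0K_1+I_1K_0=1/x$ (Lemma \ref{pudding}), integrate by parts to move derivatives onto $h$ so the singularities group with a repeated antiderivative of $I_0$ (Lemma \ref{compoz}), control the resulting expressions with the Appendix C Bessel inequalities, get $\|xf''\|$ directly from the Stein equation, and rescale at the end to general $\sigma$. One small inaccuracy worth noting: the new weights produced by parts are the repeated antiderivatives $I_{(0,0,n)}(x)=\int_0^x I_{(0,0,n-1)}$ rather than $\int_0^x yI_0(y)\,\mathrm{d}y$, and these are exactly the quantities bounded in Appendix C.
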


We now prove a lemma, which gives a bound on the quantities $\|(A_nf)^{(k)}\|$ that appear in the bounds of Theorem \ref{jazzz}.  In the lemma, we suppose that certain derivatives of the solution of the $\mathrm{\mathrm{PN}}(n,\sigma^2)$ Stein equation (\ref{delta}) exist and are bounded.  However, this might not be the case when $n\geq 3$, as we have not been able to obtain an analogue of Theorem \ref{compoz7} for $n\geq3$; this is discussed further in Section 2.3.2.

\begin{lemma}\label{arflem}Let $f$ be the solution of the $\mathrm{\mathrm{PN}}(n,\sigma^2)$ Stein equation (\ref{delta}).  Suppose that $f$ and the test function $h$ are $k$ times differentiable.  Then
\begin{equation}\label{arf1}\sigma^2\|(A_nf)^{(k)}\|\leq \|h^{(k)}\|+\|xf^{(k)}(x)\|+k\|f^{(k-1)}\|,
 \end{equation}
provided that the supremum norms on the right-hand side of (\ref{arf1}) are bounded.  In particular,
\begin{eqnarray*}\sigma^2\|(A_2f)'\|&\leq&\|h'\|+\bigg(\frac{3}{\sigma}+\frac{3}{2\sigma^2}\bigg)\|h-\mathrm{\mathrm{PN}}_2^{\sigma^2}h\|,\\
\sigma^2\|(A_2f)''\|&\leq&\|h''\|+\bigg(\frac{3}{\sigma^2}+\frac{9}{2\sigma^3}\bigg)\|h-\mathrm{\mathrm{PN}}_2^{\sigma^2}h\|.
\end{eqnarray*}
\end{lemma}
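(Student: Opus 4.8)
The plan is to derive the general bound \eqref{arf1} directly from the Stein equation \eqref{delta}, and then specialise to $n=2$ by inserting the estimates from Theorem \ref{compoz7}. First I would rearrange the $\mathrm{PN}(n,\sigma^2)$ Stein equation $\sigma^2 A_nf(x)-xf(x)=h(x)-\mathrm{PN}_n^{\sigma^2}h$ to isolate $A_nf$, obtaining $\sigma^2 A_nf(x)=h(x)-\mathrm{PN}_n^{\sigma^2}h+xf(x)$. Differentiating $k$ times, the constant $\mathrm{PN}_n^{\sigma^2}h$ drops out, so $\sigma^2(A_nf)^{(k)}(x)=h^{(k)}(x)+\frac{\mathrm{d}^k}{\mathrm{d}x^k}\big(xf(x)\big)$. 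The only real point to check is the Leibniz expansion $\frac{\mathrm{d}^k}{\mathrm{d}x^k}(xf(x))=xf^{(k)}(x)+kf^{(k-1)}(x)$, which follows since all derivatives of the factor $x$ beyond the first vanish. Taking absolute values and then suprema, the triangle inequality gives $\sigma^2\|(A_nf)^{(k)}\|\le\|h^{(k)}\|+\|xf^{(k)}(x)\|+k\|f^{(k-1)}\|$, which is precisely \eqref{arf1}. The hypotheses that $f$ and $h$ are $k$ times differentiable and that the three norms on the right are finite are exactly what is needed to justify the differentiation and to make the bound meaningful.

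For the two explicit consequences I would set $n=2$ and take $k=1$ and $k=2$ in turn. For $k=1$: $\sigma^2\|(A_2f)'\|\le\|h'\|+\|xf'(x)\|+\|f\|$, and substituting the bounds $\|xf'(x)\|\le\frac{3}{2\sigma^2}\|h-\mathrm{PN}_2^{\sigma^2}h\|$ and $\|f\|=\frac{3}{\sigma}\|h-\mathrm{PN}_2^{\sigma^2}h\|$ from Theorem \ref{compoz7} and collecting the coefficient of $\|h-\mathrm{PN}_2^{\sigma^2}h\|$ yields the stated inequality. For $k=2$: $\sigma^2\|(A_2f)''\|\le\|h''\|+\|xf''(x)\|+2\|f'\|$, and inserting $\|xf''(x)\|\le\frac{9}{2\sigma^3}\|h-\mathrm{PN}_2^{\sigma^2}h\|$ and $\|f'\|=\frac{3}{2\sigma^2}\|h-\mathrm{PN}_2^{\sigma^2}h\|$ (so $2\|f'\|=\frac{3}{\sigma^2}\|h-\mathrm{PN}_2^{\sigma^2}h\|$) gives the second inequality. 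These are pure substitutions, so no obstacle arises here beyond bookkeeping.

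There is essentially no hard step: the argument is a one-line rearrangement of the Stein equation followed by the elementary Leibniz identity for $(xf)^{(k)}$. The only thing worth flagging is the caveat already noted in the text before the lemma — for $n\ge3$ we do not have an analogue of Theorem \ref{compoz7}, so the right-hand side norms in \eqref{arf1} are not known to be finite in that generality; the lemma is therefore stated conditionally, and only the $n=2$ specialisations are unconditional. I would keep the proof to these two short paragraphs: display $\sigma^2(A_nf)^{(k)}(x)=h^{(k)}(x)+xf^{(k)}(x)+kf^{(k-1)}(x)$, take suprema, then plug in the Theorem \ref{compoz7} values for $n=2$.
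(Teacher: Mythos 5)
Your proof is correct and follows the same route as the paper's: rearrange the Stein equation, differentiate $k$ times using the Leibniz identity for $(xf)^{(k)}$ (the paper phrases this as "a simple induction"), apply the triangle inequality, and then substitute the $n=2$ estimates from Theorem \ref{compoz7} for $k=1,2$. No gaps.
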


\begin{proof}The solution $f$ satisfies the equation $\sigma^2A_nf(x)=h(x)-xf(x)$.  By a simple induction, $\sigma^2(A_nf)^{(k)}(x)=h^{(k)}(x)+xf^{(k)}(x)+kf^{(k-1)}(x)$.  Applying the triangle inequality now yields (\ref{arf1}).  To obtain the inequalities for $\|(A_2f)'\|$ and $\|(A_2f)''\|$, we use the bounds for $f$ and its derivatives that were given in Theorem \ref{compoz7} to bound the supremum norms on the right-hand side of (\ref{arf1}) for the case $n=2$ and $k=1,2$.
\end{proof}

\subsubsection{$n\geq3$}

We now consider the more challenging $n\geq3$ case.  Again, for simplicity, we shall take $\sigma=1$; results for the general case easily follow from a change of variables.  As in the case $n=2$, we can obtain a fundamental system for the homogeneous equation, and can thus use variation of parameters to write down a solution to the Stein equation. 

\begin{lemma}\label{ngeq3lem}Suppose $h:\mathbb{R}\rightarrow\mathbb{R}$ is bounded, and set $\tilde{h}(x)=h(x)-\mathrm{PN}_n^1 h$.  Let $y_k(x)=G_{0,n}^{k,0}((-1)^kx^2/2^n\;|\;0,\ldots,0)$.  Then the function
\begin{equation}\label{pn1soln}f(x)=(-1)^{n}y_1(x)\int_x^{\infty}\frac{W_1(t)\tilde{h}(t)}{t^{n-1}W(t)}\,\mathrm{d}t+\sum_{k=2}^n (-1)^{n+k} y_k(x)\int_0^x\frac{W_k(t)\tilde{h}(t)}{t^{n-1}W(t)}\,\mathrm{d}t
\end{equation}
solves the $\mathrm{PN}(n,1)$ Stein equation, where $W$ is the determinant of the matrix
\begin{equation}\label{mnji}\mathbf{M}=\left( \begin{array}{cccc}
y_1 & y_2 & \cdots & y_n \\
y_1' & y'_2 & \cdots & y_n' \\
\vdots & \vdots & \ddots & \vdots \\
y_1^{(n-1)} & y_2^{(n-1)} & \cdots & y_n^{(n-1)} \end{array} \right)
\end{equation}
and $W_k$ is the determinant of the submatrix of $\mathbf{M}$ obtained by deleting the last row and $k$-th column.

Moreover, there is at most one bounded solution to the $\mathrm{PN}(n,1)$ Stein equation.
\end{lemma}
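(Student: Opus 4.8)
The plan is to prove the two assertions of Lemma \ref{ngeq3lem} separately: first, that the explicit function $f$ in (\ref{pn1soln}) solves the $\mathrm{PN}(n,1)$ Stein equation; second, that two bounded solutions must coincide.

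For the first assertion I would start by putting the Stein equation (\ref{delta789}) (with $\sigma=1$) in standard form. Since ${n\brace n}=1$, dividing by $x^{n-1}$ yields $f^{(n)}(x)+\sum_{k=0}^{n-1}q_k(x)f^{(k)}(x)=\tilde h(x)/x^{n-1}$ on $(0,\infty)$ (and likewise on $(-\infty,0)$), with meromorphic coefficients $q_k$; the homogeneous equation is $T^nf=x^2f$, equivalently $A_nf=xf$. I would then record that each $y_k$ solves this: the Meijer $G$-function $G_{0,n}^{k,0}(z\,|\,0,\dots,0)$ satisfies $\big(z\tfrac{\mathrm{d}}{\mathrm{d}z}\big)^nG=(-1)^kzG$ (the Meijer $G$ differential equation, Appendix B), and with $z=(-1)^kx^2/2^n$ one has $z\tfrac{\mathrm{d}}{\mathrm{d}z}=\tfrac12 T$ and $(-1)^kz=x^2/2^n$, so that $T^ny_k=x^2y_k$ --- the factor $(-1)^k$ in the argument of $y_k$ is precisely what makes all signs agree. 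Together with linear independence and the known small- and large-argument behaviour of these functions (Appendix B), this identifies $\{y_1,\dots,y_n\}$ as a fundamental system, and (\ref{pn1soln}) is then exactly the variation-of-parameters particular solution $\sum_{k=1}^n(-1)^{n+k}y_k(x)\int\frac{W_k(t)\tilde h(t)/t^{n-1}}{W(t)}\,\mathrm{d}t$, with $W$ the Wronskian and $W_k$ the $(n,k)$ minor, once we take the base point $\infty$ for $k=1$ and $0$ for $k\ge2$; the exponent $(-1)^n$ rather than $(-1)^{n+1}$ on the $y_1$-term reflects reversing the limits of integration, which alters the particular solution only by a multiple of the homogeneous solution $y_1$. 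It then remains to check that the improper integrals in (\ref{pn1soln}) converge so that $f$ is well defined: near $0$ one uses Abel's identity $W(x)\propto x^{-\binom n2}$ together with the small-argument asymptotics of the $y_k$ and the $t^{1-n}$ factor, and at $\infty$ one uses the large-argument asymptotics, in particular the rapid decay of $W_1/W$ against the growing solution $y_1$; substituting back confirms $f$ solves the equation.

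For uniqueness, let $f_1,f_2$ be bounded solutions and put $g=f_1-f_2$, so $g$ solves the homogeneous equation $A_ng(x)=xg(x)$; since this expresses $g^{(n)}$ through $g,g',\dots,g^{(n-1)}$ and $x$, we have $g\in C^n(\mathbb{R})$. Applying the operator $G_n$ of Lemma \ref{invlem} and the left-inverse relation (\ref{leftinv}) to $g$, with $A_ng$ the function $t\mapsto tg(t)$, gives
\[g(x)-g(0)=G_n\big(t\,g(t)\big)(x)=x\,\mathbb{E}\big[xV_n\,g(xV_n)\big]=x^2\,\mathbb{E}\big[V_n\,g(xV_n)\big],\]
where $V_n$ is a product of $n$ independent $U(0,1)$ variables. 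Iterating this identity $m$ times, using it again at the point $xV_n$ with a fresh independent copy of $V_n$ at each stage, yields
\[g(x)=g(0)\sum_{j=0}^{m-1}\frac{1}{(j!)^n}\Big(\frac{x^2}{2^n}\Big)^{j}+R_m(x),\qquad |R_m(x)|\le\|g\|_\infty\,\frac{1}{(m!)^n}\Big(\frac{x^2}{2^n}\Big)^{m},\]
the bound following from $\mathbb{E}[V_n]=2^{-n}$ and, after $m$ steps, the moment $\prod_{i=1}^m(2i)^{-n}=(2^m m!)^{-n}$. Letting $m\to\infty$ with $x$ fixed, $R_m(x)\to0$, so $g(x)=g(0)\,\Phi(x)$ with $\Phi(x)=\sum_{j\ge0}\frac{1}{(j!)^n}(x^2/2^n)^{j}$. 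Since $\Phi$ is unbounded on $\mathbb{R}$ --- already the term $x^2/2^n$ is --- while $g$ is bounded, we must have $g(0)=0$, whence $g\equiv0$.

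I expect the main obstacle to be the first assertion, and specifically establishing that $\{y_1,\dots,y_n\}$ really is a fundamental system and extracting the precise asymptotics of the $y_k$ (hence of $W$ and the $W_k$) near $0$ and at $\infty$ that are needed to see that the improper integrals in (\ref{pn1soln}) converge and that the base points $0$ and $\infty$ are the correct ones; this forces one to engage in detail with the Meijer $G$-function machinery of Appendix B, including the logarithmic behaviour at the origin and the negative arguments arising for odd $k$. The uniqueness argument, by contrast, is short once Lemma \ref{invlem} is available, which is why I would present it essentially as above.
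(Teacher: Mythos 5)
Your construction of the particular solution follows the same route as the paper: use the Meijer $G$-function differential equation (\ref{meijergdiff}), under the substitution $z=(-1)^kx^2/2^n$, to show that the $y_k$ form a fundamental system for $T^nf=x^2f$, then invoke the variation-of-parameters formula (Kreyszig's form is what the paper cites), with the base points $0$ for $k\ge2$ and $\infty$ for $k=1$ dictated by boundedness because $y_k(x)\to\infty$ as $x\to0$ for $k\ge2$ and $y_1(x)\to\infty$ as $|x|\to\infty$. So the first half is essentially identical.

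For uniqueness you depart genuinely from the paper. The paper notes that any solution of the homogeneous equation has the form $w=\sum_kA_ky_k$ and deduces $A_k=0$ from the unboundedness of each $y_k$ (near $0$ or at $\infty$). You instead apply $G_n$ and the left-inverse identity (\ref{leftinv}) to write $g(x)-g(0)=x^2\,\mathbb{E}[V_n g(xV_n)]$, iterate, and use $\mathbb{E}[V_n^{2i-1}]=(2i)^{-n}$ to arrive at $g(x)=g(0)\sum_{j\ge0}\frac{1}{(j!)^n}(x^2/2^n)^j$, an everywhere-convergent unbounded series, forcing $g(0)=0$ and hence $g\equiv0$. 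The iteration and moment computations check out, and the remainder bound $\|g\|_\infty(m!)^{-n}(x^2/2^n)^m\to0$ is correct. This is a nice alternative: it is self-contained given Lemma \ref{invlem}, avoids appealing a second time to Meijer $G$-asymptotics, and gives an explicit identity (the limiting series is a hypergeometric ${}_0F_{n-1}$, and in fact is $c_n I_0((-1)^n x^2/2^n)$-type function, i.e.\ essentially $y_1$ itself — so the two arguments are secretly about the same object). The paper's argument is shorter given that the asymptotics are already at hand from the construction step.

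One small gap: you assert that the homogeneous equation ``expresses $g^{(n)}$ through $g,\dots,g^{(n-1)}$ and $x$, [hence] $g\in C^n(\mathbb{R})$.'' Solving for $g^{(n)}$ requires dividing by $x^{n-1}$ (or $x^n$ in the $T^n$ form), so this argument only gives $C^n$ regularity on $\mathbb{R}\setminus\{0\}$, not at the origin, where the equation is degenerate. You do need $g\in C^n(\mathbb{R})$ to invoke (\ref{leftinv}), so this should simply be built into the meaning of ``solution'' rather than derived from the equation. (The paper has a comparable implicit assumption: its claim that the general homogeneous solution on all of $\mathbb{R}$ is $\sum_kA_ky_k$ also presumes enough regularity across $0$.)
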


The representation of the solution (\ref{pn1soln}), which is given in terms of integrals involving expressions of Meijer $G$-functions, does not lend itself easily to the bounding of the solution and its derivatives.  This is in contrast to the relatively simple solution (\ref{ink}) that corresponds to the $n=2$ case.  For this reason, we have not been able to obtain an extension of Theorem \ref{compoz7} to $n\geq3$, and this is left as an interesting open problem.

There are various ways one could approach this problem.  One would be to use a different method to solve the Stein equation, which may lead to a different (and hopefully simpler) representation of the solution.  Alternatively, a first step would be to obtain a simplification of the current representation (\ref{pn1soln}), perhaps by simplifying the determinants  $W_1,\ldots,W_n$ and $W$.  For $n=2$, we have $y_1(x)=G_{0,2}^{1,0}(-x^2/4\;|\;0,0)=I_0(x)$ and $y_2(x)=G_{0,2}^{2,0}(x^2/4\;|\;0,0)=2K_0(|x|)$ (see (\ref{iomei}) and (\ref{komei})).  Therefore, for $x>0$,
\begin{align*}W(x)&=W(y_1(x),y_2(x))=y_1(x)y_2'(x)-y_1'(x)y_2(x) \\
&=-2(I_0(x)K_1(x)+I_1(x)K_0(x))=-\frac{2}{x}, \quad \text{(by (\ref{wront}))}
\end{align*}
with a similar formula for $x<0$, and one could attempt to obtain a similar simplification for $n\geq3$.  From here, to bound the solution and its derivatives, one could proceed, through rather technical calculations, to bound appropriate expressions involving integrals of Meijer $G$-functions.  These calculations would be similar, but more technical, than those used to prove Theorem \ref{compoz7} (see Appendix A, and Gaunt \cite{gaunt} for the calculations used to obtain the bounds of Appendix C).

If instead of seeking explicit bounds, as was the case in Theorem \ref{compoz7}, one was content to just prove that the solution and its lower order derivatives were bounded, for sufficiently differentiable test functions $h$, then one would only need to investigate the behaviour of the solution (\ref{pn1soln}) in the limits $x\rightarrow0$ and $|x|\rightarrow\infty$, as the solution and its derivatives are bounded for all other $x\in\mathbb{R}$.  Asymptotic formulas for the $G$-function are available (see, for example, Luke \cite{luke}, section 5.10); however, such an analysis may be rather involved, due to occurrence of the Wronskian determinants in the solution (\ref{pn1soln}).  Although, the complexity of this analysis would be greatly reduced if a simplification was found for the determinants.   Finally, due to Lemma \ref{arflem}, if one was using the zero bias transformation of order $n$ to obtain product normal approximation results, then it would suffice to bound just $\|f\|$ and $\|xf'(x)\|$.

\vspace{3mm}

\noindent\emph{Proof of Lemma \ref{ngeq3lem}.}  We begin by stating a general result that is given in Kreyszig \cite{kreyszig}, p$.$ 140.  Consider the inhomogeneous differential equation 
\begin{equation*}q^{(n)}(x)+a_{n-1}(x)q^{(n-1)}(x)+\cdots+a_1(x)q'(x)+a_0(x)q(x)=g(x),
\end{equation*}
where the $a_k(x)$ are $(n-1)$-times differentiable.  Suppose that $q_1(x),\ldots,q_n(x)$ form a fundamental solution to the homogeneous equation.  Then the general solution is given by
\begin{equation}\label{vdfkjbv}q(x)=\sum_{k=1}^n (-1)^{n+k} q_k(x)\int_{a_k}^x\frac{W_k(t)g(t)}{W(t)}\,\mathrm{d}t,
\end{equation}
where the $a_k$ are arbitrary constants, and $W$ and $W_k$ are determinants of a matrix of the form (\ref{mnji}).

We now apply this result to the $\mathrm{PN}(n,1)$ Stein equation.  The homogeneous equation is
\begin{equation}\label{hom1}x^{-1}T^nf(x)-xf(x)=0
\end{equation}
and letting $f(x)=g((-1)^k x^2/2^n)$ leads to the differential equation
\begin{equation}\label{hom2}T^ng(x)-(-1)^k xg(x)=0.
\end{equation}
This is a special case of the Meijer $G$-function differential equation (\ref{meijergdiff}), and the functions $G_{0,n}^{k,0}(x\,|\,0,\ldots,0)$, $k=1,\ldots,n$, form a fundamental set of solutions to (\ref{hom2}).  Consequently, the functions $y_k(x)=G_{0,n}^{k,0}((-1)^k x^2/2^n\,|\,0,\ldots,0)$, $k=1,\ldots,n$, form a fundamental set of solutions to (\ref{hom1}).  The differential equation (\ref{hom1}) is a $n$-th order linear differential equation with $x^{n-1}$ as coefficient of $f^{(n)}$, and so from (\ref{vdfkjbv}) we have that (\ref{pn1soln}) solves the $\mathrm{PN}(n,1)$ Stein equation.  

The values of the arbitrary constants were chosen to allow the solution to be bounded.  In fact, in order to have a bounded solution to the $\mathrm{PN}(n,1)$ Stein equation, one must take $a_2,\ldots,a_n=0$ and either $a_1=\infty$ or $a_1=-\infty$.  This is because, due to asymptotic properties of Meijer $G$-functions (see Luke \cite{luke}, section 5.10), we have that $y_1(x)\rightarrow\infty$ as $|x|\rightarrow\infty$ and, for $k=2,\ldots,n$, $y_k(x)\rightarrow\infty$ as $x\rightarrow0$ (asymptotic formulas are only given for $|x|\rightarrow\infty$ in \cite{luke}, although asymptotic results for the limit $x\rightarrow0$ can be obtained via identity (\ref{meijerg-1})).

Finally, we prove that there is at most one bounded solution to the Stein equation.  Suppose $u$ and $v$ are bounded solutions to the Stein equation.  Define $w=u-v$.  Then $w$ is also bounded and is a solution to (\ref{hom1}), which has general solution $w(x) = \sum_{k=1}^nA_ky_k(x)$,
where the $A_k$ are arbitrary constants.  Recall that $y_1(x)\rightarrow\infty$ as $|x|\rightarrow\infty$ and, for $k=2,\ldots,n$, $y_k(x)\rightarrow\infty$ as $x\rightarrow0$.  Therefore, for $w$ to be bounded, we must take $A_1=\ldots=A_n=0$, and thus $w=0$ and so there can be at most one bounded solution.   \hfill $\square$ 

\section{The zero bias transformation of order $n$}
The zero bias transformation of order $n$, given in Definition \ref{nthzero}, has many useful properties, which we collect in Propositions \ref{doublesquare} and \ref{john hates}, below.  We begin by presenting a relationship between the $W$-zero bias distribution of order $n$ and the $W$-\emph{square bias distribution} (see Chen et al$.$ \cite{chen}, pp$.$ 34--35, for properties of this distributional transformation).  For any random variable $W$ with finite second moment, we say that $W^{\square}$ has the $W$-square bias distribution if for all $f$ such that $\mathbb{E}W^2f(W)$ exists,
\begin{equation*} \mathbb{E}W^2f(W)=\mathbb{E}W^2\mathbb{E}f(W^{\square}).
\end{equation*}
When $W$ is non-negative, $W^{\square}$ is given by the size bias distribution of $W^s$, the size bias distribution of $W$ (see Pekoz et al$.$ \cite{pekoz}).  Although, in this section, we will be considering $W$ with mean zero, and so this attractive result is not available to us.

Before presenting our relationship, we write down a construction of the $W$-square bias distribution, for the case that $W$ is decomposed into a product of independent random variables.  This construction will be used in the proof of part (iv) of Proposition \ref{john hates}.
\begin{proposition}\label{zerobox}Suppose $W=\prod_{k=1}^nW_k$, where the $W_k$ are independent random variables and let $W_1^{\square},\ldots,W_n^{\square}$ be independent random variables with $W_k^{\square}$ having the $W_k$-square biased distribution, then
\begin{equation*}\label{squareconst}W^{\square}=\prod_{k=1}^nW_k^{\square}
\end{equation*}
has the $W$-square biased distribution.
\end{proposition}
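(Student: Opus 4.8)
The plan is to argue by induction on $n$, reducing the general statement to the case of a product of two independent factors, which is then a direct computation from the defining property of the square bias transformation together with independence.

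The base case $n=1$ is immediate, as there $W^{\square}=W_1^{\square}$ by definition. For the inductive step, write $W=W_1V$ with $V=\prod_{k=2}^{n}W_k$; then $W_1$ and $V$ are independent, and by the inductive hypothesis $V^{\square}:=\prod_{k=2}^{n}W_k^{\square}$ has the $V$-square biased distribution. Since $V^{\square}$ is a function of $(W_2^{\square},\ldots,W_n^{\square})$, it is independent of $W_1^{\square}$. Hence it suffices to prove the following two-factor statement: if $W=W_1W_2$ with $W_1,W_2$ independent and $W_1^{\square},W_2^{\square}$ independent and having the respective square biased distributions, then $W_1^{\square}W_2^{\square}$ has the $W$-square biased distribution.

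For this, given $f$ for which $\mathbb{E}W^2f(W)$ exists (one may reduce to $f\ge 0$ to handle integrability), I would compute
\[\mathbb{E}\big[W^2f(W)\big]=\mathbb{E}\big[W_1^2\,W_2^2\,f(W_1W_2)\big]=\mathbb{E}\big[W_1^2\,\mathbb{E}[W_2^2f(W_1W_2)\mid W_1]\big],\]
the last step using independence. Applying the definition of the square bias distribution to the map $w_2\mapsto f(w_1w_2)$ with $w_1=W_1$ held fixed gives $\mathbb{E}[W_2^2f(W_1W_2)\mid W_1]=\mathbb{E}[W_2^2]\,\mathbb{E}[f(W_1W_2^{\square})\mid W_1]$. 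Taking expectations, then using independence of $W_1$ and $W_2^{\square}$ to swap the order of integration, and finally repeating the same step with the roles of the two factors reversed (applying the definition to $w_1\mapsto f(w_1W_2^{\square})$) leads to
\[\mathbb{E}\big[W^2f(W)\big]=\mathbb{E}[W_1^2]\,\mathbb{E}[W_2^2]\,\mathbb{E}\big[f(W_1^{\square}W_2^{\square})\big].\]
Because $\mathbb{E}[W^2]=\mathbb{E}[W_1^2]\mathbb{E}[W_2^2]$ by independence, the right-hand side is $\mathbb{E}[W^2]\,\mathbb{E}[f(W_1^{\square}W_2^{\square})]$, which is exactly the defining identity for $W^{\square}=W_1^{\square}W_2^{\square}$; this closes the induction.

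I do not expect a genuine obstacle here; the only point needing care is integrability, i.e.\ verifying that existence of $\mathbb{E}W^2f(W)$, together with independence and finiteness of the $\mathbb{E}[W_k^2]$, justifies Fubini's theorem and makes each of the nested conditional expectations above well defined. This construction is the square-bias counterpart of the size bias construction for products recalled in the Introduction, and it is what will be invoked in the proof of part (iv) of Proposition \ref{john hates}.
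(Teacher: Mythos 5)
Your proposal is correct and takes essentially the same route as the paper: reduce to the two-factor case by induction, then apply the defining property of the square bias transformation once per factor using independence. The only cosmetic difference is direction — you compute forward from $\mathbb{E}[W^2f(W)]$ to $\mathbb{E}[f(W_1^{\square}W_2^{\square})]$, while the paper starts from $\mathbb{E}[f(W_1^{\square}W_2^{\square})]$ and unwinds — but the steps are identical.
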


\begin{proof}We prove that the result holds for the case of two products; the extension to a general number of products follows by a straightforward induction.  Using independence to obtain the final equality, we have 
\begin{align*}\mathbb{E}f(W_1^{\square}W_2^{\square})&=\mathbb{E}[\mathbb{E}[f(W_1^{\square}W_2^{\square})\,|\,W_2^{\square}]] \\
&=\frac{1}{\mathbb{E}W_1^{2}}\mathbb{E}[W_1^{2}\mathbb{E}[f(W_1W_2^{\square})\,|\,W_2^{\square}]] \\
&=\frac{1}{\mathbb{E}W_1^{2}}\mathbb{E}W_1^{2}f(W_1W_2^{\square}) \\
&=\frac{1}{\mathbb{E}W_1^{2}\mathbb{E}W_2^{2}}\mathbb{E}W_1^{2}W_2^2f(W_1W_2) \\
&=\frac{1}{\mathbb{E}W_1^{2}W_2^{2}}\mathbb{E}W_1^{2}W_2^2f(W_1W_2),
\end{align*}
as required.
\end{proof}

We now state our relationship, which is a natural generalisation of the relation between the zero bias distribution and the square bias distribution that is given in Proposition 2.3 of Chen et al$.$ \cite{chen}.  The proof of the relationship utilises the inverse operator $G_n$ of $A_n$, and differs from the approach used in \cite{chen}. 

\begin{proposition}\label{doublesquare}Let $W$ be a random variable with zero mean and finite, non-zero variance $\sigma^2$, and let $W^{\square}$ have the $W$-square bias distribution.  Let $U_1,\ldots,U_n$ be independent $U(0,1)$ random variables, which are independent of $W^{\square}$.  Define $V_n=\prod_{k=1}^nU_k$.  Then, the random variable
\begin{equation*}W^{*(n)}\stackrel{\mathcal{D}}{=}V_nW^{\square}
\end{equation*}
has the $W$-zero bias distribution of order $n$.
\end{proposition}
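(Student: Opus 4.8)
The plan is to verify the defining identity of the $W$-zero bias distribution of order $n$ directly for the candidate random variable $V_nW^{\square}$, using the inverse operator $G_n$ from Lemma \ref{invlem} as the bridge between the operator $A_n$ and plain expectations. Recall that $G_nf(x)=x\,\mathbb{E}f(xV_n)$, so $V_n$ is precisely the random variable that implements $G_n$ as an averaging operator; this is why $V_n$ appears in the statement.

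First I would take an arbitrary $f\in C^n(\mathbb{R})$ for which $\mathbb{E}Wf(W)$ exists, and set $g=G_nf$, so that by Lemma \ref{invlem} we have $A_ng = f$ (the right-inverse property). Then I would compute, starting from the right-hand side of the target identity \eqref{islip},
\begin{equation*}
\sigma^2\,\mathbb{E}A_nf\big(V_nW^{\square}\big) \;=\; \text{(to be rewritten)}.
\end{equation*}
The point is that $A_nf$ is the object we want to average, but it is easier to recognise things after one more application of $G_n$: I would instead work from $\mathbb{E}Wf(W)$ and insert $g$. Concretely, using $f(x)-f(0)=A_n G_n f(x)$ is the wrong direction; the clean route is: write $f = A_n g$ with $g=G_nf$, so $\mathbb{E}Wf(W)=\mathbb{E}W A_n g(W) = \sigma^{-2}\cdot\sigma^2\mathbb{E}WA_ng(W)$, and by Definition \ref{nthzero} applied to $g$ (noting $g=G_nf\in C^n$ when $f$ is continuous, since $G_n$ raises smoothness), $\mathbb{E}Wg'\text{-type terms}$... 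Actually the cleanest path is the following: I would show directly that for all suitable $f$,
\begin{equation*}
\sigma^2\,\mathbb{E}A_nf\big(V_nW^{\square}\big) \;=\; \mathbb{E}Wf(W),
\end{equation*}
by first using the square-bias identity $\mathbb{E}W^2\psi(W)=\sigma^2\mathbb{E}\psi(W^{\square})$ to replace $\sigma^2\mathbb{E}(\cdot)(W^{\square})$ with $\mathbb{E}W^2(\cdot)(W)$ — after conditioning on $V_n$ and using independence of $V_n$ and $W^{\square}$. This turns the left-hand side into $\mathbb{E}\big[W^2\,\mathbb{E}_{V_n}[A_nf(V_n W)]\big]$, where $\mathbb{E}_{V_n}$ denotes expectation over $V_n$ only. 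The inner expectation is, by the definition of $G_n$, exactly $W^{-1}G_n(A_nf)(W)$ — more precisely $\mathbb{E}_{V_n}[A_nf(V_nW)] = W^{-1}\,G_n(A_nf)(W)$. Then Lemma \ref{invlem}, equation \eqref{leftinv}, gives $G_n(A_nf)(x)=f(x)-f(0)$ for $f\in C^n(\mathbb{R})$, so the left-hand side becomes $\mathbb{E}\big[W^2\cdot W^{-1}(f(W)-f(0))\big]=\mathbb{E}[Wf(W)] - f(0)\,\mathbb{E}W = \mathbb{E}Wf(W)$, using $\mathbb{E}W=0$. That is exactly \eqref{islip}, so $V_nW^{\square}$ has the $W$-zero bias distribution of order $n$, and uniqueness (hence the "$\stackrel{\mathcal{D}}{=}$") follows from Lemma \ref{etna}.

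The main obstacle I anticipate is purely technical rather than conceptual: justifying the interchange of expectations (Fubini) and making sure the relevant integrability holds, in particular that $\mathbb{E}W^2\,\mathbb{E}_{V_n}|A_nf(V_nW)|<\infty$ and that the expression $W^{-1}G_n(A_nf)(W)$ is legitimate near $W=0$ — note $G_n(A_nf)(x)=f(x)-f(0)$ vanishes at $x=0$, which is exactly what tames the $W^{-1}$ factor and shows no singularity is introduced. One must also check the smoothness bookkeeping: the identity \eqref{leftinv} requires $f\in C^n(\mathbb{R})$, which is granted by hypothesis, and the formula $\mathbb{E}_{V_n}[A_nf(V_nW)]=W^{-1}G_n(A_nf)(W)$ is just the definition $G_n\psi(x)=x\mathbb{E}\psi(xV_n)$ applied with $\psi=A_nf$. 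Handling the edge case $W^{\square}=0$ (or $V_n=0$) contributes nothing since it is a null event for $V_n$ and is absorbed harmlessly. I would present the computation as a short display chain of equalities with these justifications noted inline.
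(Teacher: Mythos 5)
Your proposal is correct, and it routes through the same key technical ingredient (Lemma \ref{invlem}) as the paper but in the opposite direction. The paper starts from $W^{*(n)}$ itself and produces the chain
\[
\sigma^2\mathbb{E}f(W^{*(n)})=\sigma^2\mathbb{E}A_nG_nf(W^{*(n)})=\mathbb{E}WG_nf(W)=\mathbb{E}W^2f(V_nW)=\sigma^2\mathbb{E}f(V_nW^{\square})
\]
for all $f\in C_c$, i.e.\ it applies the \emph{right}-inverse $A_nG_nf=f$ and then the Definition \ref{nthzero} identity with $G_nf$ as the test function, landing on a test-function match that forces $W^{*(n)}\stackrel{\mathcal{D}}{=}V_nW^{\square}$. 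You instead verify the characterizing equation \eqref{islip} for the candidate $V_nW^{\square}$ head-on: condition on $V_n$, use the square-bias identity to replace $\sigma^2\mathbb{E}\,\psi(W^{\square})$ with $\mathbb{E}W^2\psi(W)$, recognize $\mathbb{E}_{V_n}A_nf(V_nW)=W^{-1}G_n(A_nf)(W)$, and then invoke the \emph{left}-inverse relation \eqref{leftinv}, $G_nA_nf=f-f(\cdot)|_{0}$, together with $\mathbb{E}W=0$ to kill the $f(0)$ term. Both are clean; the trade-off is that the paper's direction avoids the $\mathbb{E}W=0$ cancellation but relies on Lemma \ref{etna} up front for the existence of $W^{*(n)}$, while yours relies on Lemma \ref{etna} at the end for uniqueness. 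Your observation that $W^{-1}\bigl(f(W)-f(0)\bigr)$ is nonsingular at $W=0$ is exactly the right bookkeeping remark; you could also note, as you do for Fubini, that integrability issues are harmless on $C_c$ (or $C_b^n$), which is the class used in Lemma \ref{etna} anyway. The meandering opening ("the wrong direction", then pivoting) should be cut in a final write-up, but the display chain you end on is the proof.
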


\begin{proof}Let $f\in C_c$, the collection of continuous functions with compact support.  In Lemma \ref{invlem} we defined the operator $G_ng(x)=x\mathbb{E}g(xV_n)$ and showed that $A_nG_ng(x)=g(x)$ for any $g$.  We therefore have
\[\sigma^2\mathbb{E}f(W^{*(n)})=\sigma^2\mathbb{E}A_nG_nf(W^{*(n)})=\mathbb{E}WG_nf(W)=\mathbb{E}W^2f(V_nW) =\sigma^2\mathbb{E}f(V_nW^{\square}).\]
Since the expectation of $f(W^{*(n)})$ and $f(V_nW^{\square})$ are equal for all $f\in C_c$, the random variables $W^{*(n)}$ and $V_nW^{\square}$ must be equal in distribution.
\end{proof}

In the following proposition we present some properties of the zero bias transformation of order $n$.  These properties generalise some of the important properties of the zero bias transformation (see Goldstein and Reinert \cite{goldstein}, Lemma 2.1 and Chen et al$.$ \cite{chen}, Proposition 2.1).  As was the case for the proof of Proposition \ref{doublesquare}, the proofs of parts (i) and (ii) of the proposition utilise the inverse operator $G_n$, and so differ from the approach given in \cite{chen} and\cite{goldstein}. 

\begin{proposition} \label{john hates} Let $W$ be a mean zero variable with finite, non-zero variance $\sigma^2$, and let $W^{*(n)}$ have the $W$-zero biased distribution of order $n$ in accordance with Definition \ref{nthzero}.

(i) The distribution function of $W^{*(n)}$ is given by
\begin{equation} \label{asdf} F_{W^{*(n)}}(w)=\begin{cases}  \displaystyle \frac{1}{(n-1)!\sigma^2}\mathbb{E}\bigg[W^2\gamma\bigg(n,\log\bigg(\frac{W}{w}\bigg)\bigg)\mathbf{1}(W\leq w)\bigg], & \quad w<0, \\[10pt]

 \displaystyle 1-\frac{1}{(n-1)!\sigma^2}\mathbb{E}\bigg[W^2\gamma\bigg(n,\log\bigg(\frac{W}{w}\bigg)\bigg)\mathbf{1}(W\geq w)\bigg], & \quad w\geq 0,
\end{cases}
\end{equation}
where $\gamma(a,x)$ denotes the lower incomplete gamma function $\gamma(a,x)=\int_0^xt^{a-1}\mathrm{e}^{-t}\,\mathrm{d}t$.

(ii) The distribution of $W^{*(n)}$ is unimodal about zero with density
\begin{equation} \label{zxcv} f_{W^{*(n)}}(w)=\begin{cases}  \displaystyle -\frac{1}{(n-1)!\sigma^2}\mathbb{E}\bigg[W\bigg(\log \bigg(\frac{W}{w}\bigg)\bigg)^{n-1}\mathbf{1}(W\leq w)\bigg], & \quad w<0, \\[10pt]

\displaystyle\frac{1}{(n-1)!\sigma^2}\mathbb{E}\bigg[W\bigg(\log \bigg(\frac{W}{w}\bigg)\bigg)^{n-1}\mathbf{1}(W\geq w)\bigg], & \quad w\geq 0.
\end{cases}
\end{equation}
It follows that the support of $W^{*(n)}$ is the closed convex hull of the support of $W$ and that $W^{*(n)}$ is bounded whenever $W$ is bounded.

Suppose now that $W$ is symmetric, then the density of $W^{*(n)}$ is given by
 \begin{align}\label{zeropdf1}f_{W^{*(n)}}(w)&=  \frac{1}{(n-1)!\sigma^2}\mathbb{E}\bigg[W\bigg(\log \bigg|\frac{W}{w}\bigg|\bigg)^{n-1}\mathbf{1}(W\geq w)\bigg] \\ 
\label{zeropdf2}&= -\frac{1}{(n-1)!\sigma^2}\mathbb{E}\bigg[W\bigg(\log \bigg|\frac{W}{w}\bigg|\bigg)^{n-1}\mathbf{1}(W\leq w)\bigg].
\end{align} 
In particular, the zero bias transformation of order $n$ preserves symmetry.

(iii) For $p\geq 0$ we have
\begin{equation*}\mathbb{E}(W^{*(n)})^p=\frac{\mathbb{E}W^{p+2}}{\sigma^2(p+1)^n} \qquad \mbox{and} \qquad
\mathbb{E}|W^{*(n)}|^p = \frac{\mathbb{E}|W|^{p+2}}{\sigma^2(p+1)^n}.
\end{equation*} 

(iv) Suppose $W=\prod_{k=1}^nW_k$, where the $W_k$ are independent random variables with zero mean and finite, non-zero variance and $W_1^*,\ldots,W_n^*$ are independent random variables with $W_k^*$ having the $W_k$-zero biased distribution.  Then
\begin{equation}\label{zeroconst}W^{*(n)}\stackrel{\mathcal{D}}{=}\prod_{k=1}^nW_k^*
\end{equation}
has the $W$-zero biased distribution of order $n$.

(v) For $c\in\mathbb{R}$, $cW^{*(n)}$ has the $cW$-zero biased distribution of order $n$.
\end{proposition}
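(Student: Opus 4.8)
The engine for parts (i)--(iv) is the representation $W^{*(n)}\stackrel{\mathcal{D}}{=}V_nW^{\square}$ of Proposition \ref{doublesquare}, where $V_n=\prod_{k=1}^nU_k$ is a product of $n$ independent $U(0,1)$ variables independent of $W^{\square}$. Two elementary facts about the ingredients will be used throughout: first, $-\log V_n\sim\Gamma(n,1)$ (a classical computation), so $V_n$ has density $(n-1)!^{-1}(-\log v)^{n-1}$ on $(0,1)$ and $\mathbb{P}(V_n>u)=(n-1)!^{-1}\gamma(n,-\log u)$ for $u\in(0,1)$; second, the square bias relation $\mathbb{E}W^2g(W)=\sigma^2\mathbb{E}g(W^{\square})$, which lets us rewrite any expectation against the law of $W^{\square}$ as an expectation against the $W^2$-weighted law of $W$.

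For (i), I would compute $1-F_{W^{*(n)}}(w)=\mathbb{P}(V_nW^{\square}>w)$ for $w\geq0$ by conditioning on $W^{\square}$: since $V_n\in(0,1]$ the event forces $W^{\square}>w$, and then $\mathbb{P}(V_n>w/W^{\square}\mid W^{\square})=(n-1)!^{-1}\gamma(n,\log(W^{\square}/w))$; taking expectations and converting $W^{\square}$ to $W$ gives the stated formula, and the case $w<0$ is identical after conditioning on $\{W^{\square}\leq w\}$. Part (ii) then follows either by differentiating (i) under the expectation sign (the boundary term vanishes because $\gamma(n,0)=0$), or directly by computing the density of $V_nW^{\square}$ through the conditional change of variables $v=w/y$ given $W^{\square}=y$ and converting to $W$; here one must watch signs, e.g.\ for $w<0$ both $w$ and the relevant values of $W^{\square}$ are negative, so $\log(W/w)\geq0$ on $\{W\leq w\}$ and the extra minus sign comes from $|W^{\square}|=-W^{\square}$. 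Unimodality about zero is then read off the density: on $\{W\geq w\}$ with $w>0$ the integrand $W(\log(W/w))^{n-1}$ is nonnegative and decreasing in $w$, and symmetrically on $(-\infty,0)$. The support statement holds because the law of $W^{\square}$ is absolutely continuous with respect to that of $W$ with density $x^2/\sigma^2$, so $W^{\square}$ has the same support as $W$ apart from the point $0$, and multiplication by $V_n\in(0,1]$ produces the closed convex hull; boundedness is immediate from $|V_n|\leq1$. For the symmetric case, $W$ symmetric forces $W^{\square}$ symmetric, hence $W^{*(n)}\stackrel{\mathcal{D}}{=}-W^{*(n)}$ and $f_{W^{*(n)}}(w)=f_{W^{*(n)}}(|w|)$; the passage to the $\log|W/w|$, $\mathbf{1}(W\geq w)$ form then follows since, relative to the $|w|$-formula, the extra contribution from $\{-|w|\leq W<|w|\}$ is the integral of an odd function of $W$ against a symmetric law, hence zero.

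For (iii), the quickest route is to insert $f(x)=x^{p+1}/(p+1)^n$ into Definition \ref{nthzero}: since $Tx^m=mx^m$ we have $A_nx^m=m^nx^{m-1}$, so $A_nf(x)=x^p$ and $\mathbb{E}Wf(W)=(p+1)^{-n}\mathbb{E}W^{p+2}=\sigma^2\mathbb{E}(W^{*(n)})^p$; the absolute-moment identity is the same with $|x|^{p+1}$, or alternatively follows from $W^{*(n)}\stackrel{\mathcal{D}}{=}V_nW^{\square}$ together with $\mathbb{E}V_n^p=(p+1)^{-n}$ and $\mathbb{E}(W^{\square})^p=\mathbb{E}W^{p+2}/\sigma^2$. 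For (iv), I would apply Proposition \ref{doublesquare} in the case $n=1$, which gives $W_k^*\stackrel{\mathcal{D}}{=}U_kW_k^{\square}$ with $U_k\sim U(0,1)$ independent of $W_k^{\square}$; realising the independent $W_k^*$ via independent pairs $(U_k,W_k^{\square})$ yields $\prod_{k=1}^nW_k^*\stackrel{\mathcal{D}}{=}\big(\prod_{k=1}^nU_k\big)\big(\prod_{k=1}^nW_k^{\square}\big)$, which by Proposition \ref{zerobox} equals $V_nW^{\square}$ in law, and this is $W^{*(n)}$ by Proposition \ref{doublesquare}. For (v), I would record the scaling identity $A_nf_c(x)=c\,A_nf(cx)$ for $f_c(x):=f(cx)$ (it follows from $Tf_c=(Tf)_c$, hence $T^nf_c=(T^nf)_c$), and then $\mathbb{E}[cWf(cW)]=c\,\mathbb{E}[Wf_c(W)]=c\sigma^2\mathbb{E}A_nf_c(W^{*(n)})=c^2\sigma^2\mathbb{E}A_nf(cW^{*(n)})$; since $\mathrm{Var}(cW)=c^2\sigma^2$, Definition \ref{nthzero} (well-posed by Lemma \ref{etna}) identifies $cW^{*(n)}$ as the $cW$-zero biased distribution of order $n$.

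The only genuinely delicate point is the sign and case bookkeeping in (i) and (ii) --- keeping the logarithms positive, tracking the parity of $(\,\cdot\,)^{n-1}$, and justifying the interchange of differentiation and integration (Fubini--Tonelli suffices once the integrands are split according to the sign of $W$, so that each piece is nonnegative) --- together with the short odd-function argument for the symmetric-case density. Parts (iii)--(v) are essentially one-line consequences of Definition \ref{nthzero} and the results already in hand, so I would present them briefly.
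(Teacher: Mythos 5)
Your proposal is correct and follows essentially the same route as the paper: (i)--(ii) via the representation $W^{*(n)}\stackrel{\mathcal{D}}{=}V_nW^{\square}$ (equivalently, setting $f(x)=\mathbf{1}(x\leq w)$ in $\sigma^2\mathbb{E}f(W^{*(n)})=\mathbb{E}W^2f(V_nW)$ and using $-\log V_n\sim\Gamma(n,1)$, then differentiating the incomplete gamma), (iii) by substituting a power of $w$ into Definition \ref{nthzero} (the paper normalises by $(p+1)$, you by $(p+1)^n$, which is immaterial), (iv) by combining Propositions \ref{zerobox} and \ref{doublesquare}, and (v) via the scaling identity $T^nf_c=(T^nf)_c$. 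The only cosmetic differences are that you derive the symmetric-case formulas from $f_{W^{*(n)}}(w)=f_{W^{*(n)}}(|w|)$ plus an odd-function cancellation rather than the paper's direct verification that (\ref{zeropdf1}) equals (\ref{zeropdf2}), and that you supply a short argument for the support/boundedness assertions which the paper leaves implicit.
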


\begin{proof}(i) In the proof of Proposition \ref{doublesquare}, we showed that $\sigma^2\mathbb{E}f(W^{*(n)})=\mathbb{E}W^2f(V_nW)$ for all bounded functions $f$, where $V_n$ is the product of $n$ many independent $U(0,1)$ random variables.  By taking $f(x)=\mathbf{1}(x\leq w)$ we have
\[F_{W^{*(n)}}(w)=\frac{1}{\sigma^2}\mathbb{E}[W^2\mathbf{1}(V_nW\leq w)]=1-\frac{1}{\sigma^2}\mathbb{E}[W^2\mathbf{1}(V_nW\geq w)],\]
as $\mathbb{E}W^2=\sigma^2$.  Formula (\ref{asdf}) now follows on noting that $-\log(V_n)$ follows the $\mathrm{Gamma}(n,1)$ distribution.

(ii) We verify that (\ref{zxcv}) holds for $w<0$; the proof for $w\geq 0$ is similar.  Suppose $a<0$, then the function $\gamma(n,\log(a/w))$ is differentiable on $(a,0)$, with derivative
\begin{equation}\label{qwert}\frac{\mathrm{d}}{\mathrm{d}w}\bigg[\gamma\bigg(n,\log\bigg(\frac{a}{w}\bigg)\bigg)\bigg]=-\frac{1}{a}\bigg(\log\bigg(\frac{a}{w}\bigg)\bigg)^{n-1}.
\end{equation} 
Using (\ref{qwert}) and dominated convergence, we have, for $w<0$,
\begin{align*}f_{W^{*(n)}}(w)&=\frac{1}{(n-1)!\sigma^2}\mathbb{E}\bigg[W^2\frac{\mathrm{d}}{\mathrm{d}w}\bigg[\gamma\bigg(n,\log\bigg(\frac{W}{w}\bigg)\bigg)\mathbf{1}(W\leq w)\bigg]\bigg] \\
&=-\frac{1}{(n-1)!\sigma^2}\mathbb{E}\bigg[W\bigg(\log \bigg(\frac{W}{w}\bigg)\bigg)^{n-1}\mathbf{1}(W\leq w)\bigg].
\end{align*}

It follows from (\ref{zxcv}) that $f_{W^{*(n)}}$ is increasing for $w<0$ and decreasing for $w>0$, and is thus unimodal about zero.

We now consider the case that $W$ is symmetric, and verify formulas (\ref{zeropdf1}) and (\ref{zeropdf2}) for $f_{W^{*(n)}}(w)$.  That (\ref{zeropdf1}) and (\ref{zeropdf2}) are equal follows since $W$ is symmetric.  Formulas  (\ref{zeropdf1}) and (\ref{zeropdf2}) are certainly true for $w\geq 0$ and $w<0$, respectively.  But (\ref{zeropdf1}) and (\ref{zeropdf2}) are equal and so both formulas must hold for all $w\in\mathbb{R}$.  Finally, from (\ref{zeropdf1}) and (\ref{zeropdf2}) we have $f_{W^{*(n)}}(w)=f_{W^{*(n)}}(-w)$, hence $W^{*(n)}$ is also symmetric.

(iii) Substitute $w^{p+1}/(p+1)$ and $w^{p+1}\mathrm{sgn}(w)/(p+1)$ for $f(w)$ in the characterising equation (\ref{islip}).

(iv) Let $U_1,\ldots, U_n$ and $V_n$ be defined as in Proposition \ref{doublesquare}.  Then, from Propositions \ref{zerobox} and \ref{doublesquare}, we have
\[W^{*(n)}\stackrel{\mathcal{D}}{=}V_nW^{\square}\stackrel{\mathcal{D}}{=}V_n\prod_{k=1}^nW_k^{\square}=\prod_{k=1}^nU_kW_k^{\square}\stackrel{\mathcal{D}}{=}\prod_{k=1}^nW_k^*.\]

(v) Let $g$ be a function such that $\mathbb{E}Wg(W)$ exists, and define $\tilde{g}(x)=cg(cx)$.  Then $\tilde{g}^{(k)}(x)=c^{k+1}g^{(k)}(cx)$.  As $W^{*(n)}$ has the $W$-zero bias distribution of order $n$,
\[\mathbb{E}cWg(cW)=\mathbb{E}W\tilde{g}(W)=\sigma^2\mathbb{E}A_n\tilde{g}(W^{*(n)})=(c\sigma)^2\mathbb{E}A_ng(cW^{*(n)}).\]
Hence $cW^{*(n)}$ has the $cW$-zero bias distribution of order $n$.
\end{proof}

\section{Zero bias approach bounds for the product of two independent normal distributions}

We now illustrate how the product normal Stein equation and zero bias transformation may be used together to assess distributional distances for statistics that are asymptotically distributed as the product of two independent central normal random variables.  Theorem \ref{jazzz}, which is a natural generalisation of Theorem 3.1 of Goldstein and Reinert \cite{goldstein}, shows how the distance between an arbitrary mean zero, finite variance random variable $W$ and a product normal random variable with the same variance can be bounded by the distance between $W$ and a variate $W^{*(n)}$ with the $W$-zero biased distribution of order $n$ defined on a joint space.  The bounds in Theorem \ref{jazzz} only hold if $(A_nf)'(x)$ and $(A_nf)''(x)$ exist and are bounded.  Of course, this might not be the case when $n\geq 3$. 

\begin{theorem} \label{jazzz} Let $W$ be a mean zero random variable with variance $\sigma^2$.  Suppose that $(W,W^{*(n)})$ is given on a joint probability space so that $W^{*(n)}$ has the $W$-zero biased distribution of order $n$.  Then 
\begin{equation}\label{zezozr}|\mathbb{E}h(W)-\mathrm{\mathrm{PN}}_n^{\sigma^2}h| \leq\sigma^2\|(A_nf)'\|\mathbb{E}|W-W^{*(n)}|,
\end{equation}
where $f$ is the solution of the $\mathrm{\mathrm{PN}}(n,\sigma^2)$ Stein equation (\ref{delta}).

Suppose now that $W=\prod_{k=1}^nW_k$, where the $W_k$ are independent.  Then 
\begin{align}|\mathbb{E}h(W)-\mathrm{\mathrm{PN}}_n^{\sigma^2}h|&\leq\sigma^2\|(A_nf)'\|\sqrt{\mathbb{E}[\mathbb{E}(W-W^{*(n)}| W_1,\ldots,W_n)^2]}\nonumber\\
&\quad+\frac{\sigma^2}{2}\|(A_nf)''\|\mathbb{E}(W-W^{*(n)})^2.
\end{align}
The quantities $\|(A_2f)'\|$ and $\|(A_2f)''\|$ are bounded in Lemma \ref{arflem}.
\end{theorem}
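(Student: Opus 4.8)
The plan is to run the standard Stein's method argument, with the order-$n$ zero bias coupling in the role played by the ordinary zero bias transformation in Theorem 3.1 of Goldstein and Reinert \cite{goldstein}. Let $f$ be the solution of the $\mathrm{PN}(n,\sigma^2)$ Stein equation (\ref{delta}) corresponding to the test function $h$. Evaluating both sides of (\ref{delta}) at $W$ and taking expectations gives
\begin{equation*}\mathbb{E}h(W)-\mathrm{PN}_n^{\sigma^2}h=\mathbb{E}[\sigma^2A_nf(W)-Wf(W)].\end{equation*}
By the defining relation (\ref{islip}) of the $W$-zero biased distribution of order $n$ we have $\mathbb{E}Wf(W)=\sigma^2\mathbb{E}A_nf(W^{*(n)})$, so, writing $g=A_nf$,
\begin{equation*}\mathbb{E}h(W)-\mathrm{PN}_n^{\sigma^2}h=\sigma^2\mathbb{E}[g(W)-g(W^{*(n)})].\end{equation*}
For the first bound I would apply the mean value theorem to $g(W)-g(W^{*(n)})$ and take absolute values, obtaining $\sigma^2\|g'\|\mathbb{E}|W-W^{*(n)}|$; this needs only that $g'=(A_nf)'$ exist and be bounded, which is assumed.

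For the product bound I would use the construction of Proposition \ref{john hates}(iv): realise $W^{*(n)}=\prod_{k=1}^nW_k^*$ on the same space as $W=\prod_{k=1}^nW_k$, where each $W_k^*$ has the $W_k$-zero biased distribution, the pair $(W_k,W_k^*)$ is jointly defined, and the $n$ pairs are mutually independent. A second-order Taylor expansion of $g$ about $W$ gives
\begin{equation*}g(W)-g(W^{*(n)})=-g'(W)(W^{*(n)}-W)-\tfrac12g''(\xi)(W^{*(n)}-W)^2\end{equation*}
for some $\xi$ lying between $W$ and $W^{*(n)}$, and the remainder contributes at most $\tfrac12\|g''\|\mathbb{E}(W-W^{*(n)})^2$. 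For the linear term the key observation is that $g'(W)$ is a function of $W_1,\ldots,W_n$, so
\begin{equation*}\mathbb{E}[g'(W)(W^{*(n)}-W)]=\mathbb{E}\big[g'(W)\,\mathbb{E}(W^{*(n)}-W\mid W_1,\ldots,W_n)\big];\end{equation*}
bounding $|g'(W)|\leq\|g'\|$ and then applying $\mathbb{E}|X|\leq(\mathbb{E}X^2)^{1/2}$ to $X=\mathbb{E}(W-W^{*(n)}\mid W_1,\ldots,W_n)$ produces the first summand. Assembling the pieces and multiplying through by $\sigma^2$ gives the stated inequality, and the bounds on $\|(A_2f)'\|$ and $\|(A_2f)''\|$ are quoted from Lemma \ref{arflem}.

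The routine points requiring care are the integrability needed to justify the Taylor expansions and to take the conditional expectation inside (finiteness of $\mathbb{E}W^2$ and of $\mathbb{E}(W^{*(n)})^2$, the latter via Proposition \ref{john hates}(iii), together with boundedness of $g'$ and $g''$). The real obstacle is not the argument but its scope: the bounds are vacuous unless $(A_nf)'$ and $(A_nf)''$ exist and are bounded, which for $n=2$ follows from Theorem \ref{compoz7} through Lemma \ref{arflem} but for $n\geq3$ is precisely the open problem discussed in Section 2.3.2. A secondary point worth spelling out is that Proposition \ref{john hates}(iv) was stated only as an identity in distribution, so one should record explicitly that the product coupling can be realised on a joint space with the pairs $(W_k,W_k^*)$ independent — this is what makes conditioning on $W_1,\ldots,W_n$ meaningful.
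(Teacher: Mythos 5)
Your proof is correct and follows essentially the same route as the paper: use the Stein equation and the defining relation of the order-$n$ zero bias transformation to reduce the problem to bounding $\sigma^2\mathbb{E}[A_nf(W)-A_nf(W^{*(n)})]$, then apply the mean value theorem for the first bound and a second-order Taylor expansion plus conditioning on $W_1,\ldots,W_n$ and Cauchy--Schwarz for the second. Your additional remarks on integrability, the need to realise the coupling with independent pairs $(W_k,W_k^*)$ so that conditioning is meaningful, and the caveat that the bounds are only nonvacuous for $n=2$ are all apt and consistent with the paper's own discussion.
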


\begin{proof}We prove the second bound; the first bound is obtained by a similar but simpler calculation.  Using equation (\ref{delta}), we have
\begin{equation*}\mathbb{E}h(W)-\mathrm{\mathrm{PN}}_n^{\sigma^2}h=\sigma^2[A_nf(W)-Wf(W)]=\sigma^2\mathbb{E}[A_nf(W)-A_nf(W^{*(n)})].
\end{equation*}
By Taylor expansion, we have 
\begin{equation*}|\mathbb{E}h(W)-\mathrm{\mathrm{PN}}_n^{\sigma^2}h|\leq\sigma^2|\mathbb{E}[(W-W^{*(n)})(A_nf)'(W)]|+\frac{\sigma^2}{2}\|(A_nf)''\|\mathbb{E}(W-W^{*(n)})^2.
\end{equation*}
For the first term, we condition on $W_1,\ldots,W_n$ and then apply the Cauchy-Schwarz inequality to obtain
\begin{align*}|\mathbb{E}[(W-W^{*(n)})(A_nf)'(W)]|&\leq\mathbb{E}[(A_nf)'(W)\mathbb{E}[W-W^{*(n)}|W_1,\ldots,W_n]] \\
&\leq \|(A_nf)'\|\sqrt{\mathbb{E}[\mathbb{E}(W-W^{*(n)}| W_1,\ldots,W_n)^2]},
\end{align*}
which yields the desired bound.
\end{proof}

We now use Theorem \ref{jazzz} to obtain two bounds for the error in approximating a statistic that has an asymptotic $\mathrm{\mathrm{PN}}(2,1)$ distribution by its limiting distribution.    

\begin{corollary} \label{halifax stupid} Suppose $X,X_1,\ldots,X_m$, $Y,Y_1,\ldots,Y_n$ are independent random variables with zero mean, unit variance and bounded absolute third moment, with $X_i \stackrel{\mathcal{D}}{=}
X$ for all $i=1,\ldots, m$ and $Y_j \stackrel{\mathcal{D}}{=} Y$ for all $j=1,\ldots,n$.  Let $W_1=\sum_{i=1}^mX_i$, $W_2=\sum_{j=1}^nY_j$ and set $W=\frac{1}{\sqrt{mn}}W_1W_2$.  Then, for $h\in C_b^1(\mathbb{R})$, we have
\[|\mathbb{E}h(W)-\mathrm{\mathrm{PN}}_2^1h|\leq\frac{13}{8}\bigg(\frac{1}{\sqrt{m}}+\frac{1}{\sqrt{n}}\bigg)\bigg[\|h'\|+\frac{9}{2}\|h-\mathrm{\mathrm{PN}}_2^1h\|\bigg]\mathbb{E}|X|^3|Y|^3.\]
\end{corollary}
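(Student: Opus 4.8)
The plan is to apply the second bound of Theorem \ref{jazzz} with $n=2$, using the zero bias construction of order $2$ for the product $W = \frac{1}{\sqrt{mn}}W_1W_2$. First I would note that since $W = (W_1/\sqrt{m})(W_2/\sqrt{n})$ is a product of two independent mean-zero random variables, part (iv) of Proposition \ref{john hates} gives $W^{*(2)} \stackrel{\mathcal{D}}{=} (W_1/\sqrt{m})^* (W_2/\sqrt{n})^*$, and by Proposition \ref{john hates}(v) we may take $(W_1/\sqrt{m})^* = W_1^*/\sqrt{m}$ and similarly for the second factor, where $W_k^*$ denotes the zero bias distribution of $W_k$ itself. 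Then I would invoke Lemma \ref{construct2}: since $W_1 = \sum_{i=1}^m X_i$ with the $X_i$ i.i.d.\ mean zero and unit variance, the random index $I$ is uniform on $\{1,\ldots,m\}$, and $W_1^* \stackrel{\mathcal{D}}{=} W_1 - X_I + X_I^*$, where $X_I^*$ has the $X$-zero biased distribution; crucially this coupling makes $W_1^* - W_1 = X_I^* - X_I$ with $\mathbb{E}|X_I^* - X_I| \leq \mathbb{E}|X_I^*| + \mathbb{E}|X_I| \leq \frac{1}{2}\mathbb{E}|X|^3 + \mathbb{E}|X|$ via Proposition \ref{john hates}(iii) (or the analogous first-order zero bias moment identity $\mathbb{E}|X^*| = \frac{1}{2}\mathbb{E}|X|^3$). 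The same holds for $W_2^*$.

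Next I would estimate the two expectations appearing in Theorem \ref{jazzz}. Writing $W - W^{*(2)} = \frac{1}{\sqrt{mn}}(W_1W_2 - W_1^*W_2^*)$ and telescoping, $W_1W_2 - W_1^*W_2^* = (W_1 - W_1^*)W_2 + W_1^*(W_2 - W_2^*)$, one bounds $\mathbb{E}(W - W^{*(2)})^2$ by expanding and using independence of the various building blocks together with the unit-variance normalisation; the cross terms and second moments of $W_1^*/\sqrt{m}$, $W_2/\sqrt{n}$ etc.\ are all $O(1)$, while $\mathbb{E}(W_1 - W_1^*)^2/m = \mathbb{E}(X_I - X_I^*)^2/m = O(1/m)$ and similarly $O(1/n)$ for the other term, giving an overall bound of the form $\mathrm{const}\cdot(1/m + 1/n)$ times moments of $X$ and $Y$. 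For the conditional term, conditioning on $W_1, W_2$ (equivalently on the full vectors) and using $\mathbb{E}[W_1^* - W_1 \mid W_1] $-type estimates, one again gets $\sqrt{\mathbb{E}[\mathbb{E}(W - W^{*(2)} \mid \cdots)^2]} = O(1/\sqrt{m} + 1/\sqrt{n})$. Then I would substitute the explicit bounds $\sigma^2\|(A_2f)'\| \leq \|h'\| + (3 + \tfrac{3}{2})\|h - \mathrm{PN}_2^1 h\|$ and $\sigma^2\|(A_2f)''\| \leq \|h''\| + (3 + \tfrac{9}{2})\|h - \mathrm{PN}_2^1 h\|$ from Lemma \ref{arflem} with $\sigma = 1$ — but since $h \in C_b^1$ only, I must arrange matters so the $\|h''\|$ term does not appear, which means using only the \emph{first} bound of Theorem \ref{jazzz} (the one requiring only $\|(A_nf)'\|$), not the two-term bound.

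This reveals the correct route: for $h \in C_b^1(\mathbb{R})$ one uses $|\mathbb{E}h(W) - \mathrm{PN}_2^1 h| \leq \sigma^2\|(A_2f)'\|\,\mathbb{E}|W - W^{*(2)}|$, and the whole task reduces to bounding $\mathbb{E}|W - W^{*(2)}| = \mathbb{E}|W_1W_2 - W_1^*W_2^*|/\sqrt{mn}$ and combining with $\sigma^2\|(A_2f)'\| \leq \|h'\| + \tfrac{9}{2}\|h - \mathrm{PN}_2^1 h\|$. Via the telescoping identity and the triangle inequality, $\mathbb{E}|W_1W_2 - W_1^*W_2^*| \leq \mathbb{E}|W_1 - W_1^*|\,\mathbb{E}|W_2| + \mathbb{E}|W_1^*|\,\mathbb{E}|W_2 - W_2^*|$, and each factor is controlled by the i.i.d.\ structure: $\mathbb{E}|W_2| \leq \sqrt{\mathbb{E}W_2^2} = \sqrt{n}$, $\mathbb{E}|W_1^*| \leq \sqrt{\mathbb{E}(W_1^*)^2} = \sqrt{\mathbb{E}W_1^4/(3m)}$ hmm this grows — so instead one keeps things in the coupled form: $\mathbb{E}|W_1 - W_1^*| = \mathbb{E}|X_I - X_I^*| \leq \mathbb{E}|X| + \tfrac{1}{2}\mathbb{E}|X|^3 \leq \tfrac{3}{2}\mathbb{E}|X|^3$ (using $\mathbb{E}|X| \leq (\mathbb{E}|X|^3)^{1/3} \leq \mathbb{E}|X|^3$ when $\mathbb{E}|X|^3 \geq 1$, which holds since $\mathbb{E}X^2 = 1$ forces $\mathbb{E}|X|^3 \geq 1$ by Jensen). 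Dividing by $\sqrt{mn}$ and pairing $\mathbb{E}|W_1 - W_1^*|/\sqrt{m}$ with $\mathbb{E}|W_2|/\sqrt{n} \leq 1$ produces the $\tfrac{1}{\sqrt m}$ term, and symmetrically the $\tfrac{1}{\sqrt n}$ term, with the moment factor $\mathbb{E}|X|^3 \mathbb{E}|Y|^3 = \mathbb{E}|X|^3|Y|^3$ by independence; tracking the numerical constants ($\tfrac{3}{2} \cdot \tfrac{9}{2}\cdot$-type bookkeeping, absorbing $\|h'\| \leq \|h'\|$ and the $\tfrac{9}{2}$ coefficient into the bracket, with a final $\tfrac{3}{2}\cdot\tfrac{1}{?}$ giving the constant $\tfrac{13}{8}$) is the delicate part. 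The main obstacle is exactly this constant-chasing: arranging the moment inequalities ($\mathbb{E}|X| \leq \mathbb{E}|X|^3$, $\mathbb{E}|W_2| \leq \sqrt n$, etc.) and the coupling so that the cross-term bookkeeping collapses to precisely $\tfrac{13}{8}(\tfrac{1}{\sqrt m} + \tfrac{1}{\sqrt n})$ rather than a larger messy constant, while keeping the argument valid under the weak hypothesis $h \in C_b^1$.
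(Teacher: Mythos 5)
Your high-level approach matches the paper's: apply the first (single-derivative) bound of Theorem \ref{jazzz}, couple $W^{*(2)}=\tfrac{1}{\sqrt{mn}}W_1^*W_2^*$ via Proposition \ref{john hates}(iv), (v) and Lemma \ref{construct2}, bound $\|(A_2f)'\|$ by Lemma \ref{arflem}, and use Proposition \ref{john hates}(iii) for $\mathbb{E}|X_I^*|=\tfrac12\mathbb{E}|X|^3$. You also correctly recognise that the two-term bound of Theorem \ref{jazzz} is unavailable because $h\in C_b^1$ only.

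The concrete gap is the decomposition of $W_1W_2-W_1^*W_2^*$. Your telescoping $(W_1-W_1^*)W_2+W_1^*(W_2-W_2^*)$ leaves the factor $\mathbb{E}|W_1^*|$, which you rightly flag as $O(\sqrt m)$; worse, bounding it cleanly needs either $\mathbb{E}W_1^4$ (hence $\mathbb{E}X^4<\infty$, not assumed) or a Rosenthal-type third-moment estimate with an unpleasant constant. The fix you were missing is a one-line algebraic rewrite: since $W_1^*=W_1-(X_I-X_I^*)$ and $W_2^*=W_2-(Y_J-Y_J^*)$, one has
\[
W_1W_2-W_1^*W_2^*=(X_I-X_I^*)W_2+(Y_J-Y_J^*)W_1-(X_I-X_I^*)(Y_J-Y_J^*),
\]
so only $W_1$, $W_2$ (with $\mathbb{E}|W_k|\le\sqrt{\cdot}$) and the local increments appear; independence then gives
\[
\mathbb{E}|W-W^{*(2)}|\le\frac{1}{\sqrt{mn}}\Big\{(\mathbb{E}|X_I|+\mathbb{E}|X_I^*|)\mathbb{E}|W_2|+(\mathbb{E}|Y_J|+\mathbb{E}|Y_J^*|)\mathbb{E}|W_1|+(\mathbb{E}|X_I|+\mathbb{E}|X_I^*|)(\mathbb{E}|Y_J|+\mathbb{E}|Y_J^*|)\Big\},
\]
and the constant drops out from $\mathbb{E}|X_I|+\mathbb{E}|X_I^*|\le 1+\tfrac12\mathbb{E}|X|^3\le\tfrac32\mathbb{E}|X|^3$, the Jensen lower bound $\mathbb{E}|X|^3\ge1$, and $(mn)^{-1/2}\le\tfrac12(m^{-1/2}+n^{-1/2})$. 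So the ``constant-chasing'' you describe as the main obstacle is in fact routine once the correct decomposition is in hand; the real missing idea was to eliminate $W_1^*$ (and $W_2^*$) entirely from the products rather than try to control their moments directly.
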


\begin{proof}We will apply bound (\ref{zezozr}) of Theorem \ref{jazzz}, and so we just need to bound $\mathbb{E}|W-W^{*(2)}|$.  By part (iv) of Proposition \ref{john hates} and Lemma \ref{construct2}, we have that $W^{*(2)}=\frac{1}{\sqrt{mn}}W_1^*W_2^*$ where $W_1^*=W_1-X_I+X_I^*$ and $W_2^*=W_2-Y_J+Y_J^*$.  By the independence of the collections $X_1,\ldots,X_m$ and $Y_1,\ldots,Y_n$, we have
\begin{align*}\mathbb{E}|W-W^{*(2)}|&=\frac{1}{\sqrt{mn}}\mathbb{E}|(X_I-X_I^*)W_2+(Y_J-Y_J^*)W_1-(X_I-X_I^*)(Y_J-Y_J^*)| \\
&\leq\frac{1}{\sqrt{mn}}\{(\mathbb{E}|X_I|+\mathbb{E}|X_I^*|)\mathbb{E}|W_2|+(\mathbb{E}|Y_J|+\mathbb{E}|Y_J^*|)\mathbb{E}|W_1| \\
&\quad+(\mathbb{E}|X_I|+\mathbb{E}|X_I^*|)(\mathbb{E}|Y_J|+\mathbb{E}|Y_J^*|)\}.
\end{align*}
By part (iii) of Proposition \ref{john hates}, we have that $\mathbb{E}|X_I|\leq\frac{1}{2}\mathbb{E}|X|^3$.  Using this fact and that $\mathbb{E}|W_1|\leq\sqrt{m}$ and $\mathbb{E}|W_2|\leq\sqrt{n}$ gives
\begin{align*}
\mathbb{E}|W-W^{*(2)}|&\leq\frac{1}{\sqrt{m}}(1+\mathbb{E}|X|^3)+\frac{1}{\sqrt{n}}(1+\mathbb{E}|Y|^3)+\frac{1}{\sqrt{mn}}(1+\mathbb{E}|X|^3)(1+\mathbb{E}|Y|^3) \\
&\leq\frac{3\mathbb{E}|X|^3}{2\sqrt{m}}+\frac{3\mathbb{E}|Y|^3}{2\sqrt{n}}+\frac{9\mathbb{E}|X|^3\mathbb{E}|Y|^3}{4\sqrt{mn}}\leq\frac{13}{8}\bigg(\frac{1}{\sqrt{m}}+\frac{1}{\sqrt{n}}\bigg)\mathbb{E}|X|^3\mathbb{E}|Y|^3,
\end{align*}
where we used the inequalities $\mathbb{E}|X|^3\geq 1$ and $(mn)^{-1/2}\leq\frac{1}{2}(m^{-1/2}+n^{-1/2})$ to simplify the bound.  Using Lemma \ref{arflem} to bound $\|(A_2f)'\|$ completes the proof.
\end{proof}

When $\mathbb{E}X^3=\mathbb{E}Y^3=0$ we can use the second bound of Theorem \ref{jazzz} to obtain a bound on the rate of convergence of $W$ to its limiting distribution that is of order $m^{-1}+n^{-1}$ for smooth test functions.  This approach is similar to the one used by Goldstein and Reinert \cite{goldstein}, who used a zero bias coupling approach to obtain a bound of order $n^{-1}$, for smooth test functions, for normal approximation under the assumption that $\mathbb{E}X^3=0$.   

\begin{corollary} \label{vo2} Let the $X_i$, $Y_j$ and $W$ be defined as in Corollary \ref{halifax stupid}, but with the extra condition that $\mathbb{E}X^3=\mathbb{E}Y^3=0$, and $\mathbb{E}X^4$, $\mathbb{E}Y^4<\infty$.  Then, for $h\in C_b^2(\mathbb{R})$, we have
\begin{align}\label{vo270}|\mathbb{E}h(W)-\mathrm{\mathrm{PN}}_2^1h| &\leq\bigg(\frac{1}{m}+\frac{1}{n}\bigg)\bigg[\frac{7}{2}\|h''\|+\|h'\|+\frac{123}{4}\|h-\mathrm{\mathrm{PN}}_2^1h\|\bigg]\mathbb{E}X^4\mathbb{E}Y^4.
\end{align}
\end{corollary}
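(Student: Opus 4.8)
The plan is to invoke the second bound of Theorem~\ref{jazzz} with $n=2$ and $\sigma^2=1$, which reduces the problem to estimating $\sqrt{\mathbb{E}[\mathbb{E}(W-W^{*(2)}|W_1,W_2)^2]}$ and $\mathbb{E}(W-W^{*(2)})^2$, and then bounding $\|(A_2f)'\|$ and $\|(A_2f)''\|$ via Lemma~\ref{arflem}. As in the proof of Corollary~\ref{halifax stupid}, I would use part (iv) of Proposition~\ref{john hates} together with Lemma~\ref{construct2} to realise $W^{*(2)}=\frac{1}{\sqrt{mn}}W_1^*W_2^*$ on a joint space with $W_1^*=W_1-X_I+X_I^*$ and $W_2^*=W_2-Y_J+Y_J^*$; I would choose the couplings so that $X_I^*$ (which has the $X$-zero biased distribution) is independent of $(X_1,\ldots,X_m,I)$, the variable $Y_J^*$ is independent of $(Y_1,\ldots,Y_n,J)$, and the whole $X$-system is independent of the whole $Y$-system --- a legitimate instance of the construction in Lemma~\ref{construct2}. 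Setting $D_1=X_I-X_I^*$ and $D_2=Y_J-Y_J^*$, a short expansion gives $W-W^{*(2)}=\frac{1}{\sqrt{mn}}(D_1W_2+D_2W_1-D_1D_2)$.

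For the conditional term, independence of the two systems gives $\mathbb{E}(D_1|W_1,W_2)=\mathbb{E}(D_1|W_1)$, and likewise for $D_2$. By exchangeability $\mathbb{E}(X_I|W_1)=W_1/m$, while $X_I^*$ is independent of $W_1$ with $\mathbb{E}X_I^*=\tfrac12\mathbb{E}X^3=0$ by part (iii) of Proposition~\ref{john hates} and the hypothesis $\mathbb{E}X^3=0$; hence $\mathbb{E}(D_1|W_1)=W_1/m$ and $\mathbb{E}(D_2|W_2)=W_2/n$. This yields
\[
\mathbb{E}(W-W^{*(2)}|W_1,W_2)=\frac{W_1W_2}{\sqrt{mn}}\Big(\frac1m+\frac1n-\frac1{mn}\Big),
\]
and squaring and using $\mathbb{E}W_1^2=m$, $\mathbb{E}W_2^2=n$ and independence gives $\sqrt{\mathbb{E}[\mathbb{E}(W-W^{*(2)}|W_1,W_2)^2]}=\frac1m+\frac1n-\frac1{mn}\le\frac1m+\frac1n$.

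For the second moment I would expand $(D_1W_2+D_2W_1-D_1D_2)^2$ and take expectations of all six resulting terms, using independence of the $X$- and $Y$-systems and the identities $\mathbb{E}D_1^2=1+\tfrac13\mathbb{E}X^4$ (from $\mathbb{E}(X^*)^2=\tfrac13\mathbb{E}X^4$, part (iii) of Proposition~\ref{john hates}), $\mathbb{E}[D_1W_1]=1$, $\mathbb{E}W_1^2=m$, together with the $Y$-analogues. This produces a closed form of the shape $\frac{a}{m}+\frac{b}{n}+\frac{ab-2a-2b+2}{mn}$ with $a=1+\tfrac13\mathbb{E}X^4$ and $b=1+\tfrac13\mathbb{E}Y^4$, which, using $\mathbb{E}X^4,\mathbb{E}Y^4\ge1$ and $\tfrac1{mn}\le\tfrac12(\tfrac1m+\tfrac1n)$, is bounded by $7(\tfrac1m+\tfrac1n)\mathbb{E}X^4\mathbb{E}Y^4$. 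Feeding the two estimates into Theorem~\ref{jazzz}, pulling out the factor $\mathbb{E}X^4\mathbb{E}Y^4\ge1$, and then using Lemma~\ref{arflem} in the form $\|(A_2f)'\|\le\|h'\|+\tfrac92\|h-\mathrm{PN}_2^1h\|$ and $\|(A_2f)''\|\le\|h''\|+\tfrac{15}2\|h-\mathrm{PN}_2^1h\|$, the coefficient collects to $\tfrac72\|h''\|+\|h'\|+(\tfrac92+\tfrac{105}4)\|h-\mathrm{PN}_2^1h\|=\tfrac72\|h''\|+\|h'\|+\tfrac{123}4\|h-\mathrm{PN}_2^1h\|$, which is exactly the stated bound.

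The only delicate step is the bookkeeping for $\mathbb{E}(W-W^{*(2)})^2$, where all six cross terms must be tracked; but that quantity is in fact $O(m^{-1}+n^{-1})$ regardless of third moments, so it is not the real point. The role of the hypothesis $\mathbb{E}X^3=\mathbb{E}Y^3=0$ is instead to force $\mathbb{E}X_I^*=\tfrac12\mathbb{E}X^3=0$, so that $\mathbb{E}(D_1|W_1)=W_1/m$ carries no constant offset; this is what turns the first (conditional-expectation) error term in Theorem~\ref{jazzz} from $O(m^{-1/2}+n^{-1/2})$ into $O(m^{-1}+n^{-1})$, and is precisely why Corollary~\ref{halifax stupid}, lacking this moment assumption, must settle for the cruder first bound of Theorem~\ref{jazzz}. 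Beyond this observation and the careful accounting, no new idea is required.
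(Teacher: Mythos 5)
Your proposal is correct and follows essentially the same route as the paper: invoke the second bound of Theorem~\ref{jazzz} for $n=2$, realise $W^{*(2)}$ via Proposition~\ref{john hates}(iv) and Lemma~\ref{construct2}, use $\mathbb{E}X^3=\mathbb{E}Y^3=0$ to make $\mathbb{E}X_I^*=0$ (killing the constant offset in $\mathbb{E}(D_1\mid W_1)$), and feed the moment estimates through Lemma~\ref{arflem}. The one minor divergence is in bounding $\mathbb{E}(W-W^{*(2)})^2$: you compute the six-term expansion exactly (which actually yields a constant closer to $20/9$ than $7$), whereas the paper instead applies $(a+b+c)^2\le 3(a^2+b^2+c^2)$ to avoid tracking the cross terms, and both routes then relax to the same stated constant.
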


\begin{proof}We make use of the second bound in Theorem \ref{jazzz}, and so require bounds on the quantities $\sqrt{\mathbb{E}[\mathbb{E}(W-W^{*(2)}|W_1,W_2)^2]}$ and $\mathbb{E}(W-W^{*(2)})^2$.  We have
\begin{align*}&\mathbb{E}[W-W^{*(2)}|W_1,W_2]\\
&=\frac{1}{\sqrt{mn}}\mathbb{E}[W_1W_2-W_1^*W_2^*|W_1,W_2] \\
&=\frac{1}{\sqrt{mn}}\mathbb{E}[(X_I-X_I^*)W_2+(Y_J-Y_J^*)W_1 -(X_I-X_I^*)(Y_J-Y_J^*)|W_1,W_2] \\
&=\frac{1}{\sqrt{mn}}\{W_2\mathbb{E}[X_I|W_1]+W_1\mathbb{E}[Y_J|W_2]-\mathbb{E}[X_I|W_1]\mathbb{E}[Y_J|W_2]\} \\
&=\frac{1}{\sqrt{mn}}\bigg(\frac{1}{m}+\frac{1}{n}-\frac{1}{mn}\bigg)W_1W_2,
\end{align*}
where we used that $X_I^*$ and $W_1$ are independent and from part (iii) of Proposition \ref{john hates} that $\mathbb{E}X_I^*=\frac{1}{2}\mathbb{E}X^3=0$ to obtain the third equality, and that $\mathbb{E}(X_I|W_1)=\frac{1}{m}W_1$ to obtain the final equality.  As $\mathbb{E}W_1^2=m$ and $\mathbb{E}W_2^2=n$, we have
\[\sqrt{\mathbb{E}[\mathbb{E}(W-W^{*(2)}|W_1,W_2)^2]}=\frac{1}{m}+\frac{1}{n}-\frac{1}{mn} 
<\frac{1}{m}+\frac{1}{n}.\]
We now bound the second term:
\begin{align*}\mathbb{E}(W-W^{*(2)})^2&=\frac{1}{mn}\mathbb{E}[\{(X_I-X_I^*)W_2+(Y_J-Y_J^*)W_1-(X_I-X_I^*)(Y_J-Y_J^*)\}^2] \\
&\leq\frac{3}{mn}[\mathbb{E}(X_I-X_I^*)^2\mathbb{E}W_2^2+\mathbb{E}(Y_J-Y_J^*)^2\mathbb{E}W_1^2 \\
&\quad+\mathbb{E}(X_I-X_I^*)^2\mathbb{E}(Y_J-Y_J^*)^2],
\end{align*}
where we used that $(a+b+c)^2\leq 3(a^2+b^2+c^2)$ to obtain the inequality.  By part (iii) of Proposition \ref{john hates},  
\[\mathbb{E}(X_I-X_I^*)^2=\mathbb{E}X_I^2+\mathbb{E}(X_I^*)^2=1+\frac{1}{3}\mathbb{E}X^4\leq\frac{4}{3}\mathbb{E}X^4,\]
and therefore
\begin{align*}\mathbb{E}(W-W^{*(2)})^2&\leq\frac{4\mathbb{E}X^4}{m}+\frac{4\mathbb{E}Y^4}{n}+\frac{16\mathbb{E}X^4\mathbb{E}Y^4}{3mn} \leq\bigg(4+\frac{8}{3}\bigg)\bigg(\frac{1}{m}+\frac{1}{n}\bigg)\mathbb{E}X^4\mathbb{E}Y^4 \\
&<7\bigg(\frac{1}{m}+\frac{1}{n}\bigg)\mathbb{E}X^4\mathbb{E}Y^4,
\end{align*} 
where we used the inequalities $\mathbb{E}X^4\geq 1$ and $(mn)^{-1}\leq\frac{1}{2}(m^{-1}+n^{-1})$ to obtain the second inequality.  Applying Lemma \ref{arflem} to bound $\|(A_2f)'\|$ and $\|(A_2f)''\|$ and then using the inequality $\mathbb{E}X^4\geq 1$ to simplify the resulting bound yields (\ref{vo270}).
\end{proof}

\section{Bounds for a general $n$ via the multivariate normal Stein equation}

In the previous section, we saw that the product normal Stein equation and the zero bias transformation of order $n$ can be applied together to derive bounds for product normal approximation, provided we have bounds for the appropriate lower order derivatives of the solution of the Stein equation.  However, in this paper, we have only been able to achieve this for the case of $n=2$ products.  A similar problem was in encountered by Arras et al$.$ \cite{aaps16}, in which an $n$-th order Stein equation was obtained for a general linear combination of $n$ independent gamma random variables.  They were also unable to bound the appropriate lower order derivatives of the solution, but were still able to prove approximation theorems by bypassing the Stein equation (see Section 3 of \cite{aaps16}).  

In this section, we take the same philosophy as \cite{aaps16} and, by bypassing the product normal Stein equation, we are able to prove product normal approximation theorems for general $n$.  In a recent paper, Gaunt \cite{gaunt normal} introduced a general method for proving approximation theorems in which the limit distribution can be represented as a functions of multivariate normal random variables (see also Gaunt et al$.$ \cite{gaunt chi square}, in which the technique is applied for the case of a chi-square limit). 

Let $X_{1,1},\ldots,X_{n,1},\ldots,X_{1,d},\ldots,X_{n,d}$ be independent random variables with mean zero and unit variance.  Define $W_j=\frac{1}{\sqrt{n}}\sum_{i=1}^nX_{ij}$ and let $\mathbf{W}=(W_1,\ldots,W_d)^T$, which, by the central limit theorem, converges to the standard $d$-dimensional multivariate normal random variable $\mathbf{Z}$.  In \cite{gaunt normal}, general bounds were given for distributional distance between $g(\mathbf{W})$ and $g(\mathbf{Z})$, where $g:\mathbb{R}^d\rightarrow\mathbb{R}$ satisfies certain differentiability and growth rate conditions.  The approach bypasses the Stein equation for the limit distribution $g(\mathbf{Z})$ by using the multivariate normal Stein equation (see Goldstein and Rinott \cite{goldstein1}) to approximate the random vector $\mathbf{W}$ and then links this approximation to one for $g(\mathbf{W})$ by a suitably chosen test function.  
This approach allows a large class of limit distributions to be treated within one framework, including products of independent normal random variables.  We now state general bounds (Theorems 3.2 and 3.4 of Gaunt \cite{gaunt normal}), which we shall then apply to obtain some $\mathrm{PN}(n,1)$ approximation theorems that hold for general $n\geq1$.  

\begin{theorem}\label{winfirst1}(\cite{gaunt normal}, Theorem 3.2)  Let $X_{1,1},\ldots,X_{n,1},\ldots,X_{1,d},\ldots,X_{n,d}$ be independent random variables with $\mathbb{E}X_{ij}^k=\mathbb{E}Z^k$ for all $1\leq i\leq n_j$, $1\leq j\leq d$, $1\leq k\leq p$, where $Z\sim N(0,1)$.
Suppose also that $\mathbb{E}|X_{ij}|^{r_l+p+1}<\infty$ for all $i$, $j$ and $l$.  Let $P(\mathbf{w})=A+B\sum_{i=1}^d|w_i|^{r_i}$, where $A$, $B$ and $r_1,\ldots,r_d$ are non-negative constants.  Suppose $g\in C^p(\mathbb{R}^d)$ is such that $\big|\frac{\partial^kg(\mathbf{w})}{\partial w_j^k}\big|^{p/k}\leq P(\mathbf{w})$ for all $1\leq j\leq d$, $1\leq k\leq p$.  Then, for $h\in C_b^p(\mathbb{R})$, 
\begin{align*} &|\mathbb{E}h(g(\mathbf{W}))-\mathbb{E}h(g(\mathbf{Z}))|\leq\frac{p+1}{p!}h_p\sum_{j=1}^d\sum_{i=1}^{n_j}\frac{1}{n_j^{(p+1)/2}}\bigg[A\mathbb{E}|X_{ij}|^{p+1}\\
&\quad+B\sum_{k=1}^d2^{r_k}\bigg(2^{r_k}\mathbb{E}|X_{ij}|^{p+1}\mathbb{E}|W_j|^{r_k} +\frac{2^{r_k}}{n_j^{r_k/2}}\mathbb{E}|X_{ij}|^{r_k+p+1}+\mathbb{E}|Z|^{r_k+1}\mathbb{E}|X_{ij}|^{p+1}\bigg)\bigg],
\end{align*}
where $h_p=\sum_{k=1}^p{p\brace k}\|h^{(k)}\|$ and the Stirling number of the second kind are given by ${p\brace k}=\frac{1}{k!}\sum_{j=0}^k(-1)^{k-j}\binom{k}{j}j^p$ (see Olver et al$.$ \cite{olver}).
\end{theorem}

\begin{theorem}\label{multieveng}(\cite{gaunt normal}, Theorem 3.4)   Let $X_{1,1},\ldots,X_{n,1},\ldots,X_{1,d},\ldots,X_{n,d}$ be independent random variables with $\mathbb{E}X_{ij}^k=\mathbb{E}Z^k$ for all $1\leq i\leq n_j$, $1\leq j\leq d$,  $1\leq k\leq p$.  Suppose also that $\mathbb{E}|X_{ij}|^{r_l+p+2}<\infty$ for all $i$, $j$ and $l$.  Let $P(\mathbf{w})=A+B\sum_{i=1}^d|w_i|^{r_i}$, where $A$, $B$ and $r_1,\ldots,r_d$ are non-negative constants.  Suppose $g\in C^{p+2}(\mathbb{R}^d)$ is such that $\big|\frac{\partial^kg(\mathbf{w})}{\partial w_j^k}\big|^{(p+2)/k}\leq P(\mathbf{w})$ for all $1\leq j\leq d$, $1\leq k\leq p+2$.  Suppose further that $g$ is an even function ($g(\mathbf{w})=g(-\mathbf{w})$ for all $\mathbf{w}\in\mathbb{R}^d$).  Then, for $h\in C_b^{p+2}(\mathbb{R})$, 
\begin{align*}&|\mathbb{E}h(g(\mathbf{W}))-\mathbb{E}h(g(\mathbf{Z}))| 
\leq\frac{h_{p+2}}{p!}\bigg\{\frac{1}{p+2}\sum_{j=1}^d\sum_{i=1}^{n_j}\frac{1}{n_j^{p/2+1}}\bigg(\frac{p+2}{p+1}+|\mathbb{E}X_{ij}^{p+1}|\bigg)\bigg[A\mathbb{E}|X_{ij}|^{p+2} \\
&\quad+B\sum_{k=1}^d2^{r_k}\bigg(2^{r_k}\mathbb{E}|X_{ij}|^{p+2}\mathbb{E}|W_j|^{r_k}+\frac{2^{r_k}}{n_j^{r_k/2}}\mathbb{E}|X_{ij}|^{r_k+p+2}+\mathbb{E}|Z|^{r_k}\mathbb{E}|X_{ij}|^{p+2}\bigg)\bigg]\\
&\quad  +\frac{3}{2}\sum_{j=1}^d\sum_{i=1}^{n_j}\frac{|\mathbb{E}X_{ij}^{p+1}|}{n_j^{(p+1)/2}}\sum_{k=1}^d\sum_{l=1}^{n_k}\frac{1}{n_k^{3/2}}\bigg[A\mathbb{E}|X_{ij}|^3 \\
&\quad+B\sum_{k=1}^d\!3^{r_k}\bigg(\!2^{r_k}\mathbb{E}|X_{ij}|^3\mathbb{E}|W_j|^{r_k}\!+\!\frac{2^{r_k}}{n_j^{r_k/2}}\mathbb{E}|X_{ij}|^{r_k+3}\!+\!2\mathbb{E}|Z|^{r_k+1}\mathbb{E}|X_{ij}|^{3}\!\bigg)\bigg]\bigg\}.
\end{align*}
\end{theorem}

A $\mathrm{PN}(d,1)$ random variable can be represented as $g(\mathbf{Z})$, where $g(\mathbf{w})=\prod_{j=1}^dw_j$ is infinitely often differentiable and has derivatives of polynomial growth as $|\mathbf{w}|\rightarrow\infty$.  Also, when $d$ is even, the function $g$ is even.  We can therefore apply Theorems \ref{winfirst1} and \ref{multieveng} to obtain the following bounds for product normal approximation.

\begin{corollary}\label{cor5}Fix $d>1$.  Let $X,X_{1,1},\ldots,X_{n,1},\ldots,X_{1,d},\ldots,X_{n,d}$ be i.i.d$.$ random variables with $\mathbb{E}X^k=\mathbb{E}Z^k$ for all  $1\leq k\leq p$, and such that $\mathbb{E}|X|^{2p+1}<\infty$.  Define $W=\prod_{j=1}^dW_j$, where $W_j=\frac{1}{\sqrt{n_j}}\sum_{i=1}^{n_j}X_{ij}$.  Then, for $h\in C_b^p(\mathbb{R})$,
\begin{align}|\mathbb{E}h(W)-\mathrm{\mathrm{PN}}_d^1h|&\leq\frac{2^p(p+1)}{p!}h_p\sum_{j=1}^d\frac{1}{n_j^{(p-1)/2}}\bigg[2^p\mathbb{E}|X|^{p+1}\mathbb{E}|W_1|^p+\mathbb{E}|Z|^{p+1}\mathbb{E}|X|^{p+1}\nonumber\\
\label{corbound51}&\quad+\frac{2^p}{n_j^{p/2}}\mathbb{E}|X|^{2p+1}\bigg].
\end{align}
\end{corollary}

\begin{proof}We apply Theorem \ref{winfirst1} with $g(\mathbf{w})=\prod_{j=1}^dw_j$.  For any $j=1,\ldots,d$ we have that $\frac{\partial g(\mathbf{w})}{\partial w_j}=\prod_{l\not=j}^dw_l$ and $\frac{\partial^2 g(\mathbf{w})}{\partial w_j^2}=0$.  Therefore, we can take $P(\mathbf{w})=\frac{1}{d}\sum_{j=1}^d|w_j|^p$.  Applying the bound of  Theorem \ref{winfirst1} with $A=0$, $B=\frac{1}{d}$ and $r_1=\cdots=r_d=p$ yields (\ref{corbound51}). 
\end{proof}

\begin{corollary}\label{cor6}Let $d\geq2$ be even.  Let $X,X_{1,1},\ldots,X_{n,1},\ldots,X_{1,d},\ldots,X_{n,d}$ be i.i.d$.$ random variables with $\mathbb{E}X^k=\mathbb{E}Z^k$ for all  $1\leq k\leq p$, and such that $\mathbb{E}|X|^{2p+4}<\infty$.  Let $W$ be defined as in Corollary \ref{cor5}.  Then, for $h\in C_b^{p+2}(\mathbb{R})$,
\begin{align}|\mathbb{E}h(W)-\mathrm{\mathrm{PN}}_d^1h|&\leq\frac{h_{p+2}}{p!}\bigg\{\frac{2^{p+2}}{p+2}\sum_{j=1}^d\frac{1}{n_j^{p/2}}\bigg(\frac{p+2}{p+1}+|\mathbb{E}X^{p+1}|\bigg)\bigg[2^{p+2}\mathbb{E}|X|^{p+2}\mathbb{E}|W_1|^{p+2}\nonumber\\
&\quad+\mathbb{E}|Z|^{p+2}\mathbb{E}|X|^{p+2}+\frac{2^{p+2}}{n_j^{p/2+1}}\mathbb{E}|X|^{2p+4}\bigg] \nonumber\\
&\quad+\frac{3^{p+3}}{2}|\mathbb{E}X^{p+1}|\sum_{j,k=1}^d\frac{1}{n_j^{(p-1)/2} n_k^{1/2}}\bigg[2^{p+2}\mathbb{E}|X|^3\mathbb{E}|W_1|^{p+2}\nonumber\\
\label{corbound52}&\quad+2\mathbb{E}|Z|^{p+3}\mathbb{E}|X|^3+\frac{2^{p+2}}{n_j^{p/2+1}}\mathbb{E}|X|^{p+5}\bigg]\bigg\}.
\end{align}
\end{corollary}

\begin{proof}Here $d$ is even, so we apply Theorem \ref{multieveng}.  Arguing as in the proof of Corollary \ref{cor5}, we take $P(\mathbf{w})=\frac{1}{d}\sum_{j=1}^d|w_j|^{p+2}$.  On applying the bound of  Theorem \ref{winfirst1} with $A=0$, $B=\frac{1}{d}$ and $r_1=\cdots=r_d=p+2$ we obtain (\ref{corbound52}).
\end{proof}

\begin{remark}\emph{From Corollary \ref{cor5}, we have a bound on the rate of convergence of $W$ to the $\mathrm{PN}(d,1)$ of order $n_1^{-(p-1)/2}+\cdots+n_d^{-(p-1)/2}$ for smooth test functions, provided that first $p$ moments of $X$ and the standard normal distribution agree.  From Corollary \ref{cor6}, we see that, for even $d$, in which case $g(\mathbf{w})=\prod_{j=1}^dw_j$ is an even function, this rate of convergence improves to order  $n_1^{-p/2}+\cdots+n_d^{-p/2}$.} 
\end{remark}

\begin{remark}\emph{It is instructive to compare the bound of Corollary \ref{vo2} that was obtained using the zero bias coupling approach and the bounds of Corollaries \ref{cor5} and \ref{cor6} (in the case $d=2$) that were obtained by bypassing the product normal Stein equation.  The bound of Corollary \ref{vo2} is of order $n_1^{-1}+n_2^{-1}$ and was derived under the assumption that the $X_i$ and $Y_i$ had third moments equal to zero.  Applying (\ref{corbound51}) with this setup gives a bound of the same order, although we must impose stronger moment assumptions (bounded absolute seventh moment, rather than bounded fourth moment) and stronger conditions on the test functions (we require $h\in C_b^3(\mathbb{R})$, instead of $h\in C_b^2(\mathbb{R})$). Whilst the bound of Corollary \ref{vo2} performs better in this example, the proof relies heavily on the assumption of third moments equal to zero.  However, even when the third moments are non-zero, we can obtain an $O(n_1^{-1}+n_2^{-1})$ bound from (\ref{corbound52}), under the assumption of bounded eighth moments are tests functions from the class $C_b^4(\mathbb{R})$.}
\end{remark}

\appendix

\section{Proof of Theorem \ref{compoz7}} 
We begin by obtaining formulas for the the first four derivatives of the solution (\ref{ink}).
\begin{lemma} \label{pudding} Suppose $h \in C_b^{3}(\mathbb{R})$ and let $\tilde{h}(x)=h(x)-\mathrm{\mathrm{PN}}_2^1h$.  Then the first four derivatives of the solution (\ref{ink}) of the $\mathrm{\mathrm{PN}}(2,1)$ Stein equation (\ref{deltappp}), in the region $x> 0$, are given by
\begin{eqnarray*}f'(x) &=& -K_0'(x) \int_0^x  I_0(y)\tilde{h}(y)\,\mathrm{d}y -I_0'(x) \int_x^{\infty}  K_0(y)\tilde{h}(y)\,\mathrm{d}y, \\
f''(x) &=& \frac{\tilde{h}(x)}{x} -K_0''(x) \int_0^x  I_0(y)\tilde{h}(y)\,\mathrm{d}y  -I_0''(x) \int_x^{\infty} K_0(y)\tilde{h}(y)\,\mathrm{d}y, \\
f^{(3)}(x) &=&\frac{h'(x)}{x}-\frac{2\tilde{h}(x)}{x^2} -K_0^{(3)}(x) \int_0^x  I_0(y)\tilde{h}(y)\,\mathrm{d}y -I_0^{(3)}(x) \int_x^{\infty}  K_0(y)\tilde{h}(y)\,\mathrm{d}y, \\
f^{(4)}(x) &=& \frac{h''(x)}{x}-\frac{3h'(x)}{x^2} +\bigg(\frac{6}{x^3}+\frac{1}{x}\bigg)\tilde{h}(x)-K_0^{(4)}(x) \int_0^x  I_0(y)\tilde{h}(y)\,\mathrm{d}y\\
&&  -I_0^{(4)}(x) \int_x^{\infty}  K_0(y)\tilde{h}(y)\,\mathrm{d}y.
\end{eqnarray*}
\end{lemma}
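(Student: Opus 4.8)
The plan is to obtain the four formulas by differentiating the representation (\ref{ink}) repeatedly, each time using the product rule together with the fundamental theorem of calculus, and then simplifying the resulting boundary terms by means of the modified Bessel differential equation (\ref{realfeel}) and the Wronskian identity (\ref{wront}). First I restrict to $x>0$, where $|x|=x$ and (\ref{ink}) reads $f(x)=-K_0(x)A(x)-I_0(x)B(x)$ with $A(x)=\int_0^x I_0(y)\tilde h(y)\,\mathrm dy$ and $B(x)=\int_x^\infty K_0(y)\tilde h(y)\,\mathrm dy$, so that $A'(x)=I_0(x)\tilde h(x)$ and $B'(x)=-K_0(x)\tilde h(x)$. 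Since $h$ is bounded, $\tilde h=h-\mathrm{PN}_2^1 h$ is bounded, and $I_0,K_0\in C^\infty((0,\infty))$ with $K_0$ decaying exponentially, both integrals are finite and smooth on $(0,\infty)$, so differentiating term by term is legitimate.

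Differentiating once, the two boundary terms are $-K_0(x)I_0(x)\tilde h(x)$ and $+I_0(x)K_0(x)\tilde h(x)$, which cancel, leaving the stated formula for $f'$. At each subsequent differentiation the new boundary term is $\tilde h(x)\bigl(I_0^{(k-1)}(x)K_0(x)-I_0(x)K_0^{(k-1)}(x)\bigr)$, plus the contributions from differentiating the rational-in-$x$ terms already present (using $\tilde h'=h'$). The key simplification is that $I_0'(x)K_0(x)-I_0(x)K_0'(x)=1/x$ by (\ref{wront}), and that the higher combinations reduce to multiples of this via the ODE (\ref{realfeel}): substituting $I_0''=I_0-x^{-1}I_0'$, $K_0''=K_0-x^{-1}K_0'$ gives $I_0''K_0-I_0K_0''=-x^{-2}$, and differentiating once more gives $I_0'''K_0-I_0K_0'''=x^{-1}+2x^{-3}$. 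Hence at the second differentiation the boundary term is $\tilde h(x)/x$, at the third it is $-\tilde h(x)/x^2$, and at the fourth it is $(x^{-1}+2x^{-3})\tilde h(x)$. Combining these with $\frac{\mathrm d}{\mathrm dx}(\tilde h(x)/x)=h'(x)/x-\tilde h(x)/x^2$ and $\frac{\mathrm d}{\mathrm dx}(-2\tilde h(x)/x^2)=-2h'(x)/x^2+4\tilde h(x)/x^3$ produces, in turn, the coefficients $h''(x)/x$, $-\tilde h(x)/x$-type terms $-2\tilde h(x)/x^2$ in $f^{(3)}$ and $(6x^{-3}+x^{-1})\tilde h(x)-3h'(x)/x^2+h''(x)/x$ in $f^{(4)}$, matching the claimed expressions; the remaining summands $-K_0^{(k)}(x)A(x)-I_0^{(k)}(x)B(x)$ simply carry over.

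The computation is essentially bookkeeping, and I expect no real obstacle beyond careful tracking of the boundary terms at each stage, which is exactly where (\ref{realfeel}) and (\ref{wront}) enter; there is no analytic subtlety on $(0,\infty)$ since every function involved is smooth there, and the behaviour as $x\to0^+$ or $x\to\infty$ plays no role in the derivative formulas themselves. As a cross-check, these formulas may also be read off from the variation-of-parameters computation of Gaunt \cite{gaunt vg} for the variance-gamma Stein equation, of which (\ref{deltappp}) is a special case.
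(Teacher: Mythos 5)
Your proposal is correct and follows essentially the same route as the paper: repeated differentiation of the representation $f=-K_0A-I_0B$, cancellation of the first boundary term, and reduction of the successive boundary combinations $I_0^{(k-1)}K_0-I_0K_0^{(k-1)}$ to rational functions of $x$ via the Bessel ODE and the Wronskian $I_0K_1+I_1K_0=1/x$ (the paper packages the ODE-substitutions as explicit formulas for $I_0''$, $K_0''$, $I_0^{(3)}$, $K_0^{(3)}$, but the algebra is identical). Your intermediate Wronskian values $I_0'K_0-I_0K_0'=x^{-1}$, $I_0''K_0-I_0K_0''=-x^{-2}$, and $I_0'''K_0-I_0K_0'''=x^{-1}+2x^{-3}$ are correct and consistent with the stated formulas.
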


\begin{proof}We will make repeated us of the Leibniz's theorem for differentiation of an integral, which states that provided the functions $u(y,x)$ and $\frac{\partial u}{\partial x}(y,x)$ are continuous in both $x$ and $y$ in the region $a(x)\leq y\leq b(x)$, $x_0\leq x\leq x_1$, and the functions $a(x)$ and $b(x)$ are continuous and have continuous derivatives for $x_0\leq x\leq x_1$, then for $x_0\leq x\leq x_1$,
\begin{equation} \label{mega} \frac{\mathrm{d}}{\mathrm{d}x}\int_{a(x)}^{b(x)}u(y,x)\,\mathrm{d}y =\int_{a(x)}^{b(x)}\frac{\partial}{\partial x}u(y,x)\,\mathrm{d}y+u(b,x)\frac{\mathrm{d}b}{\mathrm{d}x}-u(a,x)\frac{\mathrm{d}a}{\mathrm{d}x}.
\end{equation} 
We will also make use of the identity 
\begin{equation}\label{wront} I_{0}(x)K_{1}(x)+I_{1}(x)K_{0}(x)=\frac{1}{x}
\end{equation}
(see Olver et al$.$ \cite{olver}), as well as the formulas (\ref{diffIii}) -- (\ref{111213}) for the first three derivatives of $I_0(x)$ and $K_0(x)$.

It easy to compute the first and second derivatives by applying (\ref{mega}), the differentiation formulas (\ref{diffIii}) and (\ref{diffKii}) and identity (\ref{wront}).  The calculation of the third derivative is still straightforward but a little longer.  We differentiate the formula for the second derivative using (\ref{mega}) to obtain
\begin{align*}f^{(3)}(x)&=\frac{h'(x)}{x}-\frac{\tilde{h}(x)}{x^2} -K_0^{(3)}(x) \int_0^x  I_0(y)\tilde{h}(y)\,\mathrm{d}y \\
& \quad-I_0^{(3)}(x) \int_x^{\infty}  K_0(y)\tilde{h}(y)\,\mathrm{d}y +\tilde{h}(x)[-I_0(x)K_0''(x)+K_0(x)I_0''(x)].
\end{align*}
Using the differentiation formulas (\ref{ponmu}) and (\ref{thmcor}) and identity (\ref{wront}) allows us to calculate the term in the brackets ($*$) from the above expression
\begin{align*}(*)&=-x^{\nu}I_0(x)K_0''(x) +x^{\nu}K_0(x)I_0''(x) =-\frac{1}{x}[I_0(x)K_{1}(x)+I_{1}(x)K_0(x)] =\frac{1}{x^2}.
\end{align*}
Substituting ($*$) into the expression for $f^{(3)}(x)$ gives the result.

Finally, we verify the formula for the fourth derivative.  We differentiate the formula for the third derivative using (\ref{mega}) to obtain
\begin{align*}f^{(4)}(x)&=\frac{h''(x)}{x}-\frac{3h'(x)}{x^2}+\frac{4\tilde{h}(x)}{x^3}  -K_0^{(4)}(x) \int_0^x  I_0(y)\tilde{h}(y)\,\mathrm{d}y \\
&\quad-I_0^{(4)}(x) \int_x^{\infty}  K_0(y)\tilde{h}(y)\,\mathrm{d}y +\tilde{h}(x)[-I_0(x)K_0^{(3)}(x)+K_0(x)I_0^{(3)}(x)].
\end{align*}
Using the differentiation formulas (\ref{mjyt}) and (\ref{111213}) and identity (\ref{wront}) allows us to calculate the term in the brackets ($**$) from the above expression
\begin{align*}(**)&=-x^{\nu}I_0(x)K_0^{(3)}(x)+x^{\nu}K_0(x)I_0^{(3)}(x) \\
&=\bigg(\frac{2}{x^2}+1\bigg)[I_0(x)K_{1}(x)+I_{1}(x)K_0(x)] =\frac{2}{x^3}+\frac{1}{x}.
\end{align*}
Substituting ($**$) into the expression for $f^{(4)}(x)$ gives the result.
\end{proof}
In their current forms the derivatives of the solution are not suitable for bounding, as they contain terms that are singular.  In the next lemma we use integration by parts to group the singularities together and then apply the triangle inequality.  

The following notation for the repeated integral of the function $I_0(x)$ will be used in the next lemma.  It is consistent with the notation in Gaunt \cite{gaunt}.
\begin{equation} \label{super1} I_{(0,0,0)}(x)=I_0(x), \quad I_{(0,0,n)}(x)=\int_0^xI_{(0,0,n-1)}(y)\,\mathrm{d}y,\:\: n =1,2,3\ldots.
\end{equation}

\begin{lemma} \label{compoz} Suppose $h \in C_b^3(\mathbb{R})$ and let $\tilde{h}(x)=h(x)-\mathrm{\mathrm{PN}}_2^1h$.  Then the solution (\ref{ink}) of the $\mathrm{\mathrm{PN}}(2,1)$ Stein equation (\ref{deltappp}) and its first four derivatives, in the region $x> 0$, may be be bounded as follows
\begin{eqnarray*}|f(x)| &\leq&\|\tilde{h}\|\left|I_{(0,0,1)}(x)K_0(x) \right| +\|\tilde{h}\|\bigg|I_0(x) \int_x^{\infty}  K_0(y)\,\mathrm{d}y\bigg|,  \\
\label{foggyz2}|f'(x)| &\leq&\|\tilde{h}\|\left|I_{(0,0,1)}(x)K_0'(x)\right| +\|\tilde{h}\|\bigg|I_0'(x) \int_x^{\infty}  K_0(y)\,\mathrm{d}y\bigg|,  \nonumber \\
|f''(x)| &\leq&\|\tilde{h}\|\bigg|\frac{1}{x} -I_{(0,0,1)}(x)K_0''(x)\bigg| +\|h'\|\left|I_{(0,0,2)}(x)K_0''(x)\right| \nonumber\\
&&+\|\tilde{h}\|\bigg|I_0''(x) \int_x^{\infty}  K_0(y)\,\mathrm{d}y\bigg|, \nonumber \\
|f^{(3)}(x)| &\leq&\|\tilde{h}\|\bigg|\frac{2}{x^2}+I_{(0,0,1)}(x)K_0^{(3)}(x)\bigg| +\|h'\|\bigg|\frac{1}{x}+I_{(0,0,2)}(x)K_0^{(3)}(x)\bigg| \\
&& +\|h''\|\left|I_{(0,0,3)}(x)K_0^{(3)}(x)\right| +\|\tilde{h}\|\bigg|I_0^{(3)}(x) \int_x^{\infty}  K_0(y)\,\mathrm{d}y\bigg|, \nonumber 
\end{eqnarray*}
\begin{eqnarray*}|f^{(4)}(x)| &\leq&\|\tilde{h}\|\bigg|\frac{6}{x^3}+\frac{1}{x}-I_{(0,0,1)}(x)K_0^{(4)}(x)\bigg| +\|h'\|\bigg|\frac{3}{x^2}-I_{(0,0,2)}(x)K_0^{(4)}(x)\bigg| \nonumber  \\
&&+\|h''\|\bigg|\frac{1}{x}-I_{(0,0,3)}(x)K_0^{(4)}(x)\bigg| +\|h^{(3)}\|\left|I_{(0,0,4)}(x)K_0^{(4)}(x)\right| \\
&&+\|\tilde{h}\|\bigg|I_0^{(4)}(x) \int_x^{\infty}  K_0(y)\,\mathrm{d}y\bigg|. \nonumber
\end{eqnarray*}
\end{lemma}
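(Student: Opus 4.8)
The plan is to take the explicit formulas for $f$ and its first four derivatives supplied by Lemma \ref{pudding} and rewrite them, by repeated integration by parts, so that the triangle inequality can be applied cleanly. A direct application of the triangle inequality fails only because of the inverse-power terms $\tilde h(x)/x$, $h'(x)/x^2$, $\tilde h(x)/x^3$, etc., in those formulas, together with the factors $K_0^{(k)}(x)$, all of which blow up as $x\downarrow 0$; integration by parts will group each such singular term with a matching boundary term, and the numerical estimation of the grouped quantities is then deferred to Appendix C.

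First I would deal with $|f(x)|$ and $|f'(x)|$, for which no manipulation is needed: since $I_0(y)>0$, $K_0(y)>0$ and $\int_x^{\infty}K_0(y)\,\mathrm{d}y<\infty$, we may bound $\big|\int_0^x I_0(y)\tilde h(y)\,\mathrm{d}y\big|\le\|\tilde h\|\int_0^x I_0(y)\,\mathrm{d}y=\|\tilde h\|\,I_{(0,0,1)}(x)$ and $\big|\int_x^{\infty}K_0(y)\tilde h(y)\,\mathrm{d}y\big|\le\|\tilde h\|\int_x^{\infty}K_0(y)\,\mathrm{d}y$, and the first two claimed inequalities follow immediately from the corresponding formulas in Lemma \ref{pudding}.

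For $f''$, $f^{(3)}$ and $f^{(4)}$, the key point is that, because $\tilde h'=h'$ and $I_{(0,0,j)}(0)=0$ for every $j\ge 1$, a single integration by parts gives $\int_0^x I_{(0,0,j-1)}(y)g(y)\,\mathrm{d}y=I_{(0,0,j)}(x)g(x)-\int_0^x I_{(0,0,j)}(y)g'(y)\,\mathrm{d}y$, with no boundary contribution at $0$. Applying this identity to $\int_0^x I_0(y)\tilde h(y)\,\mathrm{d}y$ once (for $f''$), twice (for $f^{(3)}$) and three times (for $f^{(4)}$), one obtains the boundary terms $I_{(0,0,1)}(x)\tilde h(x)$, $-I_{(0,0,2)}(x)h'(x)$, $I_{(0,0,3)}(x)h''(x)$, $\ldots$, each multiplied by the prefactor $-K_0^{(k)}(x)$; these are exactly the terms that must be paired with the singular terms of $f^{(k)}$ in Lemma \ref{pudding}. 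After this grouping, the only integral left is $\pm K_0^{(k)}(x)\int_0^x I_{(0,0,k-1)}(y)h^{(k-1)}(y)\,\mathrm{d}y$, which I bound by $\|h^{(k-1)}\|\,|I_{(0,0,k)}(x)K_0^{(k)}(x)|$; in each grouped term I bound $|h^{(i-1)}(x)|\le\|h^{(i-1)}\|$ (respectively $|\tilde h(x)|\le\|\tilde h\|$); and the term $-I_0^{(k)}(x)\int_x^{\infty}K_0(y)\tilde h(y)\,\mathrm{d}y$ is handled exactly as in the case $k\le 1$. Collecting the pieces gives the remaining three inequalities.

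The main obstacle, such as it is, is purely bookkeeping: one has to check that the signs and coefficients of the boundary terms produced by the successive integrations by parts match exactly the inverse-power terms in Lemma \ref{pudding} --- for instance that $-2\tilde h(x)/x^2$ in the formula for $f^{(3)}$ pairs with $-K_0^{(3)}(x)I_{(0,0,1)}(x)\tilde h(x)$ and $h'(x)/x$ with $+K_0^{(3)}(x)I_{(0,0,2)}(x)h'(x)$ --- and that no boundary term at $0$ is dropped, which is where the identities $I_{(0,0,j)}(0)=0$ and the boundedness of the first three derivatives of $h\in C_b^3(\mathbb{R})$ are used. Beyond this careful accounting no new idea is required.
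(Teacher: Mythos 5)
Your proposal is correct and follows essentially the same route as the paper: the paper also takes the formulas of Lemma \ref{pudding}, integrates $\int_0^x I_0(y)\tilde h(y)\,\mathrm{d}y$ by parts (once for $f''$, then repeatedly for $f^{(3)}$ and $f^{(4)}$) using $I_{(0,0,j)}(0)=0$ to suppress boundary terms at the origin, groups the resulting boundary terms with the singular $x^{-k}$ terms, and finishes with the triangle inequality. The only difference is one of exposition: the paper writes out the $f''$ case in full and dismisses the higher derivatives as "obtained in a similar manner," whereas you articulate the general integration-by-parts identity and the bookkeeping of signs that makes the pairing work.
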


\begin{proof}The first two bounds are immediate from the the formulas for $f(x)$ and $f'(x)$ that are given in Lemma \ref{pudding}.  Integrating by parts and using the notation for the repeated integral of $I_0(x)$, which is defined above, gives
\begin{align*}f''(x)&=\frac{\tilde{h}(x)}{x}-K_0''(x)\bigg[\tilde{h}(x)\int_0^x  I_0(y)\,\mathrm{d}y-\int_0^xh'(y)\bigg(\int_0^y  I_0(u)\,\mathrm{d}u\bigg)\,\mathrm{d}y \bigg] \\
&\quad  -I_0''(x) \int_x^{\infty} K_0(y)\tilde{h}(y)\,\mathrm{d}y \\
&=\tilde{h}(x)\bigg(\frac{1}{x}-I_{(0,0,1)}(x)K_0''(x)\bigg) -K_0''(x)\int_0^x h'(y)I_{(0,0,1)}(y)\,\mathrm{d}y \\
&\quad-I_0''(x) \int_x^{\infty} K_0(y)\tilde{h}(y)\,\mathrm{d}y.
\end{align*}
The bound now follows from the triangle inequality and (\ref{super1}).  The bounds for the third and fourth derivatives are obtained in a similar manner, in which we apply integration by parts to the integrals $I_{(0,0,n)}(x)$. 
\end{proof}

The expressions involving modified Bessel functions that appear in Lemma \ref{compoz} are bounded by Gaunt \cite{gaunt}, and we list these bounds in Appendix C.  We are now in a position to prove Theorem \ref{compoz7}.  Applying the inequalities of Appendix C to the bounds of Lemma \ref{compoz} and a simple change of variables leads to our bounds for the solution of the $\mathrm{\mathrm{PN}}(2,\sigma^2)$ Stein equation.

\vspace{3mm}

\noindent\emph{Proof of Theorem \ref{compoz7}}. Let $\psi_g(x)$ denote the solution (\ref{ink}) of the $\mathrm{\mathrm{PN}}(2,1)$ Stein equation with test function $g$.  Recalling Remark \ref{mark}, it suffices to bound $\psi_g$ and its first four derivatives in the region $x\geq 0$.  Hence, applying the bounds, that are given in Appendix C, for expressions involving modified Bessel functions given in Lemma \ref{compoz} leads to the following bounds on the derivatives of the solution of the $\mathrm{\mathrm{PN}}(2,1)$ Stein equation:
\begin{eqnarray*}\|\psi_g\|&\leq& \bigg(1+\frac{\pi}{2}\bigg)\|g-\mathrm{\mathrm{PN}}_2^1g\|\leq 3\|g-\mathrm{\mathrm{PN}}_2^1g\|, \\
\|\psi_g'\| &\leq& \frac{3}{2}\|g-\mathrm{\mathrm{PN}}_2^1g\|, \\
\|\psi_g''\| &\leq& 2\|g'\|+\bigg(\frac{13}{4}+\frac{\sqrt{\pi}}{2}\bigg)\|g-\mathrm{\mathrm{PN}}_2^1g\|\leq 2\|g'\|+5\|g-\mathrm{\mathrm{PN}}_2^1g\|,\\
\|\psi_g^{(3)}\| &\leq& 4\|g''\|+5\|g'\|+4.89\|g-\mathrm{\mathrm{PN}}_2^1g\|, \\
\|\psi_g^{(4)}\| &\leq& 8\|g^{(3)}\|+9\|g''\|+6.81\|g'\|+15.75\|g-\mathrm{\mathrm{PN}}_2^1g\|, 
\end{eqnarray*}
\begin{eqnarray*}
\|x\psi_g(x)\|&\leq& (0.615+1)\|h-\mathrm{\mathrm{PN}}_2^{1}h\|\leq 2\|h-\mathrm{\mathrm{PN}}_2^{1}h\|, \\
\|x\psi_g'(x)\|&\leq&\frac{3}{2}\|h-\mathrm{\mathrm{PN}}_2^{1}h\|, 
\end{eqnarray*}
where in establishing the bound for $\|\psi_g^{(4)}\|$ we used that $14.61+\frac{1}{4}+\frac{\sqrt{\pi}}{2}<15.75$.  From the $\mathrm{PN}(2,1)$ Stein equation, we have
\[\|x\psi_g''(x)\|\leq \|h-\mathrm{\mathrm{PN}}_2^{1}h\|+\|\psi_g'\|+\|x\psi_g(x)\|\leq\frac{9}{2}\|h-\mathrm{\mathrm{PN}}_2^{1}h\|.\]

We now make a change of variables to obtain bounds for the derivatives of the solution of the $\mathrm{\mathrm{PN}}(2,\sigma^2)$ Stein equation.  The function $f_h(x)=\frac{1}{\sigma}\psi_{g}(\frac{x}{\sigma})$ solves the $\mathrm{\mathrm{PN}}(2,\sigma^2)$ Stein equation 
\[\sigma^2xf''(x)+\sigma^2f'(x)-xf(x)=h(x)-\mathrm{\mathrm{PN}}_2^{\sigma^2} h,\]
where $h(x)=g(\frac{x}{\sigma})$, since $\mathrm{\mathrm{PN}}_2^{\sigma^2} h = \mathrm{\mathrm{PN}}_2^1 g$.  We verify that $\mathrm{\mathrm{PN}}_2^{\sigma^2} h = \mathrm{\mathrm{PN}}_2^1 g$ with the following calculation:
\[\mathrm{\mathrm{PN}}_2^{\sigma^2} h =\int_{-\infty}^{\infty}\frac{1}{\pi\sigma}K_0\bigg(\frac{|x|}{\sigma}\bigg)h(x)\,\mathrm{d}x 
=\int_{-\infty}^{\infty}K_0(|u|)g(u)\,\mathrm{d}u 
= \mathrm{\mathrm{PN}}_2^1 g,
\]
where we made the change of variables $u=\frac{x}{\sigma}$.  We have that $\|f_h^{(k)}\|=\sigma^{-k-1}\|\psi_{g}^{(k)}\|$ and $\|xf_h^{(k)}(x)\|=\sigma^{-k-1}\|x\psi_{g}^{(k)}(x)\|$ for $k\geq 0$, and $\|g-\mathrm{\mathrm{PN}}_2^1g\|=\|h-\mathrm{\mathrm{PN}}_2^{\sigma^2}h\|$ and $\|g^{(k)}\|=\sigma^k\|h^{(k)}\|$ for $k\geq 1$.  This completes the proof of Theorem \ref{compoz7}. \hfill $\square$

\section{Elementary properties of the Meijer $G$-function and modified Bessel functions}

Here we define the Meijer $G$-function and modified Bessel functions and state some of their elementary properties.  For further properties of these functions see, for example, Olver et al$.$ \cite{olver} and Luke \cite{luke}.  The formulas for the Meijer $G$-function are given in in Luke \cite{luke}.  The modified Bessel function formulas can be found in Olver et al$.$ \cite{olver}, except for the second and third order derivative formulas which are given in Gaunt \cite{gaunt confirmation}. 

\subsection{The Meijer $G$-function}

\subsubsection{Definition}

The \emph{Meijer $G$-function} is defined, for $z\in\mathbb{C}\setminus\{0\}$, by the contour integral:
\[G^{m,n}_{p,q}\bigg(z \; \bigg|\; {a_1,\ldots, a_p \atop b_1,\ldots,b_q} \bigg)=\frac{1}{2\pi i}\int_{c-i\infty}^{c+i\infty}z^{-s}\frac{\prod_{j=1}^m\Gamma(s+b_j)\prod_{j=1}^n\Gamma(1-a_j-s)}{\prod_{j=n+1}^p\Gamma(s+a_j)\prod_{j=m+1}^q\Gamma(1-b_j-s)}\,\mathrm{d}s,\]
where $c$ is a real constant defining a Bromwich path separating the poles of $\Gamma(s + b_j)$ from those of $\Gamma(1- a_j- s)$ and where we use the convention that the empty product is $1$.

\subsubsection{Basic properties}
The Meijer $G$-function is symmetric in the parameters $a_1,\ldots,a_n$; $a_{n+1},\ldots,a_p$; $b_1,\ldots,b_m$; and $b_{m+1},\ldots,b_q$.  Thus, if one the $a_j$'s, $j=n+1,\ldots,p$, is equal to one of the $b_k$'s, $k=1,\ldots,m$, the Meijer $G$-function reduces to one of lower order.  For example,
\begin{equation}\label{lukeformula}G_{p,q}^{m,n}\bigg(z \; \bigg| \;{a_1,\ldots,a_{p-1},b_1 \atop b_1\ldots,b_q}\bigg)=G_{p-1,q-1}^{m-1,n}\bigg(z \; \bigg| \;{a_1,\ldots,a_{p-1} \atop b_2\ldots,b_q}\bigg), \quad m,p,q\geq 1.
\end{equation}
The Meijer $G$-function satisfies the identities
\begin{equation}\label{meijergidentity}z^cG_{p,q}^{m,n}\bigg(z \; \bigg| \;{a_1,\ldots,a_p \atop b_1\ldots,b_q}\bigg)=G_{p,q}^{m,n}\bigg(z \; \bigg| \;{a_1+c,\ldots,a_p+c \atop b_1+c\ldots,b_q+c}\bigg), \quad z\in\mathbb{R}
\end{equation}
and
\begin{equation}\label{meijerg-1}G_{p,q}^{m,n}\bigg(z \; \bigg| \;{a_1,\ldots,a_p \atop b_1\ldots,b_q}\bigg)=G_{p,q}^{m,n}\bigg(\frac{1}{z} \; \bigg| \;{1-b_1,\ldots,1-b_q \atop 1-a_1\ldots,1-a_p}\bigg),\quad z\in\mathbb{R}.
\end{equation}

\subsubsection{Integration}
For $\alpha>0$, $\gamma>0$, $a_j<1$ for $j=1,\ldots,n$, and $b_j>-\frac{1}{2}$ for $j=1,\ldots,m$, we have
\begin{equation}\label{meijergintegration}\int_0^{\infty}\cos(\gamma x)G_{p,q}^{m,n}\bigg(\alpha x^2 \; \bigg| \;{a_1,\ldots,a_p \atop b_1\ldots,b_q}\bigg)\,\mathrm{d}x=\sqrt{\pi}\gamma^{-1}G_{p+2,q}^{m,n+1}\bigg(\frac{4\alpha}{\gamma^2} \; \bigg| \;{\frac{1}{2},a_1,\ldots,a_p,0 \atop b_1\ldots,b_q}\bigg).
\end{equation}

\subsubsection{Differential equation}
The Meijer $G$-function satisfies a differential equation of order $\max(p,q)$:
\begin{equation}\label{meijergdiff}\bigg[(-1)^{p-m-n}z\prod_{j=1}^p\bigg(z\frac{\mathrm{d}}{\mathrm{d}z}-a_j+1\bigg)-\prod_{j=1}^q\bigg(z\frac{\mathrm{d}}{\mathrm{d}z}-b_j\bigg)\bigg]G(z)=0.
\end{equation}

\subsection{Modified Bessel functions}

\subsubsection{Definitions}
The \emph{modified Bessel function of the first kind} of order $\nu \in \mathbb{R}$ is defined, for all $x\in\mathbb{R}$, by
\begin{equation*}\label{defI}I_{\nu} (x) = \sum_{k=0}^{\infty} \frac{1}{\Gamma(\nu +k+1) k!} \left( \frac{x}{2} \right)^{\nu +2k}.
\end{equation*}

The \emph{modified Bessel function of the second kind} of order $\nu \in \mathbb{R}$ can be defined in terms of the modified Bessel function of the first kind as follows
\begin{eqnarray*}K_{\nu} (x) &=& \frac{\pi}{2 \sin (\nu \pi)} (I_{-\nu}(x) - I_{\nu} (x)), \quad \nu \not= \mathbb{Z}, \: x \in\mathbb{R}, \\
K_{\nu} (x) &=& \lim_{\mu\to\nu} K_{\mu} (x) = \lim_{\mu\to\nu} \frac{\pi}{2 \sin (\mu \pi)} (I_{-\mu}(x) - I_{\mu} (x)), \quad \nu \in \mathbb{Z}, \: x \in\mathbb{R}.
\end{eqnarray*}

\subsubsection{Basic properties}
For $\nu\in\mathbb{R}$, the modified Bessel function of the first kind $I_{\nu}(x)$ and the modified Bessel function of the second kind $K_{\nu}(x)$ are regular functions of $x$.  For $n\in\mathbb{Z}$, $I_{2n}(x)$ is a real-valued function for all $x\in\mathbb{R}$, with $I_{2n}(-x)=I_{2n}(x)$. The modified Bessel function $I_{2n+1}(x)$ is a real-valued for all $x\in\mathbb{R}$, with $I_{2n+1}(-x)=-I_{2n+1}(x)$.  For $\nu\geq 0$ and $x>0$ we have $I_{\nu}(x)>0$ and $K_{\nu}(x)>0$.  For all $\nu\in\mathbb{R}$, the modified Bessel function $K_{\nu}(x)$ is complex-valued in the region $x<0$.

\subsubsection{Representation in terms of the Meijer $G$-function}
\begin{eqnarray}\label{iomei}I_0(x)&=& G_{0,2}^{1,0}\bigg(-\frac{x^2}{4}\,\bigg|\,0,0\bigg), \quad x\in\mathbb{R}, \\
\label{komei}K_0(|x|) &=&\frac{1}{2}G_{0,2}^{2,0}\bigg( \frac{x^2}{4}\,\bigg|\,0,0\bigg),\quad x\in\mathbb{R}.
\end{eqnarray}

\subsubsection{Asymptotic expansions}
\begin{eqnarray}
\label{Ktend0}K_{0} (x) &\sim& -\log x,  \quad x \downarrow 0, \\
\label{Ktendinfinity} K_{\nu} (x) &\sim& \sqrt{\frac{\pi}{2x}} \mathrm{e}^{-x}, \quad x \rightarrow \infty, \\
\label{roots} I_{\nu} (x) &\sim& \frac{\mathrm{e}^x}{\sqrt{2\pi x}}, \quad x \rightarrow \infty.
\end{eqnarray}

\subsubsection{Differentiation}
\begin{eqnarray}\label{diffIii} I_0' (x) &=& I_1(x), \\
\label{diffKii} K_0'(x) &=& -K_1(x), \\
I_0''(x)\label{ponmu}&=&I_0(x)-\frac{I_{1}(x)}{x}, \\
\label{thmcor}K_0''(x)&=&K_0(x)+\frac{K_{1}(x)}{x}, \\
I_0^{(3)}(x) \label{mjyt}&=&-I_0(x)+\bigg(1+\frac{2}{x^2}\bigg)I_{1}(x), \\
\label{111213}K_0^{(3)}(x)&=&-K_0(x)-\bigg(1+\frac{2}{x^2}\bigg)K_{1}(x).
\end{eqnarray}

\subsubsection{Differential equation}
The modified Bessel differential equation is 
\begin{equation} \label{realfeel} x^2 f''(x) + xf'(x) - (x^2 +\nu^2)f(x) =0.\end{equation}
The general solution is $f(x)=AI_{\nu} (x) +BK_{\nu} (x).$

\section{Bounds for expressions involving derivatives and integrals of modified Bessel functions}
The following bounds, which can be found in Gaunt \cite{gaunt confirmation}, \cite{gaunt}, are used to bound the derivatives of the solution to the $\mathrm{\mathrm{PN}}(2,\sigma^2)$ Stein equation (\ref{deltappp}).  For $x\geq 0$,
\begin{eqnarray}
 \left|I_{(0,0,n)}(x)  K_0^{(n)}(x)\right| &\leq& 2^{n-1}, \qquad n=1,2,3,\ldots, \nonumber  \\
 \left|xI_{(0,0,n)}(x)  K_0^{(n)}(x)\right| &\leq& 2^{n-1}, \qquad n=1,2,3,\ldots, \nonumber  \\
 \left|I_{(0,0,1)}(x)  K_0(x)\right| &<& 1, \nonumber \\
 \left|xI_{(0,0,1)}(x)  K_0(x)\right| &<& 1, \nonumber \\
\bigg|I_0(x)\int_x^{\infty}K_0(y)\,\mathrm{d}y\bigg|&\leq& \frac{\pi}{2}, \nonumber \\
\bigg|xI_0(x)\int_x^{\infty}K_0(y)\,\mathrm{d}y\bigg|&<& 0.615, \nonumber \\
\bigg|I_0^{(2n)}(x)\int_x^{\infty}K_0(y)\,\mathrm{d}y\bigg| &<& \frac{1}{4}+\frac{\sqrt{\pi}}{2}, \qquad n=0,1,2,\ldots, \nonumber \\
\bigg|I_0^{(2n+1)}(x)\int_x^{\infty}K_0(y)\,\mathrm{d}y\bigg| &<& \frac{1}{2}, \qquad n=0,1,2,\ldots, \nonumber \\
\bigg|xI_0'(x)\int_x^{\infty}K_0(y)\,\mathrm{d}y\bigg|&\leq& \frac{1}{2}, \nonumber  \\
\bigg|\frac{1}{x}-I_{(0,0,1)}(x)K_0''(x)\bigg|&<& 3, \nonumber \\
\bigg|\frac{1}{x}+I_{(0,0,2)}(x)K_0^{(3)}(x)\bigg|&<& 5,  \nonumber \\
\bigg|\frac{1}{x}-I_{(0,0,3)}(x)K_0^{(4)}(x)\bigg|&<& 9, \nonumber   \\
\bigg|\frac{2}{x^2}+I_{(0,0,1)}(x)K_0^{(3)}(x)\bigg|&<& 4.39, \nonumber \\
\bigg|\frac{3}{x^2}-I_{(0,0,2)}(x)K_0^{(4)}(x)\bigg|&<& 6.81, \nonumber \\
\bigg|\frac{6}{x^3}+\frac{1}{x}-I_{(0,0,1)}(x)K_0^{(4)}(x)\bigg|&<&14.61. \nonumber 
\end{eqnarray}

\section*{Acknowledgements}

During the course of this research the author was supported by an EPSRC DPhil Studentship and an EPSRC Doctoral Prize, and is currently supported by EPSRC grant EP/K032402/1.  The author would like to thank Gesine Reinert for the valuable guidance she provided on this project.  The author would also like to thank Larry Goldstein for a helpful discussion, which lead to section of this paper concerning the connection between the zero bias transformation of order $n$ and the square bias transformation.  Finally, the author would like to thank the anonymous referees and an associate editor for some highly constructive comments and suggestions. The author is particularly grateful to one referee, who gave some interesting insights regarding the zero bias transformation of order $n$, which lead to a substantial improvement in the presentation of this paper.

\end{document}